\newcommand{\af}{\mathfrak a}
\newcommand{\bfr}{\mathfrak b}
\newcommand{\cf}{\mathfrak c}
\newcommand{\pf}{\mathfrak p}
\newcommand{\qf}{\mathfrak q}
\newcommand{\mf}{\mathfrak m}
\newcommand{\nf}{\mathfrak n}
\newcommand{\Jf}{\mathfrak{J}}
\newcommand{\tb}{\mathbf{t}}
\newcommand{\ub}{\mathbf{u}}
\newcommand{\ab}{\mathbf{a}}
\newcommand{\bb}{\mathbf{b}}
\newcommand{\cb}{\mathbf{c}}
\newcommand{\hb}{\mathbf{h}}
\newcommand{\eb}{\mathbf{e}}
\newcommand{\C}{\mathbb C}
\newcommand{\F}{\mathbb F}
\newcommand{\N}{\mathbb N}
\newcommand{\Q}{\mathbb Q}
\newcommand{\R}{\mathbb R}
\newcommand{\Z}{\mathbb Z}
\renewcommand{\P}{\mathbb P}
\DeclareMathOperator{\fpt}{fpt}
\DeclareMathOperator{\ft}{ft}
\DeclareMathOperator{\lct}{lct}
\DeclareMathOperator{\Spec}{Spec}
\DeclareMathOperator{\Hom}{Hom}
\DeclareMathOperator{\ini}{in}
\DeclareMathOperator{\Tor}{Tor}
\DeclareMathOperator{\Ass}{Ass}
\DeclareMathOperator{\Sing}{Sing}
\DeclareMathOperator{\codim}{ht}
\DeclareMathOperator{\ord}{ord}
\newcommand{\ceil}[1]{\lceil #1\rceil}
\newcommand{\norm}[1]{\left\lVert#1\right\rVert}
\renewcommand{\char}{\textup{char }}
\renewcommand{\bar}{\overline}
\theoremstyle{plain}
\newtheorem{theorem}{Theorem}[section]
\newtheorem{prop}[theorem]{Proposition}
\newtheorem{lemma}[theorem]{Lemma}
\newtheorem{cor}[theorem]{Corollary}
\newtheorem{question}[theorem]{Question}
\newtheorem{notation}[theorem]{Notation}
\newtheorem*{theorem*}{Theorem}
\newtheorem*{mainthm}{Main Theorem}
\newtheorem*{prop*}{Proposition}
\theoremstyle{definition}
\newtheorem{defn}[theorem]{Definition}
\newtheorem{example}[theorem]{Example}
\newtheorem{remark}[theorem]{Remark}
\newtheorem{defn-prop}[theorem]{Definition-Proposition}
\crefname{prop}{proposition}{propositions}
\crefname{defn-prop}{definition-proposition}{definition-propositions}
\Crefname{defn-prop}{Definition-Proposition}{Definition-Propositions}
\Crefname{prop}{Proposition}{Propositions}
\crefname{defn}{definition}{definitions}
\Crefname{defn}{Definition}{Definitions}
\crefname{conj}{conjecture}{conjectures}
\Crefname{conj}{Conjecture}{Conjectures}
\begin{document}

\title{Extremal F-Thresholds in Regular Local Rings}

\begin{abstract}
    Let $(R, \mf)$ be a regular local ring of characteristic $p > 0$.
    Among all proper ideals $\af\subseteq R$ with a fixed order of vanishing $\ord_{\mf}(\af)$, we classify the ideals for which the $F$-threshold $\ft^{\mf}(\af)$ is minimal. 
\end{abstract} 
\author{Benjamin Baily}
\thanks{The author was supported by NSF grant DMS-2101075, NSF RTG grant DMS-1840234, and the Simons Dissertation Fellowship.}
\maketitle
\section{Introduction}

Let $(R, \mf)$ be a regular local ring of characteristic $p>0$ and let $\af$ denote a proper ideal of $R$. Assume that $R$ is $F$-finite -- that is, we assume that the Frobenius map $F:R\to R$ is finite -- we will relax this assumption later. The $F$-pure threshold of $(R, \af)$ is a nonnegative rational number \cite{blickle_discreteness_2008} measuring the singularities of the pair $(R, \af)$ at the point $\mf$ in $\Spec R$. In this setting, the $F$-pure threshold can be computed as
\[
\fpt(\af) = \sup \left\{\frac{t}{p^e}: \af^t\not\subseteq \mf^{[p^e]}\right\}, \text{ where }I^{[p^e]} := \sum_{z\in I} z^{p^e}R.
\]
The idea is that smaller values of the $F$-pure threshold correspond to ``worse singularities'' of the closed subscheme of $\Spec R$ defined by $\af$. For example, if $\af = fR$ is principal, then $\fpt(f) \leq 1$ with equality if and only $R/fR$ is Frobenius split; in this case, $R/fR$ must be reduced.

If $d = \ord_{\mf}(\af)$ denotes the greatest positive integer such that $\af\subseteq \mf^{d}$ and $n$ is the dimension of $R$, then $\fpt(\af)$ is bounded below by $\frac{1}{d}$ and bounded above by $\min(\frac{n}{d}, 1)$; see \cite{takagi_f-pure_2004}. In this paper, we describe when the $F$-pure threshold achieves the lower end of this range. 
\begin{mainthm}[\Cref{thm:E1-char-p}]
    Let $(R, \mf)$ be an $F$-finite regular local ring of characteristic $p > 0$ and $\af\subseteq R$ a proper ideal. Let $d = \ord_\mf(\af)$ and write $d = qs$ where $q$ is a power of $p$ and $s$ is coprime to $p$. Then $\fpt(\af) = \frac{1}{d}$ if and only if $\af$ is a principal ideal and there exists $g$ in $\mf^{[q]}$ such that $\af = g^sR$.
\end{mainthm}

For a more general statement and a proof, see Section \ref{sec:proof}. In particular, we show that the $F$-finite hypothesis can be weakened to the assumption that the formal fiber $\widehat{R}\otimes_R \dfrac{R_{(\pi)}}{\pi R_{(\pi)}}$ is reduced for all prime elements $\pi$ in $R$, which is satisfied by any excellent local ring. The hypothesis on the height-1 formal fibers of $R$ cannot be weakened further; see \Cref{example:formal-fiber-ctrexmpl}.

Other authors have considered a related problem for squarefree homogeneous polynomials over an algebraically closed field $k$ of characteristic $p > 0$; see \cite{kadyrsizova_lower_2022}. For $f$ in $k[x_0,\dots, x_n]$ squarefree and homogeneous of degree $d$, one has $\fpt(f) \geq \frac{1}{d-1}$ with equality if and only if $d = p^e + 1$ and $f\in \mf^{[p^e]}$. In the absence of homogeneity, however, we cannot improve on the lower bound $\fpt(f)\geq \frac{1}{d}$ even if we assume that $f$ is squarefree: the binomial $x_1^d + x_2^N$ is reduced whenever $d,N$ are coprime and $\fpt(x_1^d + x_2^N)$ converges to $\frac{1}{d}$ as $N\to \infty$ (\Cref{example:E1-char-p-reduced}).

The characteristic zero analog of the main theorem concerning the log canonical threshold (lct) is well-understood. If $(R, \mf)$ is an excellent regular local ring of equal characteristic zero and $\af\subseteq R$ an ideal, it is known to experts that $\lct(\af)\geq \frac{1}{\ord_{\mf}(\af)}$ with equality if and only if $\af =x^dR$ for some $d$ in $\Z^+$ and some $x$ in $\mf\setminus \mf^2$. More generally, for the germ at $0$ of a plurisubharmonic function $u:\C^n\to \C$ with Lelong number $\nu(u)=1$, one has $\lct(u) \geq 1$ with equality if and only if $u = \log |z_1| + v$ where $z_1$ is a local coordinate and $\nu(v) = 0$; see \cite{guan_lelong_2015}.

A straightforward consequence of the main theorem is that the lower bound $\frac{1}{\ord_{\mf}(\af)}\leq \fpt(\af)$ is only attained by height-1 ideals. In a forthcoming paper \cite{baily_classification_2026}, the author proves a stronger bound for higher-height ideals, analogous to a result of Demailly and Pham on log canonical thresholds \cite{demailly_sharp_2014}. Specifically, for an $F$-finite regular local ring $(R, \mf)$ of characteristic $p > 0$ and an ideal $\af\subseteq R$ of height $l$, there is a log convex sequence of positive integers $\sigma_0(\af),\dots, \sigma_l(\af)$ with $\sigma_0(\af) = 1, \sigma_1(\af) = \ord_{\mf}(\af)$, and
\begin{equation}\label{eqn:sigma-lowerbound}
\frac{\sigma_0(\af)}{\sigma_1(\af)} + \dots + \frac{\sigma_{l-1}(\af)}{\sigma_l(\af)} \leq \fpt(\af).
\end{equation}
The integers $\sigma_j(\af)$ are constructed in \cite{bivia-ausina_joint_2008}, where they are written as $\sigma_j(\af, \mf)$.
The main result of \cite{baily_classification_2026} is a description of the cases of equality in \Cref{eqn:sigma-lowerbound} when $R$ is a polynomial ring and $\af\subseteq R$ is a homogeneous ideal.

\begin{notation}
    In this paper, all rings are commutative and Noetherian. The letter $p$ always denotes a prime number and $q$ a power of $p$.
\end{notation}

\section{Preliminaries}
We consider with monomial orders and leading terms in power series rings. With a few exceptions, Gr\"obner theory in power series rings is analogous to the theory in polynomial rings. For the unfamiliar reader, we recommend \cite[Chapter 4] {cox_usingAG_2005}.
\begin{notation}
    For a vector of ring elements $\mathbf{f} = f_1,\dots, f_r$ and a vector of nonnegative integers $\mathbf{a} = a_1,\dots, a_r$, we let $\mathbf{f}^{\ab}$ denote the element $f_1^{a_1}\dots f_r^{a_r}$. If $\ab$ is instead a tuple of nonnegative real numbers, we let $\mathbf{f}^{\ab}$ denote the $\R$-divisor $a_1\text{div}(f_1) + \dots + a_r\text{div}(f_r)$. 
\end{notation}
\begin{defn}
    Let $k$ be a field. Let $R = k\llbracket x_1,\dots, x_n\rrbracket$. A \textit{local monomial order} on $R$ is a partial ordering $>$ on the set of monomials of $R$ such that for all $\ab, \bb, \cb$ in $\Z_{\geq 0}^n$, if $\mathbf{x}^{\ab} \leq \mathbf{x}^{\bb}$, then 
        \[
        \mathbf{x}^{\ab + \cb} \leq \mathbf{x}^{\bb + \cb} \leq \mathbf{x}^{\bb}.
        \]
    In particular, the greatest monomial in $R$ is $1$. 

    Let $f = \sum_{\ab\in \Z_{\geq 0}^r} \gamma_{\ab}\mathbf{x}^{\ab}$ be an element of $R$. Among all $\bb\in \Z^n_{\geq 0}$ such that $\gamma_\bb\neq 0$, let $S$ denote the set of vectors for which $\mathbf{x}^{\bb}$ is maximal with respect to $>$. The \textit{initial term} of $f$ with respect to $>$, denoted $\ini_>(f)$, is the element $\sum_{\bb\in S}\gamma_{\bb}\mathbf{x}^{\bb}$. 
\end{defn}
\subsection{Background on F-Thresholds and Test Ideals}
To begin, we define $F$-thresholds and test ideals and collect a few relevant properties. For further background, we refer the reader to \cite{mustata_f-thresholds_2004,takagi_f-pure_2004}. 

\begin{defn-prop}[\cite{mustata_f-thresholds_2004}]\label{defn-prop:f-threshold}
    Let $R$ be a regular local ring of characteristic $p > 0$. Let $\af\subseteq R$ be an ideal and $\bfr\subseteq R$ a proper ideal such that $\af\subseteq \sqrt{\bfr}$. For $e\geq 0$, define $\nu_{\af}^{\bfr}(p^e)$ to be the greatest integer $t$ such that $\af^t\not\subseteq \bfr^{[p^e]}$. Then the sequence $\dfrac{\nu_{\af}^{\bfr}(p^e)}{p^e}$ has a limit as $e\to \infty$, and we refer to this limit as the $F$-threshold $\ft^{\bfr}(\af)$. Additionally, we have
    \[
    \ft^{\bfr}(f) = \inf\left\{\frac{t}{p^e}:\af^t\subseteq \bfr^{[p^e]}\right\} = \sup\left\{\frac{t}{p^e}:\af^t\not\subseteq\bfr^{[p^e]}\right\}.
    \]
\end{defn-prop}
For the majority of this paper, the above notion of the F-threshold suffices. In a few isolated instances, we need a more general notion of what it means for a pair $(R, f^t)$ to be $F$-pure. 
\begin{defn}[\cite{schwede_singularities_2024}, \S 1.1]
    Let $R$ be a ring of characteristic $p > 0$. For any $e > 0$, the module $F^e_*R$ has underlying Abelian group isomorphic to $R$ and an $R$-module action defined by restriction of scalars along the Frobenius map $F^e: R\to R$. Concretely, the elements of $F^e_*R$ are $\{F^e_*f: f\in R\}$, and $xF^e_*f = F_*^e (x^{p^e}f)$. We say that $R$ is $F$-finite if $F_*R$ is a finite $R$-module, in which case $F^e_*R$ is finite for all $e > 0$.
\end{defn}
\begin{defn}[\cite{schwede_centers_2010,takagi_f-pure_2004}]\label{defn:general-f-pairs}
    Let $R$ be an $F$-finite reduced ring of characteristic $p > 0$. Let $\af\subseteq R$ be an ideal and $t$ a nonnegative real number. We say that the pair $(R, \af^t)$ is \textit{sharply $F$-pure} if, for a single $e>0$ (equivalently, all integers $ne$ where $n\in \Z^+$), there exists $d\in \af^{\ceil{t(p^e-1)}}$ such that the map
        \[
    R\to F^e_*R \qquad 1\mapsto F^e_*d
        \]
        splits. 
        
        The $F$-pure threshold $\fpt(\af)$ of the pair $(R, \af)$ is defined to be the supremum of all real $t \geq 0$ such that $(R, \af^t)$ is sharply $F$-pure.
\end{defn}
By \cite[Remark 1.5]{mustata_f-thresholds_2004}, if $(R, \mf)$ is a regular local ring then $\fpt(\af) = \ft^{\mf}(\af)$.
\begin{defn}[\cite{blickle_discreteness_2008,hara_generalization_2003}]\label{defn:test-ideal}
    Let $R$ be an $F$-finite regular ring of characteristic $p>0$, let $\af\subseteq R$ be an ideal, and let $t$ be a nonnegative real number. The \textit{test ideal} of the pair $(R, \af^t)$ is equal to the image, for $e\gg 0$, of the map
    \[
    \af^{\ceil{tp^e}}\cdot \Hom_R(F^e_*R, R)\to R \qquad d\cdot \phi\mapsto \phi(F^e_*d).
    \]
    We denote the test ideal of $(R, \af^t)$ by $\tau(R, \af^t)$ or by $\tau(\af^t)$ if the ambient ring is clear from the context.
\end{defn}
\begin{prop}\label{prop:properties-of-ft}
    Let $(R, \mf)$ be a regular  local ring of dimension $n$ and characteristic $p > 0$. Let $\af$ be an ideal of $R$ and let $\bfr\subseteq R$ such that $\af\subseteq \sqrt{\bfr}$. Then the following hold.
    \begin{enumerate} 
        \item If $\cf\subseteq \af$, then $\ft^{\bfr}(\cf)\leq \ft^{\bfr}(\af)$.
        \item For $s$ in $\Z^+$, we have $\ft^\bfr(\af^s) = \frac{1}{s}\ft^\bfr(\af)$.
        \item The inequalities $\dfrac{1}{\ord_\mf(f)}\leq \ft^{\mf}(f)\leq \dfrac{n}{\ord_\mf(f)}$ hold.
        \item For all $e > 0$, we have $\dfrac{\nu_{\af}^{\bfr}(p^e)}{p^e} < \ft^{\bfr}(\af)$.
        \item For $x$ in $\mf\setminus \mf^2$, if $\bar \af, \bar \bfr$ denote the images of $\af, \bfr$ in $R/xR$, then $\ft^\bfr(\af)\geq \ft^{\bar \bfr}(\bar \af)$.
        \item We have $\ft^{\bfr}(\af) = \inf \{t: \tau(\af^t)\subseteq \bfr\}$.
        \item If $R$ is a power series ring, $\bfr$ is a monomial ideal, and $>$ is a local monomial order, then $\ft^\bfr(\ini_>(\af)) \leq \ft^\bfr(\af)$.
    \end{enumerate}
\end{prop}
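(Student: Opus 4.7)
My plan is to dispatch parts (1)--(3), (5), and (7) directly from the definition of $\nu_\af^\bfr(p^e)$, deduce (6) from the structure of Cartier operators on a regular $F$-finite ring, and use Krull's intersection theorem to handle (4), which I expect to be the main difficulty.

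Parts (1) and (2) follow from the observations $\cf^t\subseteq \af^t$ (when $\cf\subseteq \af$) and $(\af^s)^t = \af^{st}$, giving $\nu_\cf^\bfr(p^e)\leq \nu_\af^\bfr(p^e)$ and $\nu_{\af^s}^\bfr(p^e) = \floor{\nu_\af^\bfr(p^e)/s}$; these yield the desired bounds after dividing by $p^e$ and letting $e\to\infty$. For (5), passage to Frobenius powers commutes with reduction modulo $x$, so $\bar\af^t\subseteq \bar\bfr^{[p^e]}$ is equivalent to $\af^t\subseteq \bfr^{[p^e]} + xR$, which is implied by $\af^t\subseteq \bfr^{[p^e]}$; the contrapositive gives the claim. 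In (3), the lower bound comes from choosing $f\in \af$ with $\ord_\mf(f) = d$ and noting that $f^t \notin \mf^{p^e} \supseteq \mf^{[p^e]}$ whenever $dt < p^e$; the upper bound uses the pigeonhole inclusion $\mf^{n(p^e - 1) + 1}\subseteq \mf^{[p^e]}$ combined with $\af^t\subseteq \mf^{dt}$. For (7), the containment $\af^t\subseteq \bfr^{[p^e]}$ is equivalent to $\ini_>(\af^t)\subseteq \bfr^{[p^e]}$ since $\bfr^{[p^e]}$ is monomial, which together with $\ini_>(\af)^t\subseteq \ini_>(\af^t)$ yields $\nu_{\ini_>(\af)}^\bfr(p^e)\leq \nu_\af^\bfr(p^e)$.

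For (6), I will use that in an $F$-finite regular ring, $\Hom_R(F^e_*R, R)$ is a free $F^e_*R$-module of rank one; fixing a generator $\phi_e$, the preimage of any ideal $\bfr\subseteq R$ under $\phi_e$ is precisely $F^e_*\bfr^{[p^e]}$. Hence, for $e\gg 0$, the containment $\tau(\af^t)\subseteq \bfr$ is equivalent to $\af^{\ceil{tp^e}}\subseteq \bfr^{[p^e]}$, and taking the infimum over $t$ recovers $\ft^\bfr(\af)$.

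The main obstacle is (4). Assume $\af\neq 0$, and suppose for contradiction that $\nu_\af^\bfr(p^e)/p^e = \ft^\bfr(\af)$ for some $e$. Flatness of Frobenius in the regular ring $R$ yields the colon identity $(\bfr^{[p^{e+k}]}:g^{p^k}) = (\bfr^{[p^e]}:g)^{[p^k]}$ for any $g\in R$; in particular, $g\notin \bfr^{[p^e]}$ implies $g^{p^k}\notin \bfr^{[p^{e+k}]}$. Applied to elements of $\af^{\nu_\af^\bfr(p^e)}\setminus \bfr^{[p^e]}$, this shows $\nu_\af^\bfr(p^{e+k})\geq p^k\nu_\af^\bfr(p^e)$, so the sequence $\nu_\af^\bfr(p^{e+k})/p^{e+k}$ is non-decreasing with limit $\ft^\bfr(\af)$; the standing assumption then forces $\nu_\af^\bfr(p^{e+k}) = p^k\nu$ for all $k\geq 0$, where $\nu = \nu_\af^\bfr(p^e)$. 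Picking $f\in \af^\nu \setminus \bfr^{[p^e]}$, the inclusion $\af\cdot f^{p^k}\subseteq \af^{p^k\nu + 1}\subseteq \bfr^{[p^{e+k}]}$ combined with the colon identity gives $\af\subseteq (\bfr^{[p^e]}:f)^{[p^k]}$; since $(\bfr^{[p^e]}:f)\subseteq \mf$ is a proper ideal, we obtain $\af\subseteq \mf^{[p^k]}\subseteq \mf^{p^k}$ for every $k$, and Krull's intersection theorem forces $\af = 0$, a contradiction.
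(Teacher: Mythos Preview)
Your arguments are substantially correct and far more self-contained than the paper's own proof, which dispatches everything by citation: (1)--(4) to \cite[Proposition~1.7]{mustata_f-thresholds_2004}, (5) to an observation from \cite[Theorem~3.11]{takagi_f-singularities_2004}, (6) to \cite[Proposition~2.7]{mustata_f-thresholds_2004}, and (7) to the argument inside \cite[Proposition~4.5]{takagi_f-pure_2004}. Your direct treatments of (1)--(3), (5), (7) and your Krull-intersection argument for the strict inequality in (4) are essentially the arguments one finds in those sources, so the difference is one of exposition rather than strategy.

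Two statements need repair, though neither breaks the proof. In (7), the claimed \emph{equivalence} ``$\af^t\subseteq \bfr^{[p^e]} \iff \ini_>(\af^t)\subseteq \bfr^{[p^e]}$'' is false: taking $\af^t=(x+y)$ and $\bfr^{[p^e]}=(x)$ under any order with $\ini_>(x+y)=x$, the initial ideal lies in $(x)$ while $x+y\notin(x)$. Only the forward implication holds for monomial $\bfr^{[p^e]}$, and that is all your chain of inclusions uses. In (6), the assertion that $\phi_e^{-1}(\bfr)=F^e_*\bfr^{[p^e]}$ is incorrect as a set-theoretic preimage: for $R=k\llbracket x\rrbracket$ with $\phi_1$ the projection onto the $F_*x^{p-1}$-coordinate, one has $\phi_1(F_*1)=0\in\bfr$ for every proper $\bfr$, yet $1\notin\bfr^{[p]}$. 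What is true, and what suffices, is the ideal-level adjunction $\phi_e(F^e_*I)\subseteq\bfr\iff I\subseteq\bfr^{[p^e]}$: since every $\psi\in\Hom_R(F^e_*R,R)$ is premultiplication by some $F^e_*r$ followed by $\phi_e$, the hypothesis $\phi_e(F^e_*I)\subseteq\bfr$ gives $\psi(F^e_*I)\subseteq\bfr$ for all $\psi$, and then expanding in a free basis of $F^e_*R$ with its dual basis forces $F^e_*I\subseteq\bfr\cdot F^e_*R=F^e_*\bfr^{[p^e]}$. With this correction your conclusion for (6) stands.
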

\begin{proof} For (1)-(4), see \cite[Proposition 1.7]{mustata_f-thresholds_2004}. The proof of claims (5)-(7) are enumerated below.
    \begin{enumerate}
        \setcounter{enumi}{4}
        \item This standard fact follows from an observation in \cite[Theorem 3.11]{takagi_f-singularities_2004}: for any positive integers $t,e$, if $\af^t\subseteq \bfr^{[p^e]}$ then $(\bar \af)^{t} \subseteq (\bar \bfr)^{[p^e]}$. The assumption that $x\in \mf\setminus \mf^2$ is to ensure that the quotient ring $R/xR$ is again regular, but is not necessary if one defines the $F$-threshold in greater generality.
        \item This fact follows from \cite[Proposition 2.7]{mustata_f-thresholds_2004}.
        \item The argument of the claim in \cite[Proof of Proposition 4.5]{takagi_f-pure_2004} works verbatim.
    \end{enumerate}
\end{proof}

\subsection{Critical Points, after Hern\'andez and Teixeira}
\begin{notation}
    For a vector $\ab = (a_1,\dots, a_n)$ in $\R^n_{\geq 0}$, the symbol $\norm{\ab}$ denotes the number $a_1 + \dots + a_n$.
\end{notation}
A key point in the proof of \Cref{thm:E1-char-p} uses ideas of Hern\'andez and Teixera \cite{hernandez_syzygy_2017} in the 2-dimensional setting, generalized to fit our needs. In op. cit., the authors consider the following problem.
\begin{question}\label{question:linear_fractal}
    Let $k$ be a field of characteristic $p > 0$ and $\hb =h_1,\dots, h_r$ be a tuple of distinct linear forms in $R =k[x, y]$. For which $\tb$ in $\R_{\geq 0}^r$ is $\tau(R, \hb^{\tb}) = R$? More generally, for a homogeneous system of parameters $U, V$ and $\bfr = (U, V)$, how can we describe the set of $\tb$ in $\R_{\geq 0}^r$ for which $\tau(R, \hb^{\tb})\subseteq \bfr$?
\end{question}
To address this question, the authors study \textit{syzygy gaps.} 
For $\ab$ in $\Z_{\geq 0}^r$ and $q = p^e$, one can determine whether $\tau(R, \hb^{\ab/q})\subseteq \bfr$ by computing the graded free resolution 
\[
R(-m)\oplus R(-n)\to R(-q\deg U)\oplus R(-q\deg V)\oplus R(-\norm{\ab})\to R/(U,V,\hb^{\ab})\to 0.
\]
The number $\Delta(\ab)$ is defined to be the ``syzygy gap'' $|m-n|$. Defining $\Delta(\ab/q)$ to be $\frac{1}{q}\Delta(\ab)$ yields a well-defined map $\Delta: \left(\Z[\frac{1}{p}]_{\geq 0}\right)^r\to \Z[\frac{1}{p}]_{\geq 0}$. By Lemma 3.2 and Proposition 3.5 in op. cit., $\Delta$ extends uniquely to a continuous map from $\R_{\geq 0}^r$ to $ \R_{\geq 0}$ and determines the behavior of the test ideal: for $\tb$ in $\R_{\geq 0}^r$, we have $\tau(R, \hb^{\tb})\subseteq \bfr$ if and only if $\Delta(\tb) = |\norm{\tb} - \deg(UV)|$. By Corollary 3.11 in op. cit., we have $\tau(R, \hb^{\tb})\subseteq \bfr$ whenever $\norm{\tb}\geq \deg(UV)$, so the nontrivial behavior of the test ideal is confined to the region $\norm{\tb} < \deg(UV)$. By Theorem 5.9 in op. cit., this nontrivial behavior is completely determined by a family of distinguished points in $\Z[\frac{1}{p}]_{\geq 0}$ that the authors refer to as \textit{critical points}.

The question we consider is similar. Rather than arbitrary parameter ideals, we consider special parameter ideals $\bfr$ of the form $(y, x^\ell)$ and rather than tuples of linear forms, we consider tuples of polynomials $\hb = h_1,\dots, h_r$, where $h_i = y - g_i$ and $g_i\in x^\ell k[x]$. 
\begin{question}\label{question:polynomial_fractal}
    Let $k$ be a field of characteristic $p > 0$ and $R = k\llbracket x,y\rrbracket$. Let $\ell > 0$ and let $\bfr:= (y, x^\ell)$. Let $r\geq 2$ and for $1\leq i\leq r$, let $h_i = y - g_i$, where $g_i\in x^\ell k[x]$. Let $\hb$ denote the vector $h_1,\dots, h_r$. For which $\tb$ in $\R_{\geq 0}^r$ is $\tau(R, \hb^{\tb})\subseteq \bfr$?
\end{question}
Analogously to \Cref{question:linear_fractal}, we have $\tau(R, \hb^{\tb})\not\subseteq \bfr$ when $\norm{\tb} < 1$ and $\tau(R, \hb^{\tb}) \subseteq \bfr$ when $\norm{\tb}\geq 2$, so the nontrivial behavior of $\tau(R,\hb^{\tb}) $ is contained in the strip $1\leq \norm{\tb} < 2$. Without access to an $\N$-grading, we cannot construct an analog of $\Delta$, but we prove that an analog of critical points do exist in the setting of \Cref{question:polynomial_fractal}.

If $D$ is the largest degree of any of the polynomials $g_i$, then the behavior of $\tau(R, \hb^{\tb})$ in the strip $1\leq \norm{\tb} < 1 + \frac{\ell}{D}$ is determined by critical points in $\left(\Z[\frac{1}{p}]_{\geq 0}\right)^r$ by \Cref{cor:critical-point-computes-ft}, whereas for $1 + \frac{\ell}{D} \leq \norm{\tb} < 2$, critical points do not always determine $\tau(R, \hb^{\tb})$; see \Cref{example:critical-points-insufficient}.
\begin{defn}\label{defn:regions}
     Consider the setup of \Cref{question:polynomial_fractal}. We define a partition of the set $\left(\Z[\frac{1}{p}]_{\geq 0}\right)^r$ into the \textit{upper and lower regions attached to $\hb$ and $\bfr$}. For $\ab$ in $ \Z_{\geq 0}^r, q = p^e$, we say that $\frac{\ab}{q}\in \mathscr U$ (the upper region) if $\hb^\ab = h_1^{a_1}\dots h_r^{a_r}\in \bfr^{[q]}$. Otherwise, $\frac{\ab}{q}\in \mathscr L$ (the lower region).
\end{defn}
Because $R$ is $F$-split, it follows that $\hb^\ab \in \bfr^{[q]}$ if and only if $\hb^{p\ab}\in \bfr^{[pq]}$, so the regions $\mathscr U,\mathscr L$ are well-defined. 
\begin{lemma}\label{lemma:U-pward-closed}
    For any $\frac{\ab}{q} \leq \frac{\ab'}{q'}$, if $\frac{\ab}{q}\in \mathscr U$ then $\frac{\ab'}{q'}\in\mathscr U$. 
\end{lemma}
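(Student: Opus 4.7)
The plan is to clear denominators using the $F$-split rescaling observation recorded just after \Cref{defn:regions}, after which the statement reduces to the fact that $\bfr^{[Q]}$ is an ideal. Concretely, set $Q = qq'$, and rewrite $\frac{\ab}{q} = \frac{q'\ab}{Q}$ and $\frac{\ab'}{q'} = \frac{q\ab'}{Q}$. The hypothesis $\frac{\ab}{q}\leq \frac{\ab'}{q'}$ becomes the componentwise inequality $q'\ab \leq q\ab'$ between vectors in $\Z_{\geq 0}^r$.

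Next I iterate the equivalence $\hb^{\bb}\in \bfr^{[q]} \Longleftrightarrow \hb^{p\bb}\in \bfr^{[pq]}$, which is the $F$-purity fact already invoked to make $\mathscr U$ well-defined, until both sides of the fraction are scaled by the same power of $p$. This yields $\hb^{\ab}\in \bfr^{[q]}$ if and only if $\hb^{q'\ab}\in \bfr^{[Q]}$, so the hypothesis $\frac{\ab}{q}\in\mathscr U$ is equivalent to $\hb^{q'\ab}\in \bfr^{[Q]}$.

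Since $q\ab' - q'\ab$ has nonnegative integer entries, I factor
\[
\hb^{q\ab'} = \hb^{q\ab' - q'\ab}\cdot \hb^{q'\ab},
\]
and the right-hand side lies in $\bfr^{[Q]}$ because $\bfr^{[Q]}$ is an ideal. Running the rescaling in reverse gives $\hb^{\ab'}\in \bfr^{[q']}$, i.e.\ $\frac{\ab'}{q'}\in \mathscr U$. There is no real obstacle here: the one subtlety is the $F$-purity rescaling that lets us put two arbitrary powers of $p$ over a common denominator, and this is already granted by the paragraph preceding the lemma.
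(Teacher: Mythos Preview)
Your proof is correct and follows essentially the same approach as the paper: clear denominators to a common power of $p$ using the $F$-split rescaling, then use that $\bfr^{[Q]}$ is an ideal (equivalently, that $\hb^{\bb}\mid \hb^{\cb}$ forces containment). The only cosmetic difference is that the paper takes $Q=\max(q,q')$ rather than $Q=qq'$, which works because $q,q'$ are both powers of $p$; either choice is fine.
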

\begin{proof}
    By well-definedness of $\mathscr U$, we may set $q'' = \max(q,q')$ and rewrite $\frac{\ab}{q} = \frac{\bb}{q''}, \frac{\ab'}{q'} = \frac{\cb}{q''}$. Suppose $\frac{\bb}{q''}\in \mathscr U$. As $\bb\leq \cb$, it follows that $\hb^\bb\mid \hb^\cb$, so $\hb^\cb\in \bfr^{[q'']}$ and thus $\frac{\cb}{q''}\in \mathscr U$.
\end{proof}
\begin{lemma}\label{lemma:e_i-in-U}
Suppose that $\ab\in \Z_{\geq 0}^r$ and that $q$ is a power of $p$. Let $e_1,\dots, e_r$ denote the standard unit vectors in $\Z^r$. For any $1\leq s\leq r$, we have $\frac{\ab}{q} - \frac{e_s}{q}\in \mathscr U$ if and only if $\frac{\ab}{q} - \frac{e_s}{pq}\in \mathscr U$.
\end{lemma}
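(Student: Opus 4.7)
The plan is to prove the two implications separately; one is immediate from upward closure, and the other reduces to a monomial computation after a change of variables. Since $\frac{e_s}{pq} \leq \frac{e_s}{q}$ componentwise, we have $\frac{\ab}{q} - \frac{e_s}{q} \leq \frac{\ab}{q} - \frac{e_s}{pq}$, so the forward implication follows immediately from \Cref{lemma:U-pward-closed}. The content of the lemma is the converse.

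For the converse, set $f = \hb^{\ab - e_s}$ and exploit the factorization $\hb^{p\ab - e_s} = f^p \cdot h_s^{p-1}$ (matching exponents: for $i \neq s$, $pa_i = p(\ab-e_s)_i$; for $i=s$, $pa_s - 1 = p(a_s - 1) + (p-1)$). This reduces the converse to the implication $f^p h_s^{p-1} \in \bfr^{[pq]} \Rightarrow f \in \bfr^{[q]}$. The approach is to eliminate $h_s$ by a linear change of variables. Let $\sigma$ be the $k$-algebra automorphism of $R = k\llbracket x, y\rrbracket$ determined by $x \mapsto x$ and $y \mapsto y + g_s$. Because $g_s \in x^\ell k[x] \subseteq \bfr$, we have $\sigma(\bfr) = \bfr$, so $\sigma$ preserves $\bfr^{[q]}$ and $\bfr^{[pq]}$; by construction $\sigma(h_s) = y$. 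Applying $\sigma$ throughout reduces the problem to the cleaner statement: for any $F \in R$, if $F^p y^{p-1} \in \bfr^{[pq]}$ then $F \in \bfr^{[q]}$.

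To prove this, I plan to decompose $F = F_0 + g$ uniquely with $g \in \bfr^{[q]}$ and $F_0$ in the $k$-span of the standard basis $\{x^i y^j : 0 \leq i < \ell q,\, 0 \leq j < q\}$ of $R/\bfr^{[q]}$. The characteristic-$p$ identity $(a+b)^p = a^p + b^p$ gives $F^p = F_0^p + g^p$ with $g^p \in \bfr^{[pq]}$, so the hypothesis becomes $F_0^p y^{p-1} \in \bfr^{[pq]}$. A direct degree count shows that $F_0^p y^{p-1}$ is supported on monomials $x^i y^j$ with $i \leq p(\ell q - 1) < p\ell q$ and $j \leq p(q-1) + (p-1) = pq - 1 < pq$; these all belong to the $k$-basis of $R/\bfr^{[pq]}$. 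Hence $F_0^p y^{p-1} \in \bfr^{[pq]}$ forces $F_0^p y^{p-1} = 0$ in $R$, and since $R$ is a domain and $y \neq 0$, we conclude $F_0 = 0$ and $F = g \in \bfr^{[q]}$.

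The only substantive step is the change of variables, which turns an asymmetric assertion involving $h_s$ into a symmetric monomial statement about $y^{p-1}$. After that, the argument is routine bookkeeping of degrees.
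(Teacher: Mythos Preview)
Your proof is correct, and it shares the paper's overall architecture: both establish the forward direction via upward closure (\Cref{lemma:U-pward-closed}), both apply the same change of variables to reduce to $h_s = y$, and both exploit the factorization $\hb^{p\ab - e_s} = (\hb^{\ab - e_s})^p \cdot y^{p-1}$. Where you diverge is in the final step. The paper computes the colon ideal $(\bfr^{[pq]} : y^{p-1}) = (y^{pq-p+1}, x^{\ell pq})$ via an external reference, then applies an explicit Frobenius splitting $\Phi: F_*R \to R$ (projection onto the $F_*(xy)^{p-1}$ component) to push $\hb^{p(\ab - e_s)}$ down to $\hb^{\ab - e_s} \in \bfr^{[q]}$. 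Your route is more elementary: you decompose $F = F_0 + g$ with $F_0$ a polynomial representative modulo $\bfr^{[q]}$, use the Freshman's dream to separate $F_0^p$ from $g^p$, and then a direct degree count shows every monomial of $F_0^p y^{p-1}$ lies in the standard basis of $R/\bfr^{[pq]}$, forcing $F_0 = 0$. Your argument is self-contained and sidesteps both the colon-ideal citation and the $F$-splitting machinery; the paper's approach, by contrast, slots into a toolkit (colon ideals of Frobenius powers, Cartier-type projections) that recurs elsewhere in the article, so its added overhead is amortized.
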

\begin{proof}
The implication $\Rightarrow$ follows from \Cref{lemma:U-pward-closed}.  For the reverse implication, start by considering the map $\phi: R\to R$ given by $\phi(x) = x$ and $\phi(y) = h_s$. Since $\phi$ is an isomorphism and $\phi(\bfr) = \bfr$, we may without loss of generality assume $h_s = y$. 

Suppose $\frac{\ab}{q} - \frac{e_s}{pq}\in \mathscr U$. Then $\hb^{p(\ab - e_s)}y^{p-1} = \hb^{p\ab - e_s} \in \bfr^{[pq]}$. Applying \cite[Lemma 3.5]{kadyrsizova_lower_2022}, we obtain
\[\hb^{p(\ab-e_s)} \in (\bfr^{[pq]}:y^{p-1}) = (y^{pq-p+1},x^{\ell pq})\]
The monomials $\{x^iy^j\}_{0\leq i,j \leq p-1}$ form a free basis for $F_*R$ over $R$; write $\Phi: F_*R\to R$ for projection onto the $F_*(xy)^{p-1}$ factor. Then $\Phi\circ (\cdot F_*(xy)^{p-1})$ is a splitting of the Frobenius map $R\to F_*R$, so 
\[
\hb^{\ab-e_s} = \Phi(F_*(xy)^{p-1}\hb^{p(\ab-e_S)}) \in\Phi(F_*(y^{pq}, x^{\ell pq +p-1})) = \bfr^{[q]}.
\]
\end{proof}
\begin{lemma}\label{lemma:crit-point}
    Let $\frac{\ab}{q}$ be a vector in $\mathscr U$. Then $\frac{\ab - e_s}{q}\in \mathscr L$ for all $1\leq s\leq r$ such that $a_s > 0$ if and only if $\frac{\ab'}{q'}\in \mathscr L$ for all $\frac{\ab'}{q'} < \frac{\ab}{q}$.
\end{lemma}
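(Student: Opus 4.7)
The reverse implication is immediate: if every $\frac{\ab'}{q'} < \frac{\ab}{q}$ lies in $\mathscr L$, apply this to $\frac{\ab'}{q'} = \frac{\ab - e_s}{q}$ for each $s$ with $a_s > 0$.

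For the forward implication, I would argue by contradiction. Assume every $\frac{\ab - e_s}{q}$ with $a_s > 0$ lies in $\mathscr L$, yet some $\frac{\ab'}{q'} < \frac{\ab}{q}$ lies in $\mathscr U$. Clearing denominators by a common power of $p$, write $\frac{\ab}{q} = \frac{\bb}{Q}$ and $\frac{\ab'}{q'} = \frac{\cb}{Q}$, where $Q = p^j q$, $\cb \leq \bb$ componentwise, and $\cb \neq \bb$. Pick an index $s$ with $c_s < b_s$; then $b_s = p^j a_s \geq 1$ forces $a_s \geq 1$, and $\cb \leq \bb - e_s$ combined with \Cref{lemma:U-pward-closed} yields
\[
\frac{\ab}{q} - \frac{e_s}{Q} \;=\; \frac{\bb - e_s}{Q} \;\in\; \mathscr U.
\]

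The plan is now to promote this to $\frac{\ab - e_s}{q} \in \mathscr U$, the desired contradiction. This follows by downward induction on $j$ using \Cref{lemma:e_i-in-U}: setting $\ab^\flat := p^{j-1}\ab$ and $q^\flat := p^{j-1}q$, one checks
\[
\frac{\ab^\flat}{q^\flat} - \frac{e_s}{p q^\flat} \;=\; \frac{\ab}{q} - \frac{e_s}{p^j q}, \qquad \frac{\ab^\flat}{q^\flat} - \frac{e_s}{q^\flat} \;=\; \frac{\ab}{q} - \frac{e_s}{p^{j-1}q},
\]
so \Cref{lemma:e_i-in-U} (applicable since $(\ab^\flat)_s = p^{j-1}a_s \geq 1$) converts $\mathscr U$-membership of the former into that of the latter, decreasing $j$ by one. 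Iterating down to $j=0$ delivers the contradiction. The argument is essentially bookkeeping; the only thing to watch is that the hypotheses of \Cref{lemma:e_i-in-U} — namely that $q^\flat$ remains a power of $p$ and $(\ab^\flat)_s$ remains positive — persist throughout the induction, both of which follow from $a_s \geq 1$.
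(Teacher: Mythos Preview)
Your proof is correct and follows essentially the same route as the paper's: both arguments combine \Cref{lemma:U-pward-closed} with an iteration of \Cref{lemma:e_i-in-U} to bridge the gap between denominator $q$ and a larger $p$-power denominator. The paper phrases the forward implication directly (showing $\frac{p^n\ab - e_s}{p^n q}\in\mathscr L$ for all $n$ and then invoking upward closure of $\mathscr U$), while you run the contrapositive, but the content is identical.
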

\begin{proof}
    The implication $\Leftarrow$ is tautological. For the implication $\Rightarrow$, suppose that $\frac{\ab - e_s}{q}\in \mathscr L$ for all $1\leq s\leq r$ with $a_s > 0$. By \Cref{lemma:e_i-in-U}, it follows that $\frac{p^n\ab-e_s}{p^nq}\in \mathscr L$ for all $n > 0$ and all $s$ with $a_s > 0$. For $\frac{\ab'}{q'} < \frac{\ab}{q}$, suppose without loss of generality that $\ab'_1 < \ab_1$. Then $\frac{\ab'}{q'} \leq \frac{q'\ab - e_1}{qq'}$, which implies $\frac{\ab'}{q'}\in \mathscr L$ by \Cref{lemma:U-pward-closed}.
\end{proof}
\begin{defn}[c.f. \cite{hernandez_f-threshold_2017}, Definition 5.1]\label{defn:crit-point}
Let $\frac{\cb}{q}$ be a vector in $\mathscr U$. If $\frac{\cb}{q}$ satisfies either of the equivalent conditions of \Cref{lemma:crit-point}, we say that $\frac{\cb}{q}$ is a \textit{critical point} attached to $\hb$ and $\bfr$. We let $\mathscr C$ denote the set of such critical points.
\end{defn}
\begin{lemma}[c.f. \cite{hernandez_syzygy_2017}, Corollary 5.7]\label{lemma:critical-point-below}
    Let $a$ be in $\Z_{\geq 0}^r$ and $q$ be a power of $p$. Then $\frac{\ab}{q}\in \mathscr U$ if and only there exists a critical point $\frac{\cb}{q}$ in $\mathscr C$ with $\cb\leq \ab$.
\end{lemma}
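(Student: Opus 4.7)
The plan is to reduce the lemma to a straightforward application of \Cref{lemma:crit-point} via a minimality argument, exactly as in Hern\'andez--Teixeira's original setting.

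The $(\Leftarrow)$ direction is immediate: if $\frac{\cb}{q}\in\mathscr C\subseteq \mathscr U$ and $\cb\leq \ab$, then $\frac{\cb}{q}\leq \frac{\ab}{q}$ and \Cref{lemma:U-pward-closed} gives $\frac{\ab}{q}\in\mathscr U$. So the substantive content is the forward direction.

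For $(\Rightarrow)$, suppose $\frac{\ab}{q}\in\mathscr U$ and consider the set
\[
S = \{\cb\in \Z_{\geq 0}^r : \cb\leq \ab \text{ and } \tfrac{\cb}{q}\in \mathscr U\}.
\]
This set is finite and contains $\ab$, so in particular it is nonempty. I would then choose an element $\cb\in S$ that is componentwise minimal in $S$; such an element exists because $S$ is finite under the componentwise partial order. It suffices to show that $\frac{\cb}{q}$ is then a critical point, for which I would verify the first of the two equivalent conditions in \Cref{lemma:crit-point}: for every index $s$ with $c_s>0$, the vector $\cb-e_s$ lies in $\Z_{\geq 0}^r$ and satisfies $\cb - e_s < \cb \leq \ab$ componentwise, so by minimality of $\cb$ in $S$ we must have $\frac{\cb - e_s}{q}\notin\mathscr U$, i.e.\ $\frac{\cb-e_s}{q}\in \mathscr L$. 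Thus $\frac{\cb}{q}$ meets the criterion of \Cref{defn:crit-point}, and by construction $\cb\leq \ab$.

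There is no real obstacle here, since \Cref{lemma:crit-point} has already packaged the nontrivial content (namely, that componentwise minimality with denominator $q$ forces minimality across all denominators $q'$). The only thing to be careful about is working with the fixed denominator $q$ when invoking minimality, which is precisely what \Cref{lemma:crit-point} allows.
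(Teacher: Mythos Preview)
Your proposal is correct and follows essentially the same approach as the paper: define the finite nonempty set $S$ of integer vectors below $\ab$ lying in $\mathscr U$, pick a minimal element, and verify the criterion of \Cref{lemma:crit-point}. The only cosmetic difference is that the paper selects $\cb\in S$ with $\norm{\cb}$ minimal rather than a componentwise-minimal element, but either choice yields the same verification.
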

\begin{proof}
    Suppose $\frac{\ab}{q}\in \mathscr U$. The set $S$ of elements $\ub$ in $\Z^r_{\geq 0}$ such that $\frac{\ub}{q}\in \mathscr U$ and $ \ub\leq \ab$ is finite and nonempty, so choose $\cb\in S$ such that $\norm{\cb}$ is minimal. By construction, we have $\frac{\cb-e_s}{q}\in \mathscr L$ for all $1 \leq s\leq r$, hence $\frac{\cb}{q}$ is a critical point by \Cref{lemma:crit-point}. Conversely, if $\frac{\ab}{q}\geq \frac{\cb}{q}\in \mathscr C$, then $\frac{\ab}{q}\in \mathscr U$ by \Cref{lemma:U-pward-closed}.
\end{proof}
\begin{lemma}\label{lemma:norm-of-upper-region}
    If $\frac{\cb}{q}\in \mathscr U$, then $\norm{\frac{\cb}{q}} \geq 1$.
\end{lemma}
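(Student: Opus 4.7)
The plan is to prove the contrapositive: assuming $\norm{\cb} < q$, I will show $\hb^\cb \notin (y^q, x^{\ell q}) = \bfr^{[q]}$, which means $\frac{\cb}{q}\notin \mathscr U$.

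The single substantive observation is that $\hb^\cb = \prod_{i=1}^{r}(y-g_i)^{c_i}$ is monic of degree $\norm{\cb}$ when viewed as a polynomial in $y$ with coefficients in $k[x]$, since each factor $h_i = y - g_i$ is monic of degree one in $y$. Expanding the product therefore gives
\[
\hb^\cb \;=\; y^{\norm{\cb}} \;+\; \sum_{K=0}^{\norm{\cb}-1} A_K(x)\, y^K
\]
for some polynomials $A_K(x) \in k[x]$. In particular, when $\hb^\cb$ is written in the $k$-basis of $k\llbracket x, y\rrbracket$ consisting of monomials $x^a y^b$, the monomial $x^0 y^{\norm{\cb}}$ appears with coefficient $1$.

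On the other hand, any element of $(y^q, x^{\ell q})$ has the form $y^q u + x^{\ell q} v$ with $u, v\in R$, so every monomial $x^a y^b$ appearing with nonzero coefficient in its $k$-basis expansion satisfies $a \geq \ell q$ or $b \geq q$. The hypothesis $\norm{\cb} < q$ places the pair $(a,b) = (0, \norm{\cb})$ outside this set, so the coefficient $1$ on $x^0 y^{\norm{\cb}}$ in $\hb^\cb$ cannot be matched by any element of $\bfr^{[q]}$. Hence $\hb^\cb \notin \bfr^{[q]}$, establishing the contrapositive.

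There is no genuine obstacle: the proof is a direct monomial-expansion check. The only structural input used is that each $h_i$ is $y$-monic of degree one, which is built into the setup of \Cref{question:polynomial_fractal}; the specific form of the $g_i$ as elements of $x^\ell k[x]$ is not needed for this particular lemma (it will matter for the sharper lemmas in the strip $1 \leq \norm{\tb} < 1 + \ell/D$).
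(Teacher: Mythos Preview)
Your proof is correct and takes essentially the same approach as the paper: both hinge on the observation that the coefficient of $y^{\norm{\cb}}$ in $\hb^{\cb}$ equals $1$. The paper phrases this via the $\bfr$-adic order (showing $\ord_{\bfr}(\hb^{\cb}) = \norm{\cb}$ and using $\bfr^{[q]}\subseteq \bfr^q$), whereas you argue the contrapositive directly against the monomial ideal $\bfr^{[q]} = (y^q, x^{\ell q})$, but the substance is identical.
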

\begin{proof}
First, we note that $h_i\in \bfr$ for all $1\leq i\leq r$, so $\hb^\ab\in \bfr^{\norm{\ab}}$ for any $\ab$ in $\Z^r_{\geq 0}$. Conversely, the coefficient of $y^{\norm{\ab}}$ in $\hb^\ab$ is equal to 1, so $\ord_{\bfr}(\hb^{\ab}) = \norm{\ab}$. If $\frac{\cb}{q}\in \mathscr U$, then $\hb^\cb\in \bfr^{[q]}\subseteq \bfr^q$, so $\norm{\cb} = \ord_\bfr(\hb^\cb) \geq q$.    
\end{proof}
\begin{lemma}[c.f. \cite{hernandez_syzygy_2017}, Remark 5.8]\label{lemma:unique-critical-point}
   Let $D$ denote the largest total degree of any of the polynomials $g_1,\dots, g_r$. If $\frac{\bb}{q}, \frac{\cb}{q}$ are distinct elements of $\mathscr C$, then $\norm{\max(\frac{\bb}{q},\frac{\cb}{q})} \geq 1 +\frac{\ell}{D}$.
\end{lemma}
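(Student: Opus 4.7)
The plan is as follows. Since both $\bb/q$ and $\cb/q$ are minimal in $\mathscr U$, they must be incomparable: if $\bb \leq \cb$, then for any $s$ with $c_s > b_s$ we would have $\cb - e_s \geq \bb \in \mathscr U$, contradicting the criticality of $\cb$. So I choose indices $i, j$ with $b_i > c_i$ and $c_j > b_j$, and set $\mathbf m := \max(\bb, \cb)$. The inequalities $\mathbf m - e_i \geq \cb$ (at index $i$, $b_i - 1 \geq c_i$; elsewhere $m_s \geq c_s$) and $\mathbf m - e_j \geq \bb$ show that $\hb^{\mathbf m - e_i}$ and $\hb^{\mathbf m - e_j}$ both lie in $\bfr^{[q]}$, as multiples of $\hb^\cb$ and $\hb^\bb$ respectively.

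The key identity $h_i - h_j = g_j - g_i$ then produces
\[
(g_j - g_i)\hb^{\mathbf m - e_i - e_j} \;=\; \hb^{\mathbf m - e_j} - \hb^{\mathbf m - e_i} \;\in\; \bfr^{[q]}.
\]
Writing $g_j - g_i = x^{d_{ij}} u(x)$ with $u \in k\llbracket x \rrbracket^\times$ and $d_{ij} \in [\ell, D]$, and using $(y^q, x^{q\ell}) : x^{d_{ij}} = (y^q, x^{q\ell - d_{ij}})$, we obtain $\hb^{\mathbf m - e_i - e_j} \in (y^q, x^{q\ell - d_{ij}})$. To extract $\norm{\mathbf m/q} \geq 1 + \ell/D$, I expand $\hb^{\mathbf m - e_i - e_j} = \sum_k c_k(\mathbf m - e_i - e_j)\,y^{\norm{\mathbf m} - 2 - k}$, where $c_k$ denotes the $k$-th signed partial elementary symmetric polynomial in the $-g_s$'s, and examine each coefficient of $(g_j - g_i)\hb^{\mathbf m - e_i - e_j}$: those in $y$-degrees less than $q$ must lie in $(x^{q\ell})$. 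The top coefficient ($k=0$) equals $g_j - g_i$, whose $x$-order is at most $D < q\ell$ whenever $q > D/\ell$; this forces $\norm{\mathbf m} \geq q + 2$ in that range, and together with the trivial $\norm{\mathbf m} > \norm{\bb} \geq q$ (from incomparability) already delivers $\norm{\mathbf m}/q \geq 1 + \ell/D$ whenever $q \leq 2D/\ell$. For larger $q$ the full bound should emerge by iterating the analysis on the coefficient of $y^{q-1}$, namely $(g_j - g_i)\,c_{\norm{\mathbf m} - q - 1}(\mathbf m - e_i - e_j)$, whose generic $x$-order is $d_{ij} + \ell(\norm{\mathbf m} - q - 1)$, and by optimizing the choice of $(i, j)$ over $\supp(w) \times \supp(v)$.

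The main obstacle is ruling out non-generic cancellations in $c_{\norm{\mathbf m} - q - 1}(\mathbf m - e_i - e_j)$: such cancellations could raise its $x$-order beyond the generic value $\ell(\norm{\mathbf m} - q - 1)$ and permit the containment in $(x^{q\ell})$ to hold with $\norm{\mathbf m}$ smaller than the generic bound $2q + 1 - d_{ij}/\ell$. The way to rule these out is to invoke the criticality of $\bb$ and $\cb$ simultaneously via the recursion $c_s(\ab - e_t) = \sum_{n \geq 0} g_t^n\,c_{s - n}(\ab)$, which converts each critical-point condition into explicit vanishing constraints on the low-degree $c_s(\bb)$ and $c_s(\cb)$; these propagate to $c_s(\mathbf m - e_i - e_j)$ via the convolution $c_s(\bb + v) = \sum_{s_1 + s_2 = s} c_{s_1}(\bb)\,c_{s_2}(v)$ with $v = \mathbf m - \bb$ (and similarly with $\cb, w$). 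The expectation is that any cancellation strong enough to undermine the $x$-order bound on $c_{\norm{\mathbf m} - q - 1}$ would force $c_s(\bb)$ or $c_s(\cb)$ to satisfy incompatible vanishing, contradicting the criticality of one of $\bb$ or $\cb$ and closing the argument.
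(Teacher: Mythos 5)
Your setup is close to the paper's in spirit (incomparability of distinct critical points, looking at $\mathbf m - e_i - e_j$, and colon-ing $\bfr^{[q]}$ against $(h_i,h_j)$), but there are two genuine gaps, and your own closing paragraph acknowledges the second without resolving it.

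First, you never address the possibility that $\frac{\mathbf m - e_i - e_j}{q} \in \mathscr U$. In that case $\hb^{\mathbf m - e_i - e_j}$ already lies in $\bfr^{[q]} = (y^q, x^{q\ell})$, so your colon containment is vacuous and the coefficient analysis yields nothing. This case genuinely occurs, and the paper handles it by a descent: if $\ab^{(0)} := \mathbf m - e_i - e_j$ still defines a point of $\mathscr U$, apply \Cref{lemma:critical-point-below} to find a critical point $\cb^{(1)} \leq \ab^{(0)}$, note that $\cb^{(1)} \neq \bb$ (its $j$-th coordinate drops below $b_j$), and observe that $\norm{\max(\bb,\cb^{(1)})}$ strictly decreases; iterate until the associated $\ab^{(s)}$ lands in $\mathscr L$. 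It is exactly this $\mathscr L$-membership that licenses the rest of the argument, and you cannot skip it. (Equivalently one may take a minimal counterexample, but something of this shape is needed.)

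Second, even granting $\frac{\mathbf m - e_i - e_j}{q} \in \mathscr L$, your plan to conclude relies on the ``generic'' $x$-order of $z_{q-1}$ being $\ell(\norm{\mathbf m} - q - 1)$, and you explicitly flag that cancellations can raise this order and break the estimate; the proposed repair via recursions on partial elementary symmetric polynomials and criticality-induced vanishing constraints is not carried out and is not obviously a proof. The paper avoids this obstruction entirely by bounding $z_{q-1}$ in the \emph{opposite} direction: $z_{q-1}$ is a polynomial in the $g_s$'s, each of total degree $\leq D$, with exactly $\norm{\mathbf m} - q - 1$ factors, so $\deg_x z_{q-1} \leq D(\norm{\mathbf m} - q - 1)$ with no cancellation caveat. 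Combining this with $\ord_x z_{q-1} \geq q\ell - d_{ij}$ (and $z_{q-1} \neq 0$) gives $q\ell - d_{ij} \leq D(\norm{\mathbf m} - q - 1)$, hence the desired lower bound on $\norm{\mathbf m}$. To certify $z_{q-1} \neq 0$ you also need the sharper colon $\bfr^{[q]} : (h_i, h_j) = (y^q, x^{q\ell}, x^{q\ell - d_{ij}}y^{q-1})$, not merely $\bfr^{[q]} : (g_j - g_i) = (y^q, x^{q\ell - d_{ij}})$ as in your writeup: the quotient $(y^q, x^{q\ell}, x^{q\ell - d_{ij}}y^{q-1})/(y^q, x^{q\ell})$ is concentrated in $y$-degree $q-1$, so an element of the colon that is not in $\bfr^{[q]}$ must have a nonzero $y^{q-1}$ coefficient, and this is what forces $z_{q-1} \neq 0$.
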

\begin{proof}

Set $\cb^{(0)} = \cb$. To prove the claim, we produce a sequence of critical points $\cb^{(0)},\cb^{(1)},\dots, \cb^{(s)}$ and possibly non-critical points $\ab^{(0)},\dots, \ab^{(s)}$ such that $\ab^{(s)}\leq \max(\bb, \cb^{(0)})$; we will show that $\norm{\ab^{(s)}}\geq  1 + \frac{\ell}{D}$.

Consider the following auxiliary conditions, which we will show hold for all $1\leq m\leq s$:
\begin{align}
\max(\bb,\cb^{(m)})\leq &\max(\bb, \ab^{(m-1)})\label{eqn:crit_pt_below_a}\\
 &\max(\bb, \ab^{(m)}) < \max(\bb, \cb^{(m)})\label{eqn:strict_descent}
\end{align} 

Suppose we have constructed $\cb^{(0)},\dots, \cb^{(n)}, \ab^{(0)},\dots, \ab^{(n-1)}$ satisfying the conditions
\begin{enumerate}[(i)]
\item $\cb^{(n)}\neq \bb$;
\item $\frac{\ab^{(0)}}{q},\dots, \frac{\ab^{(n-1)}}{q}\in \mathscr U$;
\item Equation \ref{eqn:crit_pt_below_a} holds for $1\leq m\leq n$;
\item Equation \ref{eqn:strict_descent} holds for $0\leq m\leq n-1$.
\end{enumerate}
For the base case $n = 0$, the only non-vacuous hypothesis is that $\cb^{(0)}\neq \bb$, which holds by assumption. By \Cref{defn:crit-point}, the distinct critical points $\frac{\bb}{q},\frac{\cb^{(n)}}{q}$ are incomparable, so let $1\leq i_n, j_n\leq r$ such that $c^{(n)}_{i_n} > b_{i_n}$ and $c^{(n)}_{j_n} < b_{j_n}$. Set $\ab^{(n)} = \max(\bb, \cb^{(n)}) - e_{i_n} - e_{j_n}$. We compute 
\begin{equation}\label{eqn:max_of_a_and_b}
\max(b_t, a_t^{(n)}) = \begin{cases}
    c^{(n)}_{i_n} - 1 & t={i_n}\\
    b_{j_n} & t = {j_n}\\
    \max(c^{(n)}_t,b_t) & t\neq i_n,j_n,
\end{cases}
\end{equation}
so Equation \ref{eqn:strict_descent} holds for $m = n$. If $\frac{\ab^{(n)}}{q}\in \mathscr L$, we set $s = n$ and terminate the sequence. Otherwise, we apply \Cref{lemma:critical-point-below} to produce $\cb^{(n+1)}\leq \ab^{(n)}$ where $\frac{\cb^{(n+1)}}{q}\in \mathscr C$. By construction, Equation \ref{eqn:crit_pt_below_a}
 holds for $m=n+1$. As $c^{(n+1)}_{j_n} \leq a^{(n)}_{j_n} = b_{j_n} - 1$, the vectors $\cb^{(n+1)}, \bb$ are again distinct. 

By Equations \ref{eqn:crit_pt_below_a},\ref{eqn:strict_descent}, we have a strictly decreasing sequence of nonnegative integer vectors
\[
\max(\bb, \cb^{(0)}) > \max(\bb, \cb^{(1)}) > \dots 
\]
which necessarily terminates. In particular, there exists some index $s\geq 0$ such that $\frac{\ab^{(s)}}{q}\in \mathscr L$. Additionally, by construction of $\ab^{(s)}$ we compute
\[
\ab^{(s)} = \max(\bb, \cb^{(s)}) - e_{i_s}-e_{j_s} \leq \max(\bb, \cb^{(0)}) - e_{i_s} - e_{j_s},
\]
so $\norm{\max(\bb, \cb^{(0)})}\geq \norm{\ab^{(s)}}+2$. To prove that $\norm{\max(\frac{\bb}{q}, \frac{\cb^{(0)}}{q})}\geq 1 + \frac{\ell}{D}$, it therefore suffices to show that 
\begin{equation}\label{eqn:bound-from-total-degree}
\norm{\ab^{(s)}}\geq q + \frac{\ell q}{D} - 2
\end{equation}

 % Set $\ab^{(n)}:= \max(\bb, \cb^{(n)}) - e_i-e_j$; \Cref{eqn:max_of_a_and_b} holds for $m = n$ by construction. If $\ab^{(n)}\in \mathscr L$ then we terminate the sequence at the $n$th step; else we again apply \Cref{lemma:critical-point-below} to produce a critical point $\cb^{(n+1)}\leq \ab^{(n)}$. 

% We may repeat this construction as long as we continue producing points $\ab^{(i)}$ where $\frac{\ab^{(i)}}{q}\in \mathscr U$: setting $\ab^{(i)} = \max(\cb^{(i-1)}, \bb^{(i-1)})$, we produce new critical points $\cb^{(i)}$ satisfying \Cref{eqn:descending-critical-points}. As the set of nonnegative integers vector smaller than $\max(\bb,\cb^{(0)})$ is finite, this process must eventually terminate by producing a point $\ab^{(s)}$ with $ \frac{\ab^{(s)}}{q}$ in $\mathscr L$. From \Cref{eqn:descending-critical-points} we deduce $\max(\bb, \cb^{(s)})\leq \max(\bb, \cb^{(0)})$, so it suffices to show that $\norm{\max(\frac{\bb}{q}, \frac{\cb^{(s)}}{q})} \geq 1 + \frac{\ell}{n}$.

Without loss of generality, write $\ab = \ab^{(s)}, \cb = \cb^{(s)}, i_s = 1$ and $j_s = 2$.  With $h_1 = y - g_1, h_2 = y- g_2$, consider the automorphism $\phi: R\to R$ given by $x\mapsto x$ and $y\mapsto y - g_1$. Then $\phi(h_1) = y, \phi(\bfr) = \bfr$, and $\phi(h_i) = y - g_i - g_1$, where $g_i - g_1$ is a polynomial in $k[x]$ of total degree at most $D$. We may therefore assume without loss of generality that $g_1 = 0$ so that $h_1 = y$.

Set $m = \ord_x(g_2)$. As $x^\ell$ divides $g_2$ and $\deg(g_2)\leq D$, we necessarily have $\ell \leq m\leq D$. We compute $(h_1, h_2) = (y, x^m)$. As $\frac{\ab}{q}\in \mathscr L,$ we have ${\hb}^{\ab}\notin \bfr^{[q]}.$ On the other hand, $\frac{\ab+e_1}{q}\geq \frac{\bb}{q}$ and $\frac{\ab+e_2}{q}\geq \frac{\cb}{q}$, so $\frac{\ab+e_1}{q},\frac{\ab+e_2}{q}\in \mathscr U$ by \Cref{lemma:critical-point-below}; it follows that $\hb^{\ab}(h_1,h_2)\subseteq \bfr^{[q]}$. Rewriting $(h_1,h_2)$ as $(y, x^m)$ and applying \cite[Lemma 3.5]{kadyrsizova_lower_2022}, we deduce that 
\[
\hb^{\ab}\in (\bfr^{[q]}: (h_1,h_2)) = (y^q,x^{\ell q},  x^{\ell q-m}y^{q-1}).
\]
Write $\hb^{\ab} = \sum_{i\geq 0}z_iy^i$, where $z_i\in k[x]$. The quotient module \[\dfrac{(y^q, x^{\ell q}, x^{\ell q - m}y^q-1)}{(y^q, x^{\ell q})}\] is spanned by the monomials $y^{q-1}x^{\ell q - m},\dots, y^{q-1}x^{\ell q - 1}$, so $z_{q-1}$ is nonzero and has degree at least $lq-m$. On the other hand, $z_{q-1}$ is a polynomial of degree $\norm{\ab}-(q-1)$ in the inputs $g_1,\dots, g_r$, so we have
\[
\ell q - m \leq \deg(z_{q-1}) \leq D(\norm{\ab} - (q-1)).
\]
As $m \leq D$, we conclude that $\norm{\ab}\geq (q-1) + \frac{\ell q - D}{D}$, so \Cref{eqn:bound-from-total-degree} holds.
\end{proof}
\begin{cor}[c.f. \cite{hernandez_syzygy_2017}, Theorem 5.9]\label{cor:critical-point-computes-ft}
    Let $h_1,\dots, h_r$ be polynomials as in \Cref{question:polynomial_fractal} where the $g_i$ are distinct. Let $D$ denote the largest degree of the polynomials $g_1,\dots, g_r$. Let $f$ be an element of $k\llbracket x,y\rrbracket$
    such that $f = \hb^\tb = h_1^{t_1}\dots h_r^{t_r}$ for some positive integers $t_1,\dots, t_r$. Setting $\bfr = (y, x^\ell)$, either $\ft^{\bfr}(f) \geq \frac{1 + \ell/D}{\norm{\tb}}$ or there exists a unique critical point $\cb\leq \frac{1 + \ell/D}{\norm{\tb}}\tb$ which computes $\ft^{\bfr}(f)$: that is, $\ft^{\bfr}(f) = \max_{1\leq i\leq r} \frac{c_i}{t_i}$.
\end{cor}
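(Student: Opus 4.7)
The plan is to recast $\ft^{\bfr}(f)$ as an infimum over critical points and then invoke \Cref{lemma:unique-critical-point}. Translating: for $q = p^e$ and integer $t \geq 0$, $f^t = \hb^{t\tb} \in \bfr^{[q]}$ iff $t\tb/q \in \mathscr U$, and by \Cref{lemma:critical-point-below} this occurs iff some critical point $\cb \in \mathscr C$ expressible with denominator $q$ satisfies $\cb \leq t\tb/q$, i.e., $t \geq q\max_i c_i/t_i$. Setting $\mu(\cb) := \max_i c_i/t_i$, the smallest admissible integer $t$ at level $q$ equals $\lceil q \cdot \min \mu \rceil$, the minimum taken over such $\cb$. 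Dividing by $q$ and letting $e \to \infty$ (the ceiling error decays to zero, and $\mathscr C$ is the nested union of critical points across denominators) gives
\[
\ft^{\bfr}(f) = \inf_{\cb \in \mathscr C}\mu(\cb).
\]

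Assume now $\ft^{\bfr}(f) < (1+\ell/D)/\norm{\tb}$. Some $\cb \in \mathscr C$ satisfies $\mu(\cb) < (1+\ell/D)/\norm{\tb}$, and I claim this $\cb$ is unique: if $\cb_1 \neq \cb_2$ in $\mathscr C$ both had $\mu$ strictly below $(1+\ell/D)/\norm{\tb}$, then writing them over a common denominator $q$, \Cref{lemma:unique-critical-point} yields $\norm{\max(\cb_1, \cb_2)} \geq 1 + \ell/D$; but componentwise, $\max(c_{1,i}, c_{2,i}) \leq \max(\mu(\cb_1), \mu(\cb_2))\, t_i$, which sums to strictly less than $1 + \ell/D$, a contradiction.

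Hence $\cb$ is unique and attains the infimum, so $\ft^{\bfr}(f) = \mu(\cb) = \max_i c_i/t_i$. The bound $c_i \leq \mu(\cb)\,t_i < (1+\ell/D)\,t_i/\norm{\tb}$ gives $\cb \leq (1+\ell/D)\tb/\norm{\tb}$. Any other critical point $\cb'$ with $\cb' \leq (1+\ell/D)\tb/\norm{\tb}$ has $\mu(\cb') \leq (1+\ell/D)/\norm{\tb}$; by the uniqueness just established, $\mu(\cb') = (1+\ell/D)/\norm{\tb} > \ft^{\bfr}(f)$, so $\cb'$ does not compute the $F$-threshold.

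The main obstacle is the first step — identifying $\ft^{\bfr}(f)$ with $\inf_{\cb \in \mathscr C}\mu(\cb)$ — which requires careful handling of the ceiling function and verification that the minima over $\mathscr C$ with successive denominators $p^e$ converge to the infimum over all of $\mathscr C$. After that, the proof is a direct application of \Cref{lemma:unique-critical-point}.
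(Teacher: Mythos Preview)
Your proposal is correct and follows essentially the same route as the paper: both arguments pivot on \Cref{lemma:critical-point-below} to produce critical points and on \Cref{lemma:unique-critical-point} to force uniqueness below the threshold $1+\ell/D$, and the technicality you flag (that critical points with denominator $p^e$ remain critical with denominator $p^{e+1}$, needed to bring two critical points to a common denominator) is exactly what \Cref{lemma:e_i-in-U} supplies. The paper avoids stating the infimum formula $\ft^{\bfr}(f)=\inf_{\cb\in\mathscr C}\max_i c_i/t_i$ explicitly, instead producing for each $e$ a critical point $\cb^{(e)}\leq \frac{\lceil p^e\mu\rceil}{p^e}\tb$ and using \Cref{lemma:unique-critical-point} to show these stabilize for large $e$; this is the same argument in slightly different packaging.
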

\begin{proof}
    Let $\lambda = \frac{1 + \ell/D}{\norm{\tb}}$. Write $\mu:=\ft^{\bfr}(f)$ and suppose that $\mu < \lambda$. For all $e > 0$, \Cref{prop:properties-of-ft} (4) implies that
    $f^{\ceil{p^e\mu}}\in \bfr^{[p^e]}$. By definition, $\frac{\ceil{p^e\mu}}{p^e}\tb\in \mathscr U$, so by \Cref{lemma:critical-point-below}, there exists $\cb^{(e)}$ in $\mathscr C$ with $\cb^{(e)}\leq \frac{\ceil{p^e\mu}}{p^e}\tb$. Choose $e_0 \gg 0$ such that for all $e \geq e_0$ we have $\frac{\ceil{p^e\mu}}{p^e} < \lambda$. For any $e > e_0$ we have \[
    \norm{\max(\cb^{(e_0)}, \cb^{(e)})} \leq \norm{\frac{\ceil{p^e\mu}}{p^e}\tb} = \frac{\ceil{p^e\mu}}{p^e}\norm{\tb} < \lambda\norm{\tb} = 1 + \frac{\ell}{D}.\] By \Cref{lemma:unique-critical-point}, we must have $\cb^{(e)} = \cb^{(e_0)}$. Let $\cb:= \cb^{(e_0)}$.
    
    As $\cb\leq \frac{\ceil{p^e}\mu}{p^e}\tb$ for all $e \geq e_0$, it follows that $\cb\leq \mu \tb$. On the other hand, for any $\mu' < \mu$, we may choose $e\gg 0$ such that $\ceil{p^e\mu'} < \nu_f^{\bfr}(p^e)$ by \Cref{defn-prop:f-threshold}. Consequently, $\frac{\ceil{p^e\mu'}}{p^e}\tb \notin \mathscr U$, so $\cb^{(e_0)}\not\leq \frac{\ceil{p^e\mu'}}{p^e}\tb$ and hence $\cb\not\leq \mu'\tb$. We conclude that $\mu$ is the smallest real number for which $\cb\leq \mu \tb$, hence $\mu = \max_{1\leq i\leq r} \frac{c_i}{t_i}$.
\end{proof}
\begin{remark}
Putting $\ell = n = 1$, the above corollary gives an alternate proof of \cite[Theorem 5.9]{hernandez_syzygy_2017} in the special case $\bfr = (x,y)$: given any homogeneous polynomial $f$, we may apply a linear change of coordinates so that $x\nmid f$, after which \Cref{cor:critical-point-computes-ft} applies. 
\end{remark}
The following example shows that the parameter $D$ is necessary. Unlike the homogeneous case (\cite[Theorem 5.9]{hernandez_f-threshold_2017}), if $\ft^{\bfr}(f) < \frac{2}{\norm{\tb}}$, there may not be a unique critical point $\frac{\cb}{q}\leq \frac{2\tb}{\norm{\tb}}$ computing $\ft^{\bfr}(f)$.
\begin{example}\label{example:critical-points-insufficient}
    Let $R = \F_2\llbracket x, y\rrbracket$ and $\ell = 1$ so that $\bfr = (x,y)$ and $\ft^{\bfr}(-) = \fpt(-)$. We define $h_1 = y + x, h_2=y + x^2, h_3 = y+x^4$. We consider $\tb = (1,2,1), f:= \hb^{\tb} = h_1h_2^2h_3$. Then $f^7\in \mf^{[16]}$, so $\fpt(f) \leq \frac{7}{16} < \frac{2}{\norm{\tb}}$. There are many critical points below $\frac{2\tb}{\norm{\tb}} = (\frac{1}{2}, 1, \frac{1}{2})$, however: for instance, the points $\left(\dfrac{1}{2}, 1, \dfrac{1}{2}\right), \left(\dfrac{3}{8}, \dfrac{13}{16}, \dfrac{7}{16}\right), \left(\dfrac{1}{4}, \dfrac{7}{8}, \dfrac{3}{8}\right)$ are all critical. Moreover, none of these points compute the actual value of $\fpt(f)$, which is equal to $\dfrac{3}{7}$ by a computation in Macaulay2 \cite{M2} using the FrobeniusThresholds package \cite{FrobeniusThresholdsSource} .
\end{example}
\section{Proof of \texorpdfstring{\Cref{thm:E1-char-p}}{Theorem 1.1}}\label{sec:proof}
We state a general version of the main theorem and outline a proof.
\begin{theorem}\label{thm:E1-char-p}
Let $(R, \mf)$ be a regular local ring of characteristic $p > 0$ and $\af\subseteq R$ a proper ideal. Let $d = \ord_\mf(\af)$ and write $d = qs$ where $q$ is a power of $p$ and $\gcd(p,s)=1$. Then 
\begin{equation}\label{eqn:E1-char-p-hard}
   \text{If $\ft^{\mf}(\af) = \frac{1}{d}$, then there exists $g$ in $ \mf^{[q]}\widehat{R}$ such that $g^s \widehat{R} = \af \widehat{R}$.} 
\end{equation}
Suppose further that, for all prime elements $\pi$ in $ R$, the formal fiber $\widehat{R}\otimes_R R_{(\pi)}/\pi R_{(\pi)}$ is reduced. Then 
\begin{equation}\label{eqn:E1-char-p-excellent}
    \text{If $\ft^{\mf}(\af) = \frac{1}{d}$, there exists $h$ in $ \mf^{[q]}$ such that $h^s R = \af R$.}
\end{equation}
\end{theorem}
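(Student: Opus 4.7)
Both $\ord_\mf(\af)$ and $\ft^\mf(\af)$ are preserved under the faithfully flat map $R \to \widehat R$, so I assume $R$ is complete; extending the residue field to its algebraic closure further preserves the invariants, so I also assume $k$ is algebraically closed. Choose $f \in \af$ with $\ord_\mf(f) = d$; by \Cref{prop:properties-of-ft}(1),(3), the sandwich $1/d \leq \ft^\mf(fR) \leq \ft^\mf(\af) = 1/d$ gives $\ft^\mf(fR) = 1/d$. Iterating \Cref{prop:properties-of-ft}(5) with $n-2$ generic elements of $\mf \setminus \mf^2$ (chosen so the order does not drop, which is possible because the leading form of $f$ is not generically divisible by a linear form), I pass to a quotient $\bar R \cong k\llbracket x, y\rrbracket$ in which $\bar f$ satisfies $\ord_{\bar\mf}(\bar f) = d$ and $\ft^{\bar\mf}(\bar f) = 1/d$.

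\textbf{Critical point analysis in two variables.} After a change of coordinates in $\bar R$ and Weierstrass preparation, $\bar f$ is a unit multiple of a Weierstrass polynomial of degree $d$ in $y$, factored over the Puiseux closure of $k((x))$ as $\prod_{i=1}^r (y - \gamma_i)^{t_i}$ with $\sum t_i = d$. After truncating each $\gamma_i$ to a polynomial of sufficiently large degree (which does not change $\ft$) and further preparing so that the setup of \Cref{cor:critical-point-computes-ft} applies with $\bfr = \mf$ and $\ell = 1$, the corollary becomes available. Since $1/d < (1+1/D)/d$, I am forced into the critical-point alternative: there is a unique $\cb \in \mathscr C$ with $\ft^\mf(\bar f) = \max_i c_i/t_i = 1/d$, giving $c_i \leq t_i/d$. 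Combined with $\norm{\cb} \geq 1$ (\Cref{lemma:norm-of-upper-region}) and $\sum t_i/d = 1$, I conclude $\sum c_i = 1$ and hence $\cb = \tb/d$ exactly. Because $\cb$ has coordinates in $\Z[1/p]$ and $\gcd(p, s) = 1$, each $t_i$ is divisible by $s$; writing $t_i = s u_i$ yields $\sum u_i = q$, $\cb = \ub/q$, and $\bar f = \bar g^s$ with $\bar g = \prod (y - \gamma_i)^{u_i}$. The condition $\ub/q \in \mathscr U$ is precisely $\bar g \in \mf^{[q]}$.

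\textbf{Lifting, principality, and descent.} The two-dimensional conclusion must be promoted to a statement about $\af$ in $R$. Varying the choice of 2-plane section, uniqueness of the critical point forces the elements $\bar g$ from different cuts to be compatible, assembling to a single $g \in R$ with $g \in \mf^{[q]}$ and $f = u g^s$ for a unit $u$, so $g^s R \subseteq \af$. If $\af \supsetneq g^s R$, then picking $Q \in \af \setminus g^s R$ and analyzing the leading forms of the ideal $(g^s, Q)$ produces $\ft^\mf((g^s, Q)) > 1/d$, contradicting monotonicity; hence $\af = g^s R$, proving the first statement. For the second statement, reducedness of the codimension-one formal fibers of $R$ is used to descend the decomposition from $\widehat R$ to $R$, realizing $g$ as an element $h \in \mf^{[q]}$ with $\af R = h^s R$. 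The main obstacles are (i) handling Puiseux roots to legitimately invoke \Cref{cor:critical-point-computes-ft}, (ii) the coherent lifting of the two-dimensional $\bar g$ to an element of $R$, and (iii) the Newton-polygon/$F$-threshold comparison ruling out extra generators of $\af$.
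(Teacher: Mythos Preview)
The central gap is the lifting step. You correctly reduce to a two-dimensional quotient and (modulo the issues below) deduce that $\bar f$ is an $s$th power there, but the claim that ``uniqueness of the critical point forces the elements $\bar g$ from different cuts to be compatible, assembling to a single $g \in R$'' is not an argument: critical points for different $2$-plane sections live in unrelated parameter spaces (the number $r$ of distinct roots, and the roots themselves, depend on the section), so there is nothing to match up. What is actually needed is the contrapositive statement that if $f$ has \emph{no} $s$th root in $R$ then some well-chosen hyperplane restriction $\bar f$ also has no $s$th root while keeping $\ord_{\bar\mf}(\bar f)=d$. This is exactly what the paper proves via Flenner's local Bertini theorem (\Cref{lemma:reduced-bertini-flenner}): one chooses $x\in\mf\setminus\mf^2$ so that the squarefree part of $f$ stays squarefree modulo $x$, whence the prime-factor multiplicities of $f$ survive in $\bar f$. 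The induction then runs as: no $s$th root $\Rightarrow$ $\bar f$ has no $s$th root $\Rightarrow$ $\ft^{\bar\mf}(\bar f) > 1/d$ by the inductive hypothesis $\Rightarrow$ $\ft^{\mf}(f) > 1/d$. Your proposal goes in the opposite direction (trying to lift the conclusion rather than descend the hypothesis), and no mechanism for that is given.

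There are further problems you flag but do not resolve. In the two-variable step, after passing to the extension $U = k\llbracket t, y\rrbracket$ in which the roots live, the relevant ideal is $\bfr = \mf U = (y, t^\ell)$ with $\ell$ the ramification index, not $\ell = 1$; the inequality you need from \Cref{cor:critical-point-computes-ft} is $1/d < (1+\ell/D)/d$. Your $\bar g = \prod (y-\gamma_i)^{u_i}$ lies a priori only in $U$, and you give no argument that it descends to $\bar R$; the paper uses \Cref{lemma:s-root} and normality of $R$ for this. The reduction to $k = \bar k$ likewise requires a descent back to the original residue field (\Cref{lemma:unit-multiple-of-power}), which you omit. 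Finally, your proposed proof that $\af$ is principal (``analyzing leading forms produces $\ft^{\mf}((g^s,Q)) > 1/d$'') is not substantiated and would be circular if it appeals to the theorem for non-principal ideals; the paper instead shows directly, via the colon-ideal computation $(\mf^{[qp^e]}:g^{p^e-1-sr}) = (x_1^{qp^e},\dots, g^{1+sr},\dots, x_n^{qp^e})$, that every $z \in \af$ satisfies $z^{r+1}\in g^{sr}R$ for all $r$, hence lies in the integral closure of $g^sR$, which is $g^sR$ itself.
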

Since $F$-finite rings are excellent \cite[Remark 13.6]{gabber_notes_2004}, our main theorem follows from this more precise version. A version for pairs $(R, \af_1^{t_1}\dots \af^{t_r}_{r})$ where $t_i\in \R$ will appear in the author's dissertation.

As a special case of \Cref{thm:E1-char-p}, one can classify homogeneous polynomials $f$ in $ k\llbracket x_1,\dots, x_n\rrbracket$ for which $\fpt(f) = \dfrac{1}{\deg(f)}$. This case is given in \cite[Remark 3.2]{kadyrsizova_lower_2022}; our proof follows a similar strategy. We reduce the claim to the case of a principal ideal in a complete local ring over an algebraically closed field, after which we apply a local Bertini theorem (\Cref{lemma:reduced-bertini-flenner}) to reduce to the 2-dimensional case. 

At this point, the two arguments diverge: unlike homogeneous polynomials, a power series in two variables may be irreducible, so we use the Weierstrass preparation theorem to write $f$ as $u(y^d + a_1y^{d-1}+\dots + a_d)$ where $u\in k\llbracket x,y\rrbracket^\times$ and  $a_1,\dots, a_d\in k\llbracket x\rrbracket$ (\Cref{lemma:weierstrass-preparation}). Next, we pass to a finite flat extension $(R,\mf) \to (U, \nf)$ over which $f$ factors as $(y-\theta_1)\dots (y-\theta_d)$. This comes at the cost of having to consider the $F$-threshold of $f$ at $\mf U$ instead of the $F$-pure threshold of $f$. Using \cite[Proposition 5.1]{sato_accumulation_2023}, we reduce to the case that the $\theta_i$ are polynomials in a finite extension $k[x^{1/u}]$ of $k[x]$. Finally, we use the critical point framework (\Cref{cor:critical-point-computes-ft}) to deduce that $\theta_1=\dots =\theta_d$, after which we apply \Cref{lemma:s-root} to deduce that $f$ admits an $s$th root in $R$. As $d = qs$, if $f = g^s$, then $\fpt(g) = s\fpt(f) = \dfrac{1}{q} = \dfrac{1}{\ord_{\mf}(g)}$, so the result follows from the degree-$q$ case (\Cref{lemma:degree-q-case}). We divide the proof into five subsections.
\begin{enumerate}
    \item \Cref{eqn:E1-char-p-hard} holds when $\af$ is principal, $k = \bar{k}$, and $R = k\llbracket x, y\rrbracket$.
    \item \Cref{eqn:E1-char-p-hard} holds when $\af$ is principal, $k = \bar{k}$, and $R = k\llbracket x_1,\dots, x_n\rrbracket$.
    \item \Cref{eqn:E1-char-p-hard} holds when $\af$ is principal, $k$ is any field, and $R = k\llbracket x_1,\dots, x_n\rrbracket$.
    \item \Cref{eqn:E1-char-p-hard} holds when $\af$ is any ideal, $k$ is any field, and $R = k\llbracket x_1,\dots, x_n\rrbracket$.
    \item \Cref{thm:E1-char-p} holds.
\end{enumerate}
\subsection{Step (1)}
\begin{lemma}\label{lemma:degree-q-case}
    Let $(R, \mf)$ be a regular local ring of characteristic $p > 0$ and let $f$ be an element of $R$. If $q = p^e$ and $\ord_\mf(f) =q$, then $\ft^{\mf}(f) = \frac{1}{q}$ if and only if $f\in \mf^{[q]}$. 
\end{lemma}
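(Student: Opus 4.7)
The lemma is almost immediate from the general properties of $F$-thresholds collected in \Cref{defn-prop:f-threshold} and \Cref{prop:properties-of-ft}. I expect no serious obstacles; the proof is essentially a two-line unpacking of the definitions.

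First, the lower bound $\ft^{\mf}(f) \geq \frac{1}{q}$ is free: since $\ord_\mf(f) = q$, part (3) of \Cref{prop:properties-of-ft} gives $\ft^{\mf}(f) \geq \frac{1}{\ord_\mf(f)} = \frac{1}{q}$. So in both directions of the biconditional it suffices to decide when equality holds.

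For the ``if'' direction, suppose $f \in \mf^{[q]}$. Then with $e$ chosen so that $q = p^e$, the pair $(t,e) = (1,e)$ shows that $\frac{1}{q}$ belongs to the set $\left\{ \frac{t}{p^e} : f^t \in \mf^{[p^e]}\right\}$, whose infimum is $\ft^{\mf}(f)$ by \Cref{defn-prop:f-threshold}. Hence $\ft^{\mf}(f) \leq \frac{1}{q}$, and combined with the lower bound we obtain equality.

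For the ``only if'' direction, suppose $\ft^{\mf}(f) = \frac{1}{q}$. Applying the strict inequality in part (4) of \Cref{prop:properties-of-ft} at the exponent $e$ with $p^e = q$ gives
\[
\frac{\nu_f^{\mf}(q)}{q} \;<\; \ft^{\mf}(f) \;=\; \frac{1}{q},
\]
so $\nu_f^{\mf}(q) < 1$, i.e.\ $\nu_f^{\mf}(q) = 0$. By the definition of $\nu_f^{\mf}(q)$ this forces $f = f^1 \in \mf^{[q]}$, completing the proof.
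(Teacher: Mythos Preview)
Your proof is correct and matches the paper's own proof essentially line for line: both directions use \Cref{defn-prop:f-threshold} for the upper bound when $f\in\mf^{[q]}$, and \Cref{prop:properties-of-ft}(4) to force $\nu_f^{\mf}(q)=0$ for the converse. Your explicit mention of the lower bound from \Cref{prop:properties-of-ft}(3) is a helpful addition the paper leaves implicit.
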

\begin{proof}
    If $f^1\in \mf^{[q]}$, the inequality $\ft^{\mf}(f)\leq \frac{1}{q}$ follows from \Cref{defn-prop:f-threshold}. Conversely, if $\ft^{\mf}(f) = \frac{1}{q}$, then $\dfrac{\nu_f^{\mf}(q)}{q} < \frac{1}{q}$, so $\nu_f^{\mf}(q) = 0$ and $f\in \mf^{[q]}$.
\end{proof}
\begin{lemma}[\cite{stacks-project}\href{https://stacks.math.columbia.edu/tag/05CK}{Tag 05CK}]\label{lemma:intersection-pure}
    Let $A\to B$ be a faithfully flat map of rings and $I\subseteq A$ an ideal. Then $IB\cap A = I$.
\end{lemma}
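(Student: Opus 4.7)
The plan is to reduce the claim to the well-known fact that a faithfully flat base change is injective on every module. Specifically, consider the short exact sequence
\[
0 \to I \to A \to A/I \to 0.
\]
Since $A \to B$ is flat, tensoring with $B$ over $A$ preserves exactness, so $(A/I) \otimes_A B \cong B/IB$ as $A$-modules.

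Next, I invoke the characterization of faithful flatness that for any $A$-module $M$, the natural map $M \to M \otimes_A B$ is injective. Applying this with $M = A/I$ shows that $A/I \to B/IB$ is injective. Now consider the composition $A \to A/I \to B/IB$; this is the same as the map $A \to B \to B/IB$, whose kernel is by definition $IB \cap A$. On the other hand, by injectivity of $A/I \to B/IB$, the kernel of the composition is exactly $I$. Equating the two descriptions gives $IB \cap A = I$.

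There is no real obstacle here; the entire argument is a bookkeeping exercise once one has the two ingredients (flatness preserves the quotient sequence, faithful flatness gives injectivity of $M \to M \otimes_A B$). The latter injectivity itself can be proved either by quoting that a faithfully flat map is pure (and pure maps are injective on tensoring), or by a quick direct check using the fact that for faithfully flat $A \to B$, an $A$-module $N$ is zero iff $N \otimes_A B = 0$, applied to the kernel $N$ of $M \to M \otimes_A B$ after observing that $N \otimes_A B$ vanishes by a splitting argument.
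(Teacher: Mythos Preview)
Your proof is correct. Note that the paper does not supply its own argument for this lemma; it simply cites the Stacks Project (Tag 05CK), where the standard proof proceeds exactly as you describe: faithful flatness implies that $M \to M \otimes_A B$ is injective for every $A$-module $M$ (equivalently, $A \to B$ is universally injective, i.e.\ pure), and applying this to $M = A/I$ gives $IB \cap A = I$.
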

\begin{lemma}\label{lemma:flat-ft}
    Let $(R,\mf)\to (S,\nf)$ be a flat local map of regular local rings. Suppose $\af,\bfr\subseteq R$ are ideals with $\af\subseteq \sqrt{\bfr}$. Then $\ft^{\bfr}(\af) = \ft^{\bfr S}(\af S)$ and $\ord_{\bfr}(\af) = \ord_{\bfr S}(\af S)$.
\end{lemma}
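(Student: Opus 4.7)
The plan is to reduce both equalities to a single containment statement: for any ideal $I\subseteq R$ and any integer $m\geq 0$, I want to show that
\[
\af^m\subseteq I \iff (\af S)^m\subseteq IS.
\]
The forward implication is immediate by extending scalars along $R\to S$. The reverse implication is where I use the regular-local-flat hypothesis. First, I would observe that a flat local map between local rings is automatically faithfully flat (since $\mf S\subseteq \nf\subsetneq S$ forces no prime of $R$ to blow up in $S$). Then \Cref{lemma:intersection-pure} gives $IS\cap R = I$, and since $\af^m\subseteq R$ and $(\af S)^m = \af^m S\subseteq IS$, I conclude $\af^m\subseteq IS\cap R = I$.

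With this reduction in hand, I would apply it in two ways. For the order statement, I would take $I = \bfr^d$ and use the trivial identity $\bfr^d S = (\bfr S)^d$ to conclude that $\af\subseteq \bfr^d$ if and only if $\af S\subseteq (\bfr S)^d$, which directly gives $\ord_\bfr(\af) = \ord_{\bfr S}(\af S)$. For the $F$-threshold statement, I would take $I = \bfr^{[p^e]}$ and use the identity $\bfr^{[p^e]} S = (\bfr S)^{[p^e]}$ (which holds because bracket powers are generated by $p^e$-th powers of any generating set and extension of scalars commutes with taking generators) to conclude $\af^t\subseteq \bfr^{[p^e]}$ if and only if $(\af S)^t\subseteq (\bfr S)^{[p^e]}$. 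This gives $\nu_\af^{\bfr}(p^e) = \nu_{\af S}^{\bfr S}(p^e)$ for every $e$, so the equality of $F$-thresholds follows by passing to the limit using \Cref{defn-prop:f-threshold}.

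There is no real obstacle here: the entire lemma is a faithful flatness descent packaged up for later use. The only minor care needed is the observation that flat local maps are faithfully flat and that bracket powers are preserved under ring extension, both of which are standard.
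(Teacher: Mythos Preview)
Your argument is correct. For the order statement, you and the paper do exactly the same thing: invoke faithful flatness via \Cref{lemma:intersection-pure} to show $\af\subseteq \bfr^t \iff \af S\subseteq (\bfr S)^t$.

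For the $F$-threshold statement, however, the paper simply cites \cite[Proposition 2.2(v)]{huneke_closure_2008}, whereas you give a direct elementary argument: the same faithful-flatness descent applied with $I=\bfr^{[p^e]}$ yields $\nu_{\af}^{\bfr}(p^e)=\nu_{\af S}^{\bfr S}(p^e)$ for every $e$, and the equality of thresholds follows by taking limits. Your route is more self-contained and makes transparent that nothing beyond faithful flatness and the compatibility $\bfr^{[p^e]}S=(\bfr S)^{[p^e]}$ is needed; the paper's citation hides this behind a black box but is shorter on the page. Both are fine, and yours has the advantage of unifying the two claims under a single mechanism.
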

\begin{proof}
The first claim follows from \cite[Proposition 2.2 (v)]{huneke_closure_2008}. For the second, let $t > 0$. \Cref{lemma:intersection-pure} implies
    \[
    \af\subseteq \bfr^t\implies \af S\subseteq \bfr^t \implies \af S\cap R\subseteq \bfr^t\cap R \iff \af\subseteq \bfr^t,
    \]
    so $\ord_{\bfr}(\af) = \ord_{\bfr S}(\af S)$.
\end{proof}
\begin{lemma}\label{lemma:weierstrass-preparation}
    Let $k$ be an algebraically closed field of characteristic $p>0$. Let $R = k\llbracket z,w\rrbracket$ and set $\mf = (z,w)$. If $f\in \mf^d\setminus \mf^{d+1}$ and $\fpt(f) = \frac{1}{d}$, then there exists:
    \begin{itemize}
        \item A regular system of parameters $x, y$ for $R$;
        \item A degree-$d$ monic polynomial $P$ in $ k\llbracket x\rrbracket[y]$;
        \item A unit $u$ in $ R^\times$
    \end{itemize} 
    such that $f = uP$.
\end{lemma}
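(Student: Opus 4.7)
The plan is to apply the Weierstrass preparation theorem after a preliminary linear change of coordinates. I note first that the hypothesis $\fpt(f) = \frac{1}{d}$ actually plays no role in the argument; only the order condition $f \in \mf^d \setminus \mf^{d+1}$ is used.

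The preparatory step is the choice of coordinates. Let $F \in k[z,w]$ denote the initial form of $f$, which is a nonzero homogeneous polynomial of degree $d$. Then $F(z,1) \in k[z]$ is nonzero (otherwise $F$ would be divisible by every power of $w$, forcing $F=0$), so it has finitely many roots. Since $k$ is infinite, I can choose $\alpha \in k$ with $F(\alpha,1) \neq 0$. Set $x := z - \alpha w$ and $y := w$; then $\mf = (x,y)$, so $\{x,y\}$ is a regular system of parameters for $R$.

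In these new coordinates, the substitution $x \mapsto 0$ corresponds to $z \mapsto \alpha y$ and $w \mapsto y$, so by homogeneity the degree-$d$ component of $f(0,y) \in k\llbracket y\rrbracket$ equals $F(\alpha,1)\, y^d$, a nonzero scalar multiple of $y^d$. All terms of $f$ of total degree less than $d$ vanish by the order assumption, so $f(0,y)$ has order exactly $d$ in $k\llbracket y\rrbracket$; in other words, $f$ is $y$-regular of order $d$. The Weierstrass preparation theorem for $k\llbracket x,y\rrbracket$ then yields $u \in R^\times$ and a monic $P \in k\llbracket x\rrbracket[y]$ of degree $d$ with $f = uP$, completing the proof. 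There is no substantive obstacle here: the only technical step is picking $\alpha$ outside the finite zero locus of $F(z,1)$, which is possible over any infinite field.
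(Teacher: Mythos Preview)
Your proof is correct and, as you observe, establishes the lemma without using the hypothesis $\fpt(f)=\frac{1}{d}$. The paper's route is slightly different: it first invokes \Cref{prop:properties-of-ft}(7) and \cite[Remark 3.2]{kadyrsizova_lower_2022} to deduce from $\fpt(f)=\frac{1}{d}$ that the initial form $f_d$ is the $d$th power of a single linear form $\ell$, then takes $y=\ell$ so that $f=y^d+f_{>d}$ before applying Weierstrass preparation. Your argument bypasses this by choosing a generic direction so that the initial form does not vanish along it, which only needs $k$ infinite. Your approach is more elementary and yields a strictly stronger statement; the paper's approach is not needed for the lemma as stated, though the fact that $f_d=\ell^d$ is a natural intermediate observation consistent with the eventual classification (and the downstream use in \Cref{lemma:dim-2-alg-closed} requires only that $P$ be monic of degree $d$ with $P\in\mf^d$, which your argument also delivers).
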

\begin{proof}
Write $f = f_d + f_{> d}$, where $f_d$ is homogeneous of degree $d$ and $f_{>d}\in \mf^{d+1}$. By \Cref{prop:properties-of-ft} (7), we have $\fpt(f_d) \leq \fpt(f)$. As the lower bound $\frac{1}{d} = \frac{1}{\ord_\mf(f_d)}
\leq \fpt(f_d)$ of \Cref{prop:properties-of-ft} (3) still applies, we have $\fpt(f_d) = \frac{1}{d}$. By \cite[Remark 3.2]{kadyrsizova_lower_2022}, we have $f_d = \ell^d$ for some homogeneous linear form $\ell$. Choosing a new regular system of parameters $x,y$ for $R$ such that $\ell = y$, we have $f = y^d + f_{>d}$. 

Write $f = a_0 + a_1y + \dots$ as a power series in $k\llbracket x\rrbracket\llbracket y\rrbracket$. As $f-y^d\in \mf^{d+1}$, it follows that $a_0,\dots, a_{d-1}\in \mf$ and $a_d\equiv 1 \mod\mf$. The claim now follows from the Weierstrass preparation theorem \cite[Theorems IV.9.1, IV.9.2]{lang_algebra_2002}.

\end{proof}
\begin{lemma}\label{lemma:s-root}
    Let $L$ be a field. Let $f$ be an element of $L\llbracket x\rrbracket$ with $x\nmid f$. Let $u,s$ be positive integers such that $s$ is not a multiple of $\char k$. If $g\in L\llbracket x^{1/u}\rrbracket$ such that $g^s = f$, then $g\in L\llbracket x\rrbracket$.
\end{lemma}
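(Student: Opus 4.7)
The plan is to translate everything into the power series ring $L\llbracket y\rrbracket$ where $y := x^{1/u}$, splitting $g$ into the part supported on exponents divisible by $u$ and the complementary part, then derive a contradiction at the lowest ``bad'' degree using that $s$ is invertible in $L$.

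Concretely, I would first reduce the statement to the following: if $g\in L\llbracket y\rrbracket$ with $g(0)\ne 0$ and $g^s \in L\llbracket y^u\rrbracket$, then $g \in L\llbracket y^u\rrbracket$. The condition $g(0)\ne 0$ follows from $x\nmid f$, since $g(0)^s = f(0) \ne 0$. Next, I would write $g = A + B$, where $A \in L\llbracket y^u\rrbracket$ collects all monomials of $g$ whose exponent is a multiple of $u$, and $B$ collects the remaining monomials. Note that $A(0) = g(0)$ is a unit in $L$, so $A$ is a unit in $L\llbracket y\rrbracket$. The goal becomes $B = 0$.

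Expand the binomial
\[
g^s - A^s \;=\; \sum_{i=1}^s \binom{s}{i} A^{s-i} B^i.
\]
The left-hand side lies in $L\llbracket y^u\rrbracket$. Suppose, for contradiction, that $B \ne 0$, and let $m$ be its $y$-adic valuation, with leading coefficient $c_m\ne 0$; by construction $u\nmid m$. Since $A$ is a unit and $s$ is nonzero in $L$ (this is precisely where the hypothesis $\char(L)\nmid s$ is used), the $i=1$ summand $sA^{s-1}B$ has $y$-adic valuation exactly $m$, with leading coefficient $sA(0)^{s-1}c_m \ne 0$. The $i\ge 2$ summands have $y$-adic valuation $\ge 2m > m$, regardless of whether the binomial coefficients $\binom{s}{i}$ happen to vanish in $L$. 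Consequently the coefficient of $y^m$ in $g^s - A^s$ is $sA(0)^{s-1}c_m \ne 0$, which contradicts $g^s - A^s \in L\llbracket y^u\rrbracket$ since $u\nmid m$.

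I do not anticipate a substantial obstacle; the main subtleties are bookkeeping ones, namely (i) ensuring that we track the $y$-adic valuation of the $i=1$ term precisely rather than just bounding it, and (ii) being careful that the possible vanishing of higher binomial coefficients in $L$ is harmless because those terms contribute only in degrees $\ge 2m$. The hypothesis that $s$ is coprime to the characteristic is sharp: in characteristic $p$ it is necessary to rule out examples such as $g = y$, $s = p$, $u = p$, $f = x$.
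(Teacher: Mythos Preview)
Your proof is correct and follows essentially the same idea as the paper's: both isolate the minimal exponent $m$ with $u\nmid m$ and nonzero coefficient, and show that the degree-$m$ coefficient of $g^s$ equals $s\,g(0)^{s-1}c_m\neq 0$; your $A+B$ decomposition and binomial expansion are simply a tidier way to organize the same computation the paper does by direct coefficient comparison. One small slip in your closing remark: the example $g=y$, $s=u=p$, $f=x$ violates the hypothesis $x\nmid f$; take instead $g=1+x^{1/p}$, $s=p$, $f=1+x$.
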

\begin{proof}
    Write $g = a_0 + a_1x^{1/u} + \dots \in L\llbracket x^{1/u}\rrbracket$. As $x\nmid f$, we have $a_0 \neq 0$. If $a_i = 0$ for all $u\nmid i$, then $g\in L\llbracket x\rrbracket$. Otherwise, for the sake of contradiction, let $i$ be minimal such that $u\nmid i$ and $a_i\neq 0$. By equating the coefficients of $x^{i/u}$ in $g^s$ and in $f$, we obtain
    \[
    0 = s a_0^{s-1}a_i.
    \]
    We assumed that $s$ is not a multiple of $\char k$, so $s$ is a unit in $k$, which contradicts the assumption that $a_0 a_i\neq 0$.
\end{proof}
\begin{lemma}\label{lemma:root-vanishing}
    Let $k$ be a field and $T = k\llbracket t\rrbracket$. Let $\theta_1,\dots, \theta_d$ be elements of $T$ and consider $f = (y-\theta_1)\dots (y-\theta_d)$ as an element of $T[y]$. Let $\ell\geq 1$. If $f\in (y, t^\ell)^d$, then $\ord_t(\theta_i)\geq \ell$ for all $1\leq i\leq d$. 
\end{lemma}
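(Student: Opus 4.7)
My strategy is to translate the ideal membership into a lower bound on the $t$-adic valuations of the elementary symmetric polynomials of $\theta_1, \ldots, \theta_d$, then derive the claim by inspecting the unique minimum-valuation monomial in one of these elementary symmetric polynomials.

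First I would expand $f$ via Vieta's formulas as $f = y^d + \sum_{j=1}^d (-1)^j e_j(\theta_1, \ldots, \theta_d) y^{d-j}$, where $e_j$ is the $j$-th elementary symmetric polynomial. Since $(y, t^\ell)^d \subseteq T[y]$ is generated by the monomials $y^{d-j} t^{\ell j}$ for $0 \leq j \leq d$, writing a general element of $(y, t^\ell)^d$ as a $T[y]$-combination of these generators and collecting coefficients of powers of $y$ shows that $f \in (y, t^\ell)^d$ if and only if $\ord_t(e_j(\theta_1, \ldots, \theta_d)) \geq \ell j$ for every $1 \leq j \leq d$.

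Next I would assume for contradiction that some $\theta_i$ satisfies $\ord_t(\theta_i) < \ell$ and reindex so that $v := \ord_t(\theta_1) = \cdots = \ord_t(\theta_r)$ is the strict minimum among $\ord_t(\theta_1), \ldots, \ord_t(\theta_d)$; in particular $v < \ell$ and $\theta_1, \ldots, \theta_r$ are all nonzero. Writing $\theta_i = u_i t^{\ord_t(\theta_i)} + \cdots$ for each nonzero $\theta_i$ with $u_i \in k^\times$, the product $\theta_1 \cdots \theta_r$ has leading term $u_1 \cdots u_r \cdot t^{rv}$ with $u_1 \cdots u_r \neq 0$, so has $t$-adic valuation exactly $rv$. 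Every other monomial $\theta_{i_1} \cdots \theta_{i_r}$ appearing in $e_r$ has index set $\{i_1, \ldots, i_r\} \neq \{1, \ldots, r\}$, hence contains some factor $\theta_{i_k}$ with $i_k > r$, so is either zero or has valuation strictly greater than $rv$. Therefore $\ord_t(e_r) = rv < r\ell$, contradicting the valuation inequality derived above.

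I do not anticipate a substantive obstacle; the argument amounts to the standard Newton-polygon fact that a monic polynomial $y^d + \sum_j c_j y^{d-j}$ over $T$ whose coefficients satisfy $\ord_t(c_j) \geq \ell j$ has all roots of valuation at least $\ell$. The step requiring the most care is the initial translation, where one must verify that cancellation among the multiple generators of $(y, t^\ell)^d$ cannot produce a coefficient of $y^{d-j}$ with valuation strictly less than $\ell j$.
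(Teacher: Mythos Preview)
Your proposal is correct and follows essentially the same route as the paper: both reorder the roots so that the first $r$ attain the minimum valuation, observe that $e_r(\theta_1,\dots,\theta_d)$ (the paper's $g_r$, up to sign) has $t$-adic order exactly $r\cdot\ord_t(\theta_1)$ because $\theta_1\cdots\theta_r$ is the unique term of smallest valuation, and compare this with the bound $\ord_t(e_r)\ge \ell r$ coming from the ideal membership. The only cosmetic difference is that the paper argues directly rather than by contradiction.
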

\begin{proof}
    Write $f = y^d + g_1y^{d-1} + \dots + g_d$ where $g_1,\dots, g_d\in T$. Re-order the roots $\theta_1,\dots, \theta_d$ such that 
    \[\ord_t(\theta_1) = \dots = \ord_t(\theta_r) < \ord_t(\theta_{r+1})\leq \dots \leq \ord_t(\theta_d).\]
    As $f\in (y, t^\ell)^d$, we have $y^{d-r}g_r \in (y, t^\ell)^d$ and hence $g_r\in (y, t^\ell)^{d-r}\cap T = t^{\ell r}T$. We have $g_r = \theta_1\dots \theta_r + \text{higher order terms}$, so $\ord_t(g_r) = \ord_t(\theta_1\dots\theta_r) = r\ord_t(\theta_1)$, so for all $1\leq i\leq d$ we have $\ord_t(\theta_i) \geq \ord_t(\theta_1) \geq \frac{\ell r}{r} = \ell$.
\end{proof}
\begin{lemma}\label{lemma:dim-2-alg-closed}
    Let $k$ be an algebraically closed field of characteristic $p > 0$. Let $R = k\llbracket x,y\rrbracket, \mf = (x,y)$. Let $d$ be a positive integer and $f$ an element of $ R$ such that $\ord_\mf(f) = d,\fpt(f) = \frac{1}{d}$. Write $d = qs$, where $q$ is a power of $p$ and $\gcd(p,s)=1$. Then there exists $g$ in $ \mf^{[q]}$ such that $g^s = f$.
\end{lemma}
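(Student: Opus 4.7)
The plan is to reduce the problem to the two-dimensional critical-point framework of Corollary \ref{cor:critical-point-computes-ft} and then descend an $s$-th root to $R$. First, by Lemma \ref{lemma:weierstrass-preparation}, after changing the regular system of parameters I may write $f = u\cdot P$ with $u \in R^\times$ and $P = y^d + a_1 y^{d-1} + \cdots + a_d$ a Weierstrass polynomial whose coefficients lie in $(x)\,k\llbracket x\rrbracket$; because $\fpt$ and $\ord_\mf$ are invariant under unit multiplication, I replace $f$ by $P$.

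Next I pass to a finite flat local extension $R \hookrightarrow S = k\llbracket t,y\rrbracket$ via $x = t^u$ chosen so that $f$ factors over $S$ as $\prod_{i=1}^r (y-\phi_i)^{t_i}$ with $\phi_i \in k[t]$ distinct \emph{polynomials} and $\sum_i t_i = d$; the existence of such a splitting with polynomial (not merely power-series) roots is the content of \cite[Proposition 5.1]{sato_accumulation_2023}. Lemma \ref{lemma:flat-ft} preserves the invariants, giving $\ord_{\mf S}(f) = d$ and $\ft^{\mf S}(f) = 1/d$. Since $\mf S = (t^u, y)$, setting $\bfr := (y, t^u)$ and $\ell := u$ places me exactly in the setting of Question \ref{question:polynomial_fractal}, and Lemma \ref{lemma:root-vanishing} applied to $T = k\llbracket t\rrbracket$ forces $\phi_i \in (t^u)\,k[t]$, as required.

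The central claim is that $s \mid t_i$ for every $i$. This is automatic when $r = 1$, so suppose $r \geq 2$ and let $D := \max_i \deg \phi_i \geq 1$. Because $1/d = \ft^\bfr(f) < (1 + u/D)/d$, Corollary \ref{cor:critical-point-computes-ft} produces a unique critical point $\cb = \ab/q''$ (with $\ab \in \Z_{\geq 0}^r$ and $q''$ a power of $p$) computing $\ft^\bfr(f) = \max_i c_i/t_i = 1/d$; hence $c_i \leq t_i/d$ with equality for at least one index. Combining $\norm{\cb} \geq 1$ from Lemma \ref{lemma:norm-of-upper-region} with $\sum_i t_i/d = 1$ forces $c_i = t_i/d$ \emph{exactly} for every $i$. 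After replacing $q''$ by a larger power of $p$ so that $q \mid q''$, integrality of $a_i = q'' t_i/d$ gives $qs \mid q'' t_i$, and $\gcd(s, q'') = 1$ yields $s \mid t_i$.

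Define $g_0 := \prod_i (y - \phi_i)^{t_i/s} \in S$; then $g_0^s = f$. To descend $g_0$ to $R$, view $f \in k(y)\llbracket x\rrbracket$ and $g_0 \in k(y)\llbracket x^{1/u}\rrbracket$: the image of $f$ modulo $x$ is $y^d \neq 0$ in $k(y)$, so $x \nmid f$, and since $p \nmid s$, Lemma \ref{lemma:s-root} forces $g_0 \in k(y)\llbracket x\rrbracket$. Reading this back inside $S$ shows that only integer powers of $x^{1/u}$ occur in $g_0$, whence $g_0 \in k\llbracket x,y\rrbracket = R$. Finally $\ord_\mf(g_0) = d/s = q$ and $\fpt(g_0) = s\cdot \fpt(f) = 1/q$ by Proposition \ref{prop:properties-of-ft}(2), so Lemma \ref{lemma:degree-q-case} concludes $g_0 \in \mf^{[q]}$. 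I expect the main obstacle to be the critical-point step: obtaining the \emph{sharp} equality $c_i = t_i/d$ rather than just $c_i \leq t_i/d$ is what yields the integrality constraint that in turn delivers the divisibility $s \mid t_i$.
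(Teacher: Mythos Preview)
Your outline follows the paper's strategy closely, but there is a genuine gap in the second paragraph. You assert that \cite[Proposition 5.1]{sato_accumulation_2023} furnishes a factorization of $f$ over $S=k\llbracket t,y\rrbracket$ with \emph{polynomial} roots $\phi_i\in k[t]$. It does not: after passing to the integral closure of $k\llbracket x\rrbracket$ in a splitting field (which, using $k=\bar k$, one may identify with $k\llbracket t\rrbracket$ and arrange $x=t^\ell$), the roots $\theta_i$ of the Weierstrass polynomial $f$ are only power series in $t$. Sato's result is a perturbation statement: there exists $D$ such that replacing each $h_i=y-\theta_i$ by its truncation $\tilde h_i=y-\tilde\theta_i$ modulo $\nf^D$ leaves the test ideals $\tau(U,\hb^{\tb})$ unchanged for $\tb\in[0,1)^r$, and hence leaves $\ft^{\bfr}$ unchanged. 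The critical-point machinery of Corollary~\ref{cor:critical-point-computes-ft} is only set up for polynomial $g_i$ (the bound $D=\max_i\deg g_i$ is what makes Lemma~\ref{lemma:unique-critical-point} work), so one must apply it to $\tilde f:=\prod_i\tilde h_i^{t_i}$, not to $f$, and from there deduce $s\mid t_i$.

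The error surfaces concretely when you set $g_0:=\prod_i(y-\phi_i)^{t_i/s}$ and claim $g_0^s=f$: if the $\phi_i$ are the truncated roots, this gives $g_0^s=\tilde f\neq f$. The repair is to carry the divisibility $s\mid t_i$ back to the original factorization and take $g_0:=\prod_i(y-\theta_i)^{t_i/s}$ with the genuine power-series roots $\theta_i$; this $g_0$ does satisfy $g_0^s=f$, is still a polynomial in $y$ with coefficients in $k\llbracket t\rrbracket$, and your descent via Lemma~\ref{lemma:s-root} goes through unchanged. A smaller point in the same paragraph: writing the extension as $x=t^u$ rather than $x=(\text{unit})\cdot t^\ell$ already uses $k=\bar k$ to extract an $\ell$-th root of that unit, and this should be made explicit.
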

\begin{proof}
By \Cref{lemma:weierstrass-preparation}, we may assume $f$ is a monic polynomial of degree $d$ in $k\llbracket x\rrbracket[y]$. Set $S = k\llbracket x\rrbracket$ and $K = \text{Frac}(S)$. Every monic polynomial in $S[y]$ factors completely over $\bar{K}$, so we may write 
\[
f = (y - \theta_1)^{e_1}\dots (y-\theta_r)^{e_r}
\]
for some $\theta_1,\dots, \theta_r$ distinct roots in $\bar{K}$; unlike the previous lemma, we will count roots with multiplicity.

Let $T$ denote the integral closure of $S$ in $K(\theta_1,\dots, \theta_r)$. By \cite[Theorem 4.3.4]{huneke_integral}, $T$ is a complete local domain and $S\to T$ is finite. As $T$ is a $1$-dimensional complete normal ring of equal characteristic, the Cohen structure theorem gives $T\cong k'\llbracket t\rrbracket$ for some field extension $k'/k$. Furthermore, the restriction of $S\to T$ to $k\to k'$ is finite and $k$ is algebraically closed, so $k = k'$. 

We define $\ell := \ord_t(xT)$, and note that by \Cref{lemma:root-vanishing} that $t^\ell\mid \theta_i$ for all $1\leq i\leq r$. Set $U:= T\llbracket y\rrbracket = k\llbracket y, t\rrbracket$. As $R\hookrightarrow U$ is a finite local map between regular local rings of the same dimension, $R\to U$ is flat by the miracle flatness lemma \cite[\href{https://stacks.math.columbia.edu/tag/00R4}{Tag 00R4}]{stacks-project}. Set $\bfr = \mf U = (y, t^\ell)U$. As $R\to U$ is flat, we have $\fpt(f) = \ft^{\mf}(f) = \ft^{\bfr}(f) = \frac{1}{d}$ by \Cref{lemma:flat-ft}. Write $d = qs$, where $s$ is coprime to $p$. We aim to show that there exists $g$ in $ U$ such that $g^s = f$. If $r = 1$ this is clear; we simply have $f = h_1^{qs}$ and $g = h_1^q$. Otherwise, suppose $r\geq 2$.

Let $\nf = (y,t)\subseteq U$. For all $1\leq i\leq r,$ we have $y-\theta_i\in \nf\setminus \nf^2$, so $h_i:= y - \theta_i$ is irreducible. As argued in \cite[Corollary 5.2]{sato_accumulation_2023}, the pair $(U, (h_1\dots h_r)^1)$ is sharply $F$-pure away from $\nf$, so by \cite[Theorem 5.1]{sato_accumulation_2023} there exists $D$ in $ \Z^+$ such that for all $t_1,\dots, t_r$ in $ [0, 1)$ and $\tilde{h},\dots, \tilde{h}_r$ such that $h_i\equiv \tilde{h}_i\mod \nf^D$, we have
\begin{equation}\label{eqn:sato-truncation}
\tau(U, h_1^{t_1}\dots h_r^{t_r}) = \tau(U, \tilde{h}^{t_1}\dots\tilde{h}_r^{t_r}).
\end{equation}
 Without loss of generality, choose $D$ to be larger than $\ord_{\nf}(\theta_i-\theta_j)$ for all $1\leq i<j\leq r$; in particular this value of $D$ satisfies $D\geq \ell$. For $1\leq i\leq r$, let $\tilde{\theta_i}$ in $ k[t]$ denote the truncation of the power series $\theta_i$ at the $D$th term; we have $\theta_i \equiv \tilde{\theta_i}\mod \nf^D$ and $t^\ell\mid \tilde{\theta_i}$. Set $\tilde{h_i} = y - \tilde{\theta_i}$ and $\tilde{f} = \tilde{h_1}^{e_1}\dots \tilde{h_r}^{e_r}$. By assumption that $r\geq 2$ we have $e_i \leq d-1$ for all $1\leq i\leq r$, hence $ce_i\in [0,1)$ for all $c$ in $[0, \frac{1}{d}]$. By \Cref{eqn:sato-truncation}, for all $c\in [0, \frac{1}{d}],$ we have
\[
\tau(R, f^c) = \tau(R, h_1^{ce_1}\dots h_r^{ce_r}) = \tau(R, \tilde{h_1}^{ce_1}\dots \tilde{h_r}^{ce_r}) = \tau(R, \tilde{f}^c). 
\]
As $\ft^{\bfr}(f) = \frac{1}{d}$, it follows that $\tau(R, \tilde{f}^c)\not\subseteq \bfr$ for all $0\leq c < \frac{1}{d}$ and $\tau(R, \tilde{f}^{1/d}) \subseteq \bfr$, so $\ft^{\bfr}(\tilde{f}) = \frac{1}{d}$.

By our choice of $D$, the factors $\tilde{h_1},\dots, \tilde{h_r}$ are distinct. In the notation of \Cref{defn:regions}, write $\tilde{f} = \tilde{\hb}^{\eb} = \tilde{h_1}^{e_1}\dots \tilde{h_r}^{e_r}$ with $e_i>0$. As $\ft^{\bfr}(\tilde{f})  = \frac{1}{\norm{\eb}}< \frac{1 + \ell/D}{\norm{\eb}}$, by \Cref{cor:critical-point-computes-ft}, there exists a critical point $\cb = \frac{\ab}{q_0} \leq \frac{1 + \ell/D}{\norm{\eb}}\eb$ such that $\ft^{\bfr}(\tilde{f}) = \max_{1\leq i\leq r}\frac{c_i}{e_i}$. We then have
     \begin{equation}\label{eqn:holder}
           1 = \norm{\eb}\cdot\ft^{\bfr}(\tilde{f}) =
           \sum_{i=1}^re_i\max\left(\frac{c_1}{e_1},\dots, \frac{c_r}{e_r}\right)\geq \sum_{i=1}^r c_i = \norm{\cb}. 
    \end{equation}
    By \Cref{lemma:norm-of-upper-region}, we have $\norm{\cb} \geq 1$, so the inequality in \Cref{eqn:holder} is an equality. In particular, $c_i = e_i\max(\frac{c_1}{e_1},\dots, \frac{c_r}{e_r})$ for all $1\leq i\leq r$, so $\frac{c_i}{e_i} = \max(\frac{c_1}{e_1},\dots, \frac{c_r}{e_r})$ for all $1\leq i\leq r$, hence $\cb = \ft^{\bfr}(\tilde{f}) \eb$. For all $1\leq i\leq r,$ we conclude that $\frac{e_i}{d} = \frac{a_i}{q_0}$. Recall that $d = qs$, where $s$ is coprime to $p$. Then $e_iq_0 = a_isq$, so we conclude that $s\mid e_i$. Consequently, $f$ has an $s$th root $g$ in $U$: similar to the $r=1$ case, we take $g = h_1^{e_1/s}\dots h_r^{e_r/s}$. 
    
    Recall that $x = ut^\ell$ for some unit $u$ in $ U^\times$. As $U$ is a power series ring over the algebraically closed field $k$,  we have $u^{1/\ell}\in U$. Consequently, $x^{1/\ell} = tu^{1/\ell}$ is conjugate to $t$, hence $U = k\llbracket y, x^{1/\ell}\rrbracket$. Set $L = \text{Frac}(k\llbracket y\rrbracket)$ and consider $f$ as an element of $ L\llbracket x\rrbracket$. As $g\in U\subseteq L\llbracket x^{1/\ell}\rrbracket$, it follows from \Cref{lemma:s-root} that $g\in L\llbracket x\rrbracket\subseteq \text{Frac}(R)$. As $R$ is integrally closed and $g$ is integral over $R$, it follows that $g\in R$.

    As $f = g^s$, by \Cref{prop:properties-of-ft} (2) we have $\ft^{\mf}(f) = \frac{1}{s}\ft^{\mf}(g)$. Additionally, we have $\ord_\mf(g) = \frac{1}{s}\ord_{\mf}(f)$, so $\ft^{\mf}(g) = \frac{1}{q} = \frac{1}{\ord_{\mf}(g)}$. By \Cref{lemma:degree-q-case}, we conclude that $g\in \mf^{[q]}$, proving the claim.
    \end{proof}

\begin{remark}
Using Schwede's results on centers of $F$-purity, one can give a short proof of \Cref{lemma:dim-2-alg-closed} in the special case that $d$ is coprime to $p$. Assume that $\fpt(f) = \frac{1}{d}$ and that $d$ is coprime to $p$; then $(p^e-1)\frac{1}{d}\in \Z^+$ for some $e > 0$. By \cite[Theorem 4.9]{hernandez_f-purity_2012}, the pair $(R, f^{1/d})$ is sharply $F$-pure but not strongly $F$-regular. Let $\pf$ be a center of sharp $F$-purity for $(R, f^{1/d})$ as in \cite{schwede_centers_2010}, which is a nonzero prime ideal by \cite[Proposition 4.6]{schwede_centers_2010}. We conclude $f^{\frac{p^e-1}{d}} \in (\pf^{[p^e]} : \pf)$ by \cite[Propositions 3.11 and 4.7]{schwede_centers_2010}. 

If $\codim(\pf) = 1$, then we have $\pf = (g)$ for some irreducible element $g$ in $R$. In this case, we have $f^{\frac{p^e-1}{d}} \in g^{p^e-1}$. As $p^e-1=\ord_\mf(f^{\frac{p^e-1}{d}}) \geq (p^e - 1)\ord_\mf(g)$, we must have $\ord_\mf(g) = 1$, so $q = 1$ and $g\in \mf^{[q]}$. By unique factorization of $f$ we have that $f = ug^d$ for some unit $u$ in $R$. As $k$ is algebraically closed, $u$ admits a $d$th root in $R$, proving the claim in this case. To finish the proof, we'll show that $\codim(\pf) \neq 2$. Suppose for the sake of contradiction that $\pf = \mf$. Then $f^{\frac{p^e-1}{d}}\in (x_1^{p^e}, x_2^{p^e}, (x_1x_2)^{p^e-1})\subseteq \mf^{p^e}$. But this contradicts the fact that $\ord_\mf(f^{\frac{p^e-1}{d}}) = p^e-1$, proving the claim.
\end{remark}
\subsection{Step (2)}
For this step, we require a local Bertini theorem due to Flenner \cite{flenner_bertini_1977}. 
\begin{lemma}[\cite{flenner_bertini_1977}, Satz 2.1]\label{lemma:flenner-Bertini}
     Let $(A, \mf)$ be a complete local ring with coefficient field $k$. Let $\Jf = (z_1,\dots, z_l)\subseteq A$ is a proper ideal. Suppose that $Q = \{\qf_1,\dots, \qf_r\}$ is a finite set of prime ideals of $D(\Jf)$. Then there exists $x$ in $\Jf$ such that:
     \begin{enumerate}[(i)]
         \item  $x\equiv z_1\mod (z_2,\dots, z_l) + \mf\Jf$;
         \item $ x\notin \qf_1\cup \dots \cup\qf_r$;
         \item $x\notin \pf^{(2)}$ for all $x\in D(\Jf)$.
     \end{enumerate}
\end{lemma}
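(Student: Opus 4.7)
The strategy is to parametrize elements $x \in \Jf$ satisfying (i) as a family over an affine space and show that the conditions (ii) and (iii) each hold on the complement of a proper closed subset of the parameter space. Any $x$ satisfying (i) can be written as $x = z_1 + \lambda_2 z_2 + \dots + \lambda_l z_l + w$ with $\lambda_i \in A$ and $w \in \mf\Jf$. My first step would be to pass by Cohen structure to a faithfully flat extension of $A$ with an infinite coefficient field $k$, so that we may restrict the coefficients $\lambda_i$ to $k$ and work with the parameter space $\A^{l-1}_k$; the hypotheses and the desired conclusion both descend along such a faithfully flat extension.

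Condition (ii) is then standard prime avoidance: since $\Jf \not\subseteq \qf_j$, at least one $z_i$ has nonzero image in $A/\qf_j$, so the locus of $(\lambda_2, \dots, \lambda_l) \in \A^{l-1}_k$ with $x \in \qf_j$ is cut out by a nontrivial $k$-affine condition, and the union over finitely many $j$ is still a proper closed subset.

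The content of the lemma lies in condition (iii). For a fixed $\pf \in D(\Jf)$, the hypothesis $\Jf \not\subseteq \pf$ guarantees that some $z_i$ has nonzero image in $A_\pf$; tracing this through the cotangent space $\pf A_\pf / \pf^2 A_\pf$, the condition $x \in \pf^{(2)}$ becomes a nontrivial linear condition on $(\lambda_2, \dots, \lambda_l)$, either because $x$ fails to lie in $\pf$ at all for generic $\lambda$, or because after reducing to the case $x \in \pf$ some $z_i$ still contributes nontrivially to the cotangent space. The main obstacle, and what Flenner's argument handles carefully, is making this pointwise condition uniform over the infinite family $\pf \in D(\Jf)$. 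My approach would be to invoke the module of K\"ahler differentials $\Omega_{A/k}$ together with a Jacobian criterion, stratifying $D(\Jf)$ into finitely many locally closed subsets on each of which the ``bad'' set of coefficient tuples is constructible of positive codimension in $\A^{l-1}_k$; the union remains a proper subset, and since $k$ is infinite, a generic choice of $(\lambda_2, \dots, \lambda_l)$ meets all three conditions simultaneously.
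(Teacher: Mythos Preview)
The paper does not reprove Flenner's result; its ``proof'' consists of citing Satz~2.1 of Flenner, which already supplies an element $x = x_1 + a_2 x_2 + \dots + a_n x_n$ (for a chosen generating set $x_1, \dots, x_n$ of $\Jf$ and coefficients $a_i$ in a suitable set $S \subseteq A$) satisfying (ii) and (iii). The paper's sole addition is the observation that if one takes the generating set to be $z_1, \dots, z_l$ together with all products $z_i y_j$, where $y_1, \dots, y_m$ generate $\mf$, then the resulting $x$ automatically satisfies the congruence (i), since each $z_i y_j$ lies in $\mf\Jf$. You, by contrast, are sketching a proof of Flenner's theorem itself.

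Your sketch has a concrete gap: the parameter space $\A^{l-1}_k$ is too small. By varying only $(\lambda_2, \dots, \lambda_l) \in k^{l-1}$ and not the $\mf\Jf$ contribution $w$, you may violate (iii) for every member of your family. Take $k$ perfect of characteristic $2$, $A = k\llbracket u, v\rrbracket$, and $\Jf = (z_1, z_2) = (u^2, v^2)$. Every element of your family is $x = u^2 + \lambda v^2 = (u + \mu v)^2$ with $\mu^2 = \lambda$; for the height-one prime $\pf = (u + \mu v)$ one has $v^2 \notin \pf$ (so $\pf \in D(\Jf)$) while $x \in \pf^2 \subseteq \pf^{(2)}$. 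Thus (iii) fails for \emph{every} $\lambda \in k$. Flenner's larger family, which is exactly what the paper invokes, permits a perturbation such as $x = u^2 + v^2 + v^3$ (here $v^3 = v\cdot z_2 \in \mf\Jf$); this element is irreducible in $k\llbracket u,v\rrbracket$ and hence lies in no $\pf^{(2)}$. The generators $z_i y_j$ are therefore not a convenience but a necessity. Once you include them you have essentially reconstructed Flenner's setup, and the substantive work of handling the infinitely many primes in (iii) still remains; your one-sentence appeal to a Jacobian stratification does not substitute for Flenner's actual argument.
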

\begin{proof}
While Flenner does not explicitly note the congruence condition on $x$, we can specify one detail of the construction to ensure (i) holds. In Flenner's notation, we assume $y_1,\dots, y_m$ is a generating set for $\mf$. We then let $x_1,\dots, x_n$ denote the set of elements $z_1,\dots, z_l, z_1y_1,\dots, z_ly_m$ with $x_i = z_i$ for $1\leq i\leq l$. With $S\subseteq A$ as in loc. cit., there exist $a_2,\dots, a_n$ in $S$ such that $x := x_1 + a_2x_2+\dots + a_nx_n$ satisfies (ii) and (iii). Noting that $x\equiv z_1 + a_2z_2 + \dots + a_lz_l \mod \mf \Jf$, the claim (i) follows.
\end{proof}

The following lemma is essentially an application of \cite[Korollar 3.5]{flenner_bertini_1977}, where we replace Satz 2.1 in op. cit. with our modification \Cref{lemma:flenner-Bertini}. Although Flenner's argument works for us \textit{mutatis mutandis}, because the work op. cit. is written in German, we spell out the changes explicitly for the reader's convenience.
\begin{lemma}\label{lemma:reduced-bertini-flenner}
    Let $k$ be an infinite field. Let $n\geq 3, R = k\llbracket x_1,\dots, x_n\rrbracket, \mf = (x_1,\dots, x_n)$. Let $f$ be an element of $R$ with $\ord_\mf(f) = d > 0$. Let $s > 0$ and suppose that $f$ does not have an $s$th root in $R$. There exists $x$ in $\mf\setminus \mf^2$ such that the image $\bar{f}$ of $f$ in $R/xR$ does not have an $s$th root in $R/xR$ and $\ord_{\mf/(x)}(\bar{f}) = d$.
\end{lemma}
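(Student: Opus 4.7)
The plan is to apply the modified Flenner Bertini theorem (\Cref{lemma:flenner-Bertini}) to produce a single $x \in \mf \setminus \mf^2$ under which the UFD factorization of $f$ in $R$ descends cleanly to a factorization of $\bar f$ in $R/xR$, witnessing the failure of $\bar f$ to be an $s$th power. Since $R$ is a regular local ring and hence a UFD, I may write $f = u \prod_{i=1}^t f_i^{e_i}$ with $u \in R^\times$ and the $f_i$ pairwise non-associate irreducibles. The hypothesis that $f$ has no $s$th root splits into two cases: (A) some exponent $e_i$ is not divisible by $s$, or (B) every $e_i$ is divisible by $s$ but $u$ itself is not an $s$th power in $R^\times$.

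I aim to find $x \in \mf \setminus \mf^2$ satisfying four conditions: (P1) $\ini_\mf(x) \nmid \ini_\mf(f)$ in $\operatorname{gr}_\mf R \cong k[x_1,\dots, x_n]$, which guarantees $\ord_{\mf/(x)}(\bar f) = d$; (P2) each $(x, f_i)\subseteq R$ is prime, so $\bar f_i$ is a prime of the UFD $\bar R := R/xR$; (P3) the primes $(x, f_i)$ and $(x, f_j)$ are distinct for $i \ne j$, so the $\bar f_i$ are pairwise non-associate in $\bar R$; and (P4) in case (B), $\bar u \in \bar R^\times$ is not an $s$th power. Under (P1)--(P3), $\bar f = \bar u \prod \bar f_i^{e_i}$ is the UFD factorization of $\bar f$, with the same exponents as $f$; case (A) then immediately yields that $\bar f$ is not an $s$th power, and in case (B) the same conclusion is supplied by (P4).

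To produce such an $x$, I invoke \Cref{lemma:flenner-Bertini} with $A = R$ and $\Jf = \mf$. Choose a generating set $z_1, \dots, z_n$ of $\mf$ so that $\ini(z_1)$ is transverse to the finitely many linear factors of $\ini(f)$; by part (i) of the lemma and further genericity on the coefficients introduced in the Flenner construction, (P1) is enforced. Condition (P2) is the conclusion of Flenner's local Bertini theorem for irreducibility \cite[Korollar 3.5]{flenner_bertini_1977} applied to each $(n-1)$-dimensional domain $R/(f_i)$; here the assumption $n \geq 3$ is essential, as it gives the dimension needed for Bertini-type irreducibility to go through. Condition (P3) translates to requiring $x$ to lie outside the finitely many codimension-$2$ minimal primes of the ideals $(f_i, f_j)$ for $i \ne j$; these primes lie in $D(\mf) = \Spec R \setminus \{\mf\}$ and can be added to the set $Q$ in condition (ii). Since $k$ is infinite, all of these open conditions can be satisfied simultaneously.

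The main obstacle is verifying (P4) in the case where $\char k$ divides $s$. When $\char k$ is coprime to $s$, Hensel's lemma identifies $(R^\times)^s$ with the preimage of $k^{\times s} \subseteq k^\times$, and this obstruction propagates automatically to $\bar R$ from the hypothesis $u \notin (R^\times)^s$. When $\char k$ divides $s$, writing $u = 1 + w$ with $w \in \mf$ reduces (P4) to showing that $w \notin \{v^s : v \in \mf\} + xR$ for a sufficiently generic $x$; this in turn reduces to a transversality condition on $\ini(x)$ vis-\`a-vis the leading term of $w$ in the $\mf$-adic filtration, which is compatible with the previous open conditions and can be imposed by further constraints on $\ini(x)$ in the Flenner construction. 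This Frobenius-flavored genericity is the technical heart of the proof, and is what forces us to work simultaneously with the initial form of $x$ and with primes to avoid, rather than simply invoking a black-box Bertini statement.
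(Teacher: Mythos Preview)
For case (A), your approach is essentially the paper's. Two small differences: first, the paper applies \emph{reducedness} Bertini (this is what Flenner's Korollar 3.5 actually gives) to the single reduced quotient $R/gR$ with $g := f_1\cdots f_t$, concluding that $\bar g$ is squarefree in $\bar R$; this already yields $\bar f = \bar u \prod_{i,j}\rho_{ij}^{e_i}$ with the $\rho_{ij}$ pairwise distinct primes, so your stronger condition (P2) that each $\bar f_i$ remain irreducible is unnecessary (and your citation of Korollar 3.5 for irreducibility is off). Second, the paper enforces the order condition by taking $\Jf = (z_1, z_2^{d+1},\dots, z_n^{d+1})$, which forces $x \equiv z_1\cdot(\text{unit}) \bmod \mf^{d+1}$, rather than relying on genericity of $\ini(x)$ with $\Jf = \mf$; either route works.

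The paper does not treat your case (B) at all: it simply asserts that ``the multiplicities $e_i$ are not all divisible by $s$,'' which is correct only when every unit of $R$ is an $s$th power. That holds in the sole application (\Cref{lemma:dim-n-alg-closed}, where $k = \bar k$ and $\gcd(s,p)=1$), but not in the generality of the lemma as stated; you have located a genuine gap between statement and proof.

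Your own handling of case (B) when $p \mid s$, however, is not a proof. The claimed reduction to ``a transversality condition on $\ini(x)$'' is wrong: the initial form of $x$ does not control whether $\bar u$ is an $s$th power. Take $s = p$, $n \geq 3$, and $w = x_1 x_2^{p-1}$, so $u = 1 + w$ is not a $p$th power. With $x = x_1 - x_2$ one finds $\bar w = x_2^p \in \bar R^p$, whereas with $x = x_1 - x_2 + x_3^2$ (same initial form) one finds $\bar w = x_2^p - x_3^2 x_2^{p-1} \notin \bar R^p$. Worse, if $\ini_\mf(w)$ is already a $p$th power in $k[x_1,\dots,x_n]$ (e.g.\ $w = x_1^p + x_1 x_2^p$), no leading-term condition on $x$ can detect the obstruction at all. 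For the purposes of the paper it is cleanest to add the hypothesis $R^\times = (R^\times)^s$ (equivalently, $k = \bar k$ and $\gcd(s,\char k)=1$) to the lemma and drop case (B) entirely.
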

\begin{proof}
Recall that a regular local ring is a unique factorization domain (UFD) \cite{auslander_UFD_1959}. Factor $f$ as $f = \pi_1^{e_1}\dots \pi_l^{e_l}$, where $\pi_1,\dots, \pi_l$ are distinct irreducible factors of $f$. By assumption that $f$ does not have an $s$th root in $R$, the multiplicities $e_i$ are not all divisible by $s$. Set $g = \pi_1\dots \pi_l$. Write $f = f_d + f_{>d}$, where $f_d$ is a homogeneous polynomial of degree $d$ and $f_{>d}\in \mf^{d+1}$. Let $Q\subseteq \Spec R$ denote the set of primes $\qf$ such that either $\qf$ is minimal over $(f_d, g)$ or such that $\codim(\qf) < n$ and the image of $\qf$ in $R/gR$ is a minimal element of $\Sing(R/gR)$. As $\dim R = n\geq 3$ and each of the minimal primes $\qf$ over $(f_d, g)$ has $\codim(\qf)\leq 2$, we conclude that $\mf\notin Q$.

By assumption that $k$ is infinite\footnote{It suffices to have $\#\P^{n-1}(k) > d$.}, there exists a homogeneous linear form in $R$ which is not a factor of $f_d$. Choose homogeneous coordinates $z_1,\dots, z_n$ for $R$ such that $z_1\nmid f_d$. Let $\Jf= (z_1, z_2^{d+1},\dots, z_n^{d+1})$. As $\sqrt{\Jf} = \mf$, we have that $Q\subseteq D(\Jf)$. Apply \Cref{lemma:flenner-Bertini} to $R, \Jf$ and $Q$ to produce $x$ in $\Jf$ such that $x\equiv z_1\mod (z_2^{d+1},\dots,z_n^{d+1}) + \mf\Jf, x\notin \bigcup_{\qf\in Q}\qf,
$ and $x\notin \pf^{(2)}$ for all $\pf \neq \mf$. Then $x\in \mf\setminus \mf^2$ by construction.

We study the image $\bar{f}$ of $f$ in $R/xR$, which we show has $\ord_{\mf/(x)}(\bar{f}) = d$. To see this, it suffices to show that $f\notin xR + \mf^{d+1}$. As $x\equiv z_1\mod (z_2^{d+1},\dots, z_n^{d+1}) + z_1\mf$, there exists $y$ in $\mf$ such that $x \equiv z_1 + yz_1\mod \mf^{d+1}$. As $z_1, f_d$ are homogeneous polynomials and $z_1\nmid f_d$, we have $f_d\notin z_1R + \mf^{d+1} = xR + \mf^{d+1}$, so $f\equiv f_d \not\equiv 0\mod xR+ \mf^{d+1}$ and thus $\ord_{\mf/(x)}(f) = d$.

Since $R/fR$ is reduced, the argument of \cite[Korollar 3.5]{flenner_bertini_1977} allows us to conclude that the ring $R/(f, x)R$ is reduced. In particular, the image of $f$ is squarefree in $R/xR$, so we may factor the image of each $\pi_i$ in $R/xR$ as $\bar{\pi_i} = \rho_{i1}\dots \rho_{is_i}$ where the $\rho_{ij}$ are irreducible and pairwise distinct. It follows that we may factor $\bar{f}$ as 
\(
\prod_{i=1}^l\prod_{j=1}^{s_i} \rho_{ij}^{e_i}.
\)
By assumption, the multiplicities $e_i$ are not all divisible by $s$. As $x\in \mf\setminus \mf^2$, the ring $R/xR$ is regular, hence a UFD, so $\bar{f}$ does not have an $s$th root in $R/xR$.
\end{proof}

\begin{lemma}\label{lemma:dim-n-alg-closed}
    Let $k$ be an algebraically closed field of characteristic $p>0$. Let $n\geq 1$ and let $R = k\llbracket x_1,\dots, x_n\rrbracket$. Let $f$ be an element of $R$ with $\ord_\mf(f)=d$. Write $d = qs$, where $q = p^e$ and $s$ is coprime to $p$. If $\fpt(f) = \frac{1}{d}$, then there exists $g$ in $\mf^{[q]}$ such that $f = g^s$.
\end{lemma}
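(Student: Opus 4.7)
The plan is to first reduce the problem to showing that $f$ admits some $s$-th root in $R$, then to induct on $n$ with base case $n \leq 2$ handled by previous lemmas. For the reduction: if we can find any $g_0 \in R$ with $g_0^s = f$, then $\ord_\mf(g_0) = d/s = q$ and by \Cref{prop:properties-of-ft}(2) we have $\fpt(g_0) = s \cdot \fpt(f) = s/d = 1/q = 1/\ord_\mf(g_0)$. \Cref{lemma:degree-q-case} then forces $g_0 \in \mf^{[q]}$, so $g = g_0$ works. Hence it suffices to prove the existence of an $s$-th root in $R$.

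For the base case $n = 1$: here $R$ is a DVR with uniformizer $x_1$, so $f = u x_1^d$ for some $u \in R^\times$. Since $k$ is algebraically closed with $\gcd(s,p) = 1$, the polynomial $T^s - u(0)$ splits over $k$ and its derivative is nonzero at any root, so Hensel's lemma produces $v \in R^\times$ with $v^s = u$. Then $g_0 := x_1^q v$ satisfies $g_0^s = x_1^{qs} v^s = x_1^d u = f$. The base case $n = 2$ is exactly \Cref{lemma:dim-2-alg-closed}.

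For the inductive step $n \geq 3$: assume the result for dimension $n-1$, and suppose for contradiction that $f$ has no $s$-th root in $R$. Apply \Cref{lemma:reduced-bertini-flenner} (using that $k$ is algebraically closed, hence infinite) to produce $x \in \mf \setminus \mf^2$ such that $\bar f \in R/xR$ admits no $s$-th root and $\ord_{\mf/(x)}(\bar f) = d$. Since $x \in \mf \setminus \mf^2$, the quotient $R/xR$ is again a regular local ring of dimension $n-1$, and by the Cohen structure theorem $R/xR \cong k\llbracket y_1, \dots, y_{n-1}\rrbracket$. By \Cref{prop:properties-of-ft}(5), $\fpt(\bar f) = \ft^{\bar\mf}(\bar f) \leq \ft^{\mf}(f) = 1/d$, while by \Cref{prop:properties-of-ft}(3) we have $\fpt(\bar f) \geq 1/\ord_{\bar\mf}(\bar f) = 1/d$. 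Thus $\fpt(\bar f) = 1/d$, so by the inductive hypothesis $\bar f$ admits an $s$-th root in $R/xR$, contradicting the choice of $x$.

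The main obstacle has effectively already been handled in the preceding two lemmas: \Cref{lemma:dim-2-alg-closed} supplies the nontrivial two-dimensional analysis (via critical points and the Weierstrass preparation argument), and \Cref{lemma:reduced-bertini-flenner} supplies the Bertini-style hyperplane-cutting that preserves both the order of vanishing and the non-existence of $s$-th roots. The only subtlety is to check that $\fpt$ and $\ord_\mf$ both behave correctly under the reduction mod $x$, which is immediate from \Cref{prop:properties-of-ft}.
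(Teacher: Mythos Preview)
Your proof is correct and follows essentially the same approach as the paper's: induction on $n$ with base cases $n\leq 2$ supplied by \Cref{lemma:dim-2-alg-closed} and the DVR computation, the Bertini-type cut via \Cref{lemma:reduced-bertini-flenner} for the inductive step, and the reduction to \Cref{lemma:degree-q-case} once an $s$th root is found. The only cosmetic difference is that you place the reduction to ``$f$ has an $s$th root'' at the start and argue the inductive step by direct contradiction, whereas the paper phrases the inductive step contrapositively (no $s$th root $\Rightarrow \fpt(f)>1/d$) and postpones the reduction to the end; your $n=1$ argument via Hensel for an $s$th root is in fact slightly cleaner than the paper's appeal to a $d$th root of the unit.
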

\begin{proof}
    If $n = 1$, then $f = ux_1^d$ for some $u$ in $R^\times$. As $k = \bar{k}$, every unit in $R$ has a $d$th root in $R$, so we have $f = (u^{1/d}x_1)^d$ and $g = u^{q/d}x_1^q$. The $n=2$ case is \Cref{lemma:dim-2-alg-closed}; we now suppose the claim holds in dimension $n-1$ and let $n\geq 3$. Let $f$ be an element of $R$ with $\ord_\mf(f) = d$, and write $d = qs$ where $q = p^e$ and $\gcd(s,p)=1$. If $f$ does not have an $s$th root in $R$, then by \Cref{lemma:reduced-bertini-flenner} there exists $x$ in $\mf\setminus \mf^2$ such that, writing $\bar{f}$ for the image of $f$ in $R/xR$, the element $\bar{f}$ does not have an $s$th root in $R/xR$ and $\ord_{\mf/(x)}(\bar{f}) = d$. By the classification in dimension $n-1$, we have $\ft^{\mf/(x)}(\bar{f}) > \frac{1}{d}$. By \Cref{prop:properties-of-ft} (5), we have $\ft^{\mf}(f) \geq \ft^{\mf/(x)}(\bar{f}) > \frac{1}{d}$. 
    
    Suppose now that $\fpt(f) = \frac{1}{d}$. By the previous paragraph, $f$ admits an $s$th root $g$ in $R$. Writing $f = g^s$, by \Cref{prop:properties-of-ft} (2) and the fact that $\ord_{\mf}$ is a valuation, we have 
    \[\ft^{\mf}(g) = s\ft^{\mf}(f) = \frac{1}{q} = \frac{1}{\ord_{\mf}(g)}.\]
    By \Cref{lemma:degree-q-case}, we conclude that $g\in \mf^{[q]}$, proving the claim.
\end{proof}
\subsection{Step (3)} Over an arbitrary field $k$, an element $f$ in $k\llbracket x_1,\dots, x_n\rrbracket$ with $\ft^{\mf}(f) = \frac{1}{\ord_{\mf}(f)}$ may not have an $s$th root at all. 
\begin{example}\label{example:non-power}
    Let $R = k\llbracket x\rrbracket$. Let $q = p^e$ and $s > 0$ such that $\gcd(s, p) = 1$ and write $d = qs$. If $a\in k^\times$ such that $a$ does not have a $s$th root in $k$, then $\ft^{\mf}(ax^d) = \ft^{\mf}(x^d) = \frac{1}{d},$ but $ax^d$ does not have an $s$th root in $R$. 
\end{example}
The following lemma shows that \Cref{example:non-power} is as pathological as can possibly occur in a power series ring.
\begin{lemma}\label{lemma:unit-multiple-of-power}
    Let $k$ be a field, $R = k\llbracket x_1,\dots, x_n\rrbracket$, and set $S = R \widehat{\otimes}_k \bar{k}$. Let $f$ be an element of $R$ and $s$ in $\Z^+$ such that $\text{char}(k)$ does not divide $s$. If there exists $g$ in $S$ such that $g^s = f$, then there exists a unit $u$ in $k^\times$ and an element $h$ in $R$ such that $f = uh^s$.
\end{lemma}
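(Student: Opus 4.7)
I would assume $f \neq 0$ (else the statement is trivial) and proceed by Galois descent: find a finite Galois extension $L/k$ in $\bar{k}$ with $g \in L\llbracket x_1, \dots, x_n\rrbracket$, use Hilbert 90 to kill the cocycle measuring the failure of $g$ to descend to $R$, and then identify the resulting constant obstruction as an element of $k^\times$.

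\emph{Reducing to a finite Galois extension.} Decompose $g = g_m + g_{m+1} + \dots$ by total degree, with $m = \ord_\mf(g)$ and $g_m \neq 0$. Comparing homogeneous components of $g^s = f$ gives $g_m^s = f_{sm}$ and, for each $i \geq 1$,
\[
s\,g_m^{s-1}\,g_{m+i} = f_{sm+i} - P_i(g_m, \dots, g_{m+i-1}),
\]
where $P_i$ is a polynomial with integer coefficients. Since $s$ is invertible in $k$ and $g_m \neq 0$, this uniquely determines $g_{m+i} \in \bar{k}[x_1, \dots, x_n]$; as gcds of polynomials are preserved under field extension, the solution actually lies in $L_0[x_1, \dots, x_n]$, where $L_0 = k(\text{coefficients of } g_m)$ is a finite extension of $k$. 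Hence every coefficient of $g$ lies in $L_0$. Because $\gcd(s, \char k) = 1$, the polynomial $Y^s - f_{sm} \in R[Y]$ is separable over $R$, so $g_m$ is separable over $k(x_1, \dots, x_n)$; a Frobenius argument (any element of $\bar{k}(x)$ has some $p^r$-th power in $k^{\mathrm{sep}}(x)$) then forces the coefficients of $g_m$ to lie in $k^{\mathrm{sep}}$. Let $L$ be the Galois closure of $L_0$ in $k^{\mathrm{sep}}$.

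\emph{Cocycle, Hilbert 90, and identifying the unit.} Let $G = \mathrm{Gal}(L/k)$, acting on $L\llbracket x_1, \dots, x_n\rrbracket$ coefficient-wise with fixed subring $R$. For each $\sigma \in G$, applying $\sigma$ to $g^s = f$ yields $\sigma(g)^s = f$, so $\zeta_\sigma := \sigma(g)/g$ is an $s$-th root of unity in $\mathrm{Frac}(L\llbracket x\rrbracket)$. Since $L\llbracket x\rrbracket$ is normal with residue field $L$ of characteristic coprime to $s$, Hensel's lemma applied to $T^s - 1$ forces $\zeta_\sigma \in \mu_s(L) \subseteq L^\times$, so $\sigma \mapsto \zeta_\sigma$ is a 1-cocycle $G \to L^\times$. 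By Hilbert's Theorem 90 there exists $u \in L^\times$ with $\zeta_\sigma = \sigma(u)/u$ for all $\sigma$. Setting $h := g/u$, a direct check gives $\sigma(h) = h$, so $h \in R$. Then $f = g^s = u^s h^s$, and $u^s = f/h^s$ lies in $L \cap \mathrm{Frac}(R)$; since $R$ is a normal local ring with residue field $k$, $k$ is algebraically closed in $\mathrm{Frac}(R)$ (any algebraic $\alpha$ would lie in $R$, and then a minimal-polynomial argument modulo $\mf$ forces $\alpha \in k$), so $u^s \in k^\times$.

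\emph{Main obstacle.} The most delicate step is arranging the finite \emph{Galois} extension in Step 1—specifically, verifying that the coefficient field $L_0/k$ is separable. This is the only point where the hypothesis $\gcd(s, \char k) = 1$ is essential; without it, $g$ could have purely inseparable coefficients over $k$ and the Hilbert-90 descent framework would break down. Once $L$ is in hand, the remainder is a formal application of Galois descent.
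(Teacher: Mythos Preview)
Your proof is correct but takes a genuinely different route from the paper's.

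The paper gives an elementary, bare-hands argument: fix a local monomial order $\succ$ on $S$ that well-orders the monomials, write $g = \sum_i a_i x^{I_i}$ with $x^{I_1} \succ x^{I_2} \succ \cdots$, observe that $a_1^s \in k$ from $\ini_\succ(f) \in R$, set $u = a_1^{-s}$ and $h = a_1^{-1}g$ so that $h^s = u^{-1}f \in R$, and then prove by induction along $\succ$ that each coefficient $b_j$ of $h$ lies in $k$ by examining the coefficient of $x^{(s-1)I_1 + I_j}$ in $h^s$, which isolates $s b_j$ modulo terms already known to lie in $k$.

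Your approach via Galois descent is more structural: you locate a finite Galois extension $L/k$ containing all coefficients of $g$, note that $\sigma(g)/g \in \mu_s(L)$ defines a $1$-cocycle, trivialize it with Hilbert~90, and descend $g$ to $R$ up to a scalar in $L^\times$ whose $s$th power then lands in $k^\times$. This explains conceptually why the obstruction is a single element of $k^\times/(k^\times)^s$ (it is the Kummer class of the cocycle) and would adapt more readily to variants. The price is the separability step---showing $L_0/k$ is separable via $g_m \in k^{\mathrm{sep}}(x)\cap \bar k[x] = k^{\mathrm{sep}}[x]$---which, while correct, is the one place requiring real care and some field-theoretic input; the paper's induction avoids this entirely. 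In short: the paper's argument is shorter and self-contained, yours reveals the underlying descent mechanism. (Minor point: your $u^s$, not $u$, is the constant called $u$ in the lemma's statement.)
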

\begin{proof}
    Let $\Gamma$ denote the set of monomials in $S$. Let $\succ$ be a local monomial order on $S$ such that $(\Gamma, \succ)$ is order-isomorphic to $(\N, >)$; for example, we may take $\succ$ such that $x^I\succ x^J\iff x^I <_{\text{deglex}} x^J$. Write $g = \sum_{i=0}^N a_ix^{I_i}$ where the $a_i$ are nonzero and $I_1 < I_2 < \dots$. We allow $N$ to be a positive integer or infinity; the proof is identical in both cases. As $a_1^s(x_1^{I_1})^s = \ini_\succ(g^s) = \ini_\succ(f) \in R$, it follows that $a_1^s\in k$. Set $u = a_1^{-s}$ and $h = a_1^{-1}g$; we have $f = uh^s$ and $h^s\in R$. To prove the claim, we'll show that $h\in R$.

    For ease of notation, write $b_i = \frac{a_i}{a_1}, i\geq 2$ so that $h = x^{I_1} + b_2x^{I_2} + \dots$. Suppose $j\geq 2$ such that $b_2,\dots, b_{j-1}\in k$ and consider the coefficient $c_j$ of $x^{(s-1)I_1 + I_j}$ in $h^s$. A priori, we have
    \begin{equation}\label{eqn:coefficient}
    c_j =\sum_{\substack{\ell_1,\dots, \ell_s\\ I_{\ell_1}+\dots + I_{\ell_s} = (s-1)I_1 + I_j}} b_{\ell_1}\dots b_{\ell_s}.
    \end{equation}
    If $\ell_1\leq \dots \leq \ell_s\in \N$ such that $I_{\ell_1} + \dots + I_{\ell_s} = (s-1)I_1 + I_j$, then we have $(s-1)I_1 + I_{\ell_s}\leq (s-1)I_1 + I_j$, so $\ell_s \leq j$ with equality if and only if $\ell_1 = \dots = \ell_{s-1} = 1$. Consequently, we may refine \Cref{eqn:coefficient}: 
    \begin{equation}\label{eqn:coefficient-refined}
    c_j= sb_j+\sum_{\substack{\ell_1,\dots, \ell_s < j\\ I_{\ell_1}+\dots + I_{\ell_s} = (s-1)I_1 + I_j}} b_{\ell_1}\dots b_{\ell_s}.
    \end{equation}
    As $h^s\in R$, the coefficient $c_j$ is in $k$. By assumption that $b_1,\dots, b_{j-1}\in k$, the second term of \Cref{eqn:coefficient-refined} is an element of $k$, so $sb_j\in k$. As $s$ is a unit in $k$, we conclude that $b_j\in k$.
\end{proof}
\begin{lemma}\label{lemma:dim-n-arbitrary-field}
    Let $k$ be a field of characteristic $p > 0, R = k\llbracket x_1,\dots, x_n\rrbracket,$ and $\mf = (x_1,\dots, x_n)$. Let $f$ be an element of $R$ such that $\ft^{\mf}(f) = \frac{1}{d}$. If $q$ is the largest power of $p$ dividing $d$ and $d = qs$, then there exists a unit $u$ in $R^\times$ and an element $g$ in $ \mf^{[q]}$ such that $f = ug^s$.
\end{lemma}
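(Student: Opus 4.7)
The plan is to reduce to the algebraically closed case by a faithfully flat base change, and then descend the resulting $s$-th root to $R$ (up to a unit scalar) via \Cref{lemma:unit-multiple-of-power}. Concretely, let $S = R\,\widehat{\otimes}_k\,\bar{k} \cong \bar{k}\llbracket x_1,\dots,x_n\rrbracket$. The map $R\hookrightarrow S$ is a flat local map of regular local rings, so \Cref{lemma:flat-ft} gives $\ft^{\mf S}(fS) = \ft^{\mf}(f) = \tfrac{1}{d}$ and $\ord_{\mf S}(fS) = \ord_{\mf}(f) = d$. Hence $fS$ satisfies the hypotheses of the algebraically closed case.

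Applying \Cref{lemma:dim-n-alg-closed} to $fS$ in $S$, there exists $g' \in (\mf S)^{[q]}$ with $(g')^s = f$ in $S$; in particular, $f$ is a genuine $s$-th power in $S$. Since $s$ is coprime to $p = \char k$ by construction, \Cref{lemma:unit-multiple-of-power} now applies and produces a unit $u \in k^\times \subseteq R^\times$ together with an element $h \in R$ such that $f = u h^s$.

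It remains to check that $h$ lies in $\mf^{[q]}$. Comparing orders of vanishing, $d = \ord_{\mf}(f) = \ord_{\mf}(u h^s) = s\cdot\ord_{\mf}(h)$, so $\ord_{\mf}(h) = q$. Because $u$ is a unit, $f^t \in \mf^{[p^e]}$ if and only if $h^{st} \in \mf^{[p^e]}$, so $\ft^{\mf}(f) = \ft^{\mf}(h^s)$. Combining with \Cref{prop:properties-of-ft}~(2) yields
\[
\ft^{\mf}(h) \;=\; s\,\ft^{\mf}(f) \;=\; \frac{s}{d} \;=\; \frac{1}{q} \;=\; \frac{1}{\ord_{\mf}(h)}.
\]
\Cref{lemma:degree-q-case} then forces $h \in \mf^{[q]}$; setting $g = h$ gives the required decomposition $f = u g^s$.

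The substantive step is the descent from $S$ back to $R$: once the algebraically closed case provides an $s$-th root in $S$, the real work is done by \Cref{lemma:unit-multiple-of-power}, which is exactly engineered to convert an $s$-th root over $\bar{k}$ into a unit-multiple $s$-th power expression over $k$ when $\gcd(s,p) = 1$. Everything else is bookkeeping with the flat base change and the degree-$q$ case already handled in \Cref{lemma:degree-q-case}.
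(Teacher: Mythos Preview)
Your proof is correct and follows essentially the same approach as the paper: base change to $S = R\,\widehat{\otimes}_k\,\bar{k}$, apply the algebraically closed case (\Cref{lemma:dim-n-alg-closed}), descend via \Cref{lemma:unit-multiple-of-power}, and finish with \Cref{lemma:degree-q-case}. In fact your write-up is slightly more detailed than the paper's, which invokes \Cref{lemma:degree-q-case} for $h$ without spelling out the computation $\ft^{\mf}(h) = 1/q = 1/\ord_{\mf}(h)$; note that both your argument and the paper's implicitly use $\ord_{\mf}(f) = d$, which is not stated in the lemma but is clear from the surrounding context.
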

\begin{proof}
    Let $S = R\widehat{\otimes}_k \bar{k}$ and consider the map $R\to S$. We have that $R\to S$ is faithfully flat, $\nf = \mf S$ is the maximal ideal of $S$, and $S = \bar{k}\llbracket x_1,\dots, x_n\rrbracket$. By \Cref{lemma:flat-ft}, we have $\ft^{\mf}(f) = \ft^{\nf}(f)$ and $\ord_{\mf}(f) = \ord_{\nf}(f)$. As $\ft^{\nf}(f) = \frac{1}{d} = \frac{1}{\ord_{\nf}(f)}$, by \Cref{lemma:dim-n-alg-closed} there exists $g$ in $\mf^{[q]}$ such that $f = g^s$. By \Cref{lemma:unit-multiple-of-power}, there exist $h$ in $R, u$ in $ R^\times$ such that $uh^s = f$, and $h\in \mf^{[q]}$ by \Cref{lemma:degree-q-case}. 
\end{proof}
\subsection{Step (4)} 
\begin{lemma}\label{lemma:arb-ideal-arb-field}
    Let $k$ be a field of characteristic $p > 0, R = k\llbracket x_1,\dots, x_n\rrbracket,$ and $\mf = (x_1,\dots, x_n)$. Let $\af\subseteq R$ such that $\ft^{\mf}(\af) = \frac{1}{d}$. If $q$ is the largest power of $p$ dividing $d$ and $d = qs$, then there exists $g$ in $\mf^{[q]}$ such that $g^sR = \af$.
\end{lemma}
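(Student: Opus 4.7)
My plan is to reduce the lemma to the principal-ideal case (\Cref{lemma:dim-n-arbitrary-field}) by first proving $\af$ must itself be principal, in two stages: a faithfully flat descent to the algebraically closed residue-field case, followed by an induction on $n = \dim R$.

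\textbf{Descent to $k = \bar k$.} Let $S = R \widehat{\otimes}_k \bar k$ with maximal ideal $\nf = \mf S$. By \Cref{lemma:flat-ft}, $\ord_\nf(\af S) = d$ and $\ft^\nf(\af S) = 1/d$. Granting the lemma over $\bar k$ gives $\af S = (G^s)$ for some $G \in \nf^{[q]}$, so $\dim_{\bar k}(\af S/\nf \cdot \af S) = 1$. The base-change isomorphism $(\af/\mf\af)\otimes_k \bar k \cong \af S/\nf\cdot \af S$ then forces $\dim_k(\af/\mf\af) = 1$; by Nakayama, $\af = (f)$. Since $\ft^\mf(f) = 1/d = 1/\ord_\mf(f)$, \Cref{lemma:dim-n-arbitrary-field} yields $f = uh^s$ with $u \in R^\times$, $h \in \mf^{[q]}$, so $\af = (h^s)$.

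\textbf{Algebraically closed case; induction on $n$.} The case $n = 1$ is trivial. For $n \geq 2$, pick $f_0 \in \af$ with $\ord_\mf(f_0) = d$. By \Cref{lemma:dim-n-arbitrary-field} plus Hensel's lemma on $T^s - u_0$ (a root modulo $\mf$ exists since $k = \bar k$, and the derivative $sT^{s-1}$ is a unit since $\gcd(s,p) = 1$), write $f_0 = g_0^s$ with $g_0 \in \mf^{[q]}$. Supposing for contradiction $f \in \af \setminus (f_0)$, the subideal $(f_0, f) \subseteq \af$ retains $\ord_\mf = d$ and $\ft^\mf = 1/d$ by \Cref{prop:properties-of-ft} (1) and (3), so I may replace $\af$ by $(f_0, f)$ and assume $\af$ is $2$-generated and non-principal. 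For $n \geq 3$, apply \Cref{lemma:flenner-Bertini} with $Q \supseteq \Ass(R/\af)$ and $\Jf$ chosen so that condition (b) below holds, producing $x \in \mf \setminus \mf^2$ such that (a) $x$ is a non-zero-divisor on $R/\af$---hence $\af \cap xR = x\af$ and the induced map $\af/\mf\af \to \bar\af/\bar\mf\bar\af$ is an isomorphism, making $\bar\af$ non-principal in $\bar R = R/(x)$---and (b) $\ord_{\mf/(x)}(\bar\af) = d$. The inductive hypothesis applied to $\bar\af$ in dimension $n-1$ forces $\ft^{\mf/(x)}(\bar\af) > 1/d$, contradicting \Cref{prop:properties-of-ft} (5), which gives $1/d = \ft^\mf(\af) \geq \ft^{\mf/(x)}(\bar\af)$.

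\textbf{Base case $n = 2$ (main obstacle).} Here the Flenner reduction degenerates: modding out by a transverse parameter lands in a DVR, where every ideal is trivially principal. I instead adapt the argument of \Cref{lemma:dim-2-alg-closed}: after Weierstrass preparation, $f_0 = u_0 Q_0^s$ with $Q_0 \in k\llbracket x\rrbracket[y]$ monic of degree $q$; pass to the finite flat extension $U = T\llbracket y\rrbracket$ over which $Q_0 = \prod(y - \alpha_i)^{m_i}$ factors. The ideal $\af U \supseteq (f_0)U$ contains $f$, and if $f \notin (f_0)U$ the Weierstrass form of $f$ must involve a root or multiplicity incompatible with divisibility by $f_0$. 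Applying the truncation of \cite[Proposition 5.1]{sato_accumulation_2023} to reduce $\af U$ to a polynomial-ideal setting, and then the critical-point framework (\Cref{cor:critical-point-computes-ft}) with $\bfr = \mf U = (y, t^\ell)$, the extra root forces two distinct critical points below $\tfrac{(1 + \ell/D)\tb}{\norm{\tb}}$, contradicting \Cref{lemma:unique-critical-point}; equivalently, $\ft^{\mf U}(\af U) > 1/d$, contradicting \Cref{lemma:flat-ft}.
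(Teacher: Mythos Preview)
Your approach is genuinely different from the paper's, but the base case $n=2$ is not a proof and I do not see how to complete it along the lines you suggest. The entire critical-point apparatus of \S 2.2---the regions $\mathscr U,\mathscr L$ of \Cref{defn:regions}, \Cref{lemma:unique-critical-point}, and \Cref{cor:critical-point-computes-ft}---is built for a \emph{single} element $f = \hb^{\tb}$ with a fixed exponent vector $\tb$; the statement ``the extra root forces two distinct critical points below $\tfrac{(1+\ell/D)\tb}{\norm{\tb}}$'' has no meaning for an ideal $\af U$ with two generators, since there is no $\tb$. Likewise, Sato's truncation result concerns test ideals of products $h_1^{t_1}\cdots h_r^{t_r}$, not $F$-thresholds of non-principal ideals. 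A second, smaller gap: in the inductive step you need $\mf\notin\Ass(R/\af)$ to place $\Ass(R/\af)$ inside $D(\Jf)$ for \Cref{lemma:flenner-Bertini}, and nothing in your hypotheses rules out an embedded $\mf$-primary component of the two-generated ideal $(f_0,f)$.

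For comparison, the paper bypasses all of this with a short direct argument. After writing $f = ug^s$ with $g\in\mf^{[q]}$ via \Cref{lemma:dim-n-arbitrary-field}, one observes that $g$ can replace one of the $x_i^q$ in a generating set for $\mf^{[q]}$ (since $\ord_\mf g = q$ forces some coefficient to be a unit), so that $x_1,\dots,x_{i-1},g,x_{i+1},\dots,x_n$ is a system of parameters. For any $z\in\af$ and $r>0$, choosing $e$ with $s\mid p^e-1$ and using $\nu_\af^\mf(qp^e) < p^e/s$, one gets $(g^s)^{t_e-r}z^{r+1}\in\mf^{[qp^e]}$ with $t_e=(p^e-1)/s$; the colon $(\mf^{[qp^e]}:g^{p^e-1-sr})$ is computed explicitly as $(x_1^{qp^e},\dots,g^{1+sr},\dots,x_n^{qp^e})$, and letting $e\to\infty$ yields $z^{r+1}\in g^{sr}R$ for all $r$. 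Hence $z$ lies in the integral closure of $g^sR$, which equals $g^sR$ since $R$ is normal. No induction on $n$, no Bertini, no return to the two-variable analysis.
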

\begin{proof}
    Let $f$ be an element of $\af$ such that $\ord_{\mf}(f) = d$. By \Cref{prop:properties-of-ft} (1), (3) we have $\frac{1}{d}\leq \ft^{\mf}(f)\leq \ft^{\mf}(\af) = \frac{1}{d}$. By \Cref{lemma:dim-n-arbitrary-field}, there exists $u$ in $R^\times, g$ in $ \mf^{[q]}$ such that $f = ug^s$. We will show that $g^s$ generates $\af$. 

    Let $e > 0$ such that $s\mid (p^e - 1)$; write $t_e = \frac{p^e-1}{s}$. By \Cref{prop:properties-of-ft} (4), for all $e > 0$ we have $\nu_{\af}^{\mf}(qp^e) < qp^e\ft^{\mf}(\af) \leq \ceil{qp^e/qs} = t_e+1$, so $\af^{t_e+1}\subseteq \mf^{[p^e]}$ for all $e > 0$. Let $z$ be an arbitrary element of $\af$ and let $r$ be a positive integer. As $\af^{t_e+1}\subseteq \mf^{[qp^e]}$, we in particular have $(g^s)^{t_e-r}z^{r+1}\in \mf^{[qp^e]}$. Write $g = a_1x_1^q + \dots + a_nx_n^q$. As=  $\ord_{\mf}(g) = q$, there is some $1\leq i\leq n$ such that $a_i\in R^\times$. As $g\equiv a_ix_i^q\mod (x_1^q,\dots, x_{i-1}^q,x_{i+1}^q,\dots, x_n^q)$, it follows that $\mf^{[q]} = (x_1^q,\dots, x_{i-1}^q, g, x_{i+1}^q, \dots, x_n^q)$. In particular, $x_1,\dots, x_{i-1},g,x_{i+1},\dots, x_n$ is a system of parameters for $R$. By \cite[Lemma 3.5]{kadyrsizova_lower_2022}, we have 
    \begin{equation}\label{eqn:ord_g_z_r}
    \begin{aligned}
        z^{r+1}\in (\mf^{[qp^e]}:(g^s)^{t_e-r}) &= \left((x_1^{qp^e},\dots,g^{p^e},\dots, x_n^{qp^e}):g^{p^e-1-sr}\right) \\&=  (x_1^{qp^e},\dots, g^{1+sr},\dots, x_n^{qp^e}).
    \end{aligned}
    \end{equation}
    Applying \Cref{eqn:ord_g_z_r} and letting $e\to\infty$, we obtain $z^{r+1}\in g^{1+sr}R\subseteq g^{sr}R$ for all $r > 0$. By \cite[Corollary 6.8.11]{huneke_integral}, $z$ is contained in the integral closure of the ideal $g^sR$, which by \cite[Proposition 1.5.2]{huneke_integral} is equal to $g^sR$ itself.
\end{proof}
\subsection{Step (5)}
\begin{lemma}\label{lemma:principal-descends}
    Let $(A, \mf)$ be a regular local ring and $I\subseteq A$ an ideal such that $I\widehat{A}$ is principal. Then $I$ is principal. 
\end{lemma}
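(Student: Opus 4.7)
The plan is to use Nakayama's lemma together with the faithful flatness of the completion $A \to \widehat{A}$ to show that $I$ and $I\widehat{A}$ have the same minimal number of generators. Note that regularity of $A$ plays no role here; the argument works for any Noetherian local ring.

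First I would recall that for any finitely generated module $M$ over a Noetherian local ring $(B, \nf)$, Nakayama's lemma identifies the minimal number of generators $\mu_B(M)$ with the residue-field dimension $\dim_{B/\nf}(M/\nf M)$. Applying this to $I$ over $A$, the task reduces to showing $\dim_{A/\mf}(I/\mf I) \leq 1$. Applying the same fact to $I\widehat{A}$ over $\widehat{A}$, which is local with maximal ideal $\mf\widehat{A}$ and residue field $\widehat{A}/\mf\widehat{A} = A/\mf$, the hypothesis that $I\widehat{A}$ is principal translates into the bound $\dim_{A/\mf}\bigl(I\widehat{A}/\mf\widehat{A}\cdot I\widehat{A}\bigr) \leq 1$.

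Next I would use faithful flatness of $A \to \widehat{A}$ to identify these two vector-space dimensions. Flatness gives $I\otimes_A \widehat{A} \cong I\widehat{A}$, and tensoring the right exact sequence $\mf\otimes_A I \to I \to I/\mf I \to 0$ with $\widehat{A}$ yields a natural isomorphism
\[
(I/\mf I)\otimes_A \widehat{A} \;\cong\; I\widehat{A}/\mf\widehat{A}\cdot I\widehat{A}.
\]
Since $I/\mf I$ is already an $A/\mf$-module and $(A/\mf)\otimes_A \widehat{A} = \widehat{A}/\mf\widehat{A} = A/\mf$, the left-hand side is just $I/\mf I$ itself as an $A/\mf$-vector space. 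Therefore the two dimensions coincide, the bound of $1$ transfers from $I\widehat{A}$ to $I$, and a single generator lifted via Nakayama shows that $I$ is principal. There is no real obstacle; the entire content of the lemma is this faithfully flat comparison of fibers.
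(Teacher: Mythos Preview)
Your proof is correct and takes a genuinely different route from the paper. The paper argues via flatness and Tor: since $\widehat{A}$ is regular (hence a domain), the principal ideal $I\widehat{A}$ is flat; faithful flatness of $A\to\widehat{A}$ then gives $\Tor_1^A(I,M)\otimes_A\widehat{A}=\Tor_1^{\widehat{A}}(I\widehat{A},M\otimes_A\widehat{A})=0$ for every $A$-module $M$, so $I$ is flat and therefore free of rank at most one. Your approach instead compares minimal numbers of generators directly via Nakayama, exploiting that $A$ and $\widehat{A}$ share a residue field. Your argument is more elementary (no Tor computation) and, as you observe, more general: it works for any Noetherian local ring, whereas the paper's step ``$I\widehat{A}$ principal $\Rightarrow$ $I\widehat{A}$ flat'' uses that $\widehat{A}$ is a domain. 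The paper's route, on the other hand, fits naturally into a homological framework and would generalize in a different direction (e.g., to bounding projective dimension rather than number of generators).
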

\begin{proof}
    Let $M$ be an $A$-module. Since $I\widehat{A}$ is principal, $I\widehat{A}$ is flat, so $\Tor^{\widehat{A}}_1(I\widehat{A}, M\otimes_A \widehat{A}) = 0$. Consequently, by \cite[\href{https://stacks.math.columbia.edu/tag/00M8}{Tag 00M8}]{stacks-project} we have
    \[
    0 = \Tor^{\widehat{A}}_1(I\widehat{A}, M\otimes_A \widehat{A}) = \Tor_1^{\widehat{A}}(I\otimes_A \widehat{A}, M\otimes_A \widehat{A}) = \Tor_1^A(I, M)\otimes_A \widehat{A},
    \]
    so by faithful flatness of $A\to \widehat{A}$ we conclude $\Tor^A_1(I, M) = 0$. As $M$ was arbitrary, we deduce that $I$ is a flat $A$-module, hence $I$ is a principal ideal.
\end{proof}
\begin{lemma}\label{lemma:good-formal-fibers}
Let $(A, \mf)$ be a regular local ring and $I\subseteq A$ an ideal such that $I\widehat{A}$ is principal and generated by an element $g^s$ for some $s$ in $\Z^+$, $g$ in $\widehat{A}$. Suppose that for all prime elements $\pi\in A$, the formal fiber $\widehat{A}\otimes_A A_{(\pi)}/\pi A_{(\pi)}$ is reduced. Then there exists $h$ in $A$ such that $I = h^sA$.
\end{lemma}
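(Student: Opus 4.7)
The plan is to exploit the UFD structure of both $A$ and $\widehat{A}$ together with the formal fiber hypothesis, in order to control the prime factorization of a generator of $I$ and show its exponents are divisible by $s$.

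First, I would apply \Cref{lemma:principal-descends} to write $I = fA$ for some $f \in A$ (if $I = 0$, take $h = 0$). The equality $fA\widehat{A} = g^s\widehat{A}$ then gives $f = ug^s$ in $\widehat{A}$ for some unit $u \in \widehat{A}^\times$. Since $A$ is a regular local ring it is a UFD \cite{auslander_UFD_1959}, so I can factor $f = v\pi_1^{e_1}\cdots \pi_r^{e_r}$ with $\pi_i \in A$ distinct prime elements and $v \in A^\times$. The goal is then to show $s \mid e_i$ for each $i$; the natural candidate for $h$ will be $\pi_1^{e_1/s}\cdots \pi_r^{e_r/s}$.

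To show $s \mid e_i$, I would factor each $\pi_i$ in the UFD $\widehat{A}$ as $\pi_i = w_i\prod_{j=1}^{k_i} q_{ij}^{a_{ij}}$ with $q_{ij}$ distinct prime elements of $\widehat{A}$. Since $\widehat{A}$ is faithfully flat over $A$, going-down implies that the height-$1$ prime $(q_{ij})$ contracts to a height-$1$ prime of $A$ containing $\pi_i$, hence to $(\pi_i)$. In particular each $q_{ij}$ survives in the localization $\widehat{A}_{(\pi_i)} = \widehat{A}\otimes_A A_{(\pi_i)}$, and prime factors arising from distinct $\pi_i$ and $\pi_{i'}$ must be distinct (otherwise their common contraction would contain both $\pi_i$ and $\pi_{i'}$). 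The hypothesis that $\widehat{A}_{(\pi_i)}/\pi_i \widehat{A}_{(\pi_i)}$ is reduced then forces $\pi_i \widehat{A}_{(\pi_i)}$ to be a radical ideal, which combined with the factorization of $\pi_i$ into primes gives $a_{ij} = 1$ for all $j$.

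Putting these steps together, in $\widehat{A}$ we have $f = (\text{unit})\cdot \prod_{i,j} q_{ij}^{e_i}$ with the $q_{ij}$ pairwise distinct primes. Factoring $g$ in the UFD $\widehat{A}$ and comparing prime factorizations on both sides of $f = ug^s$, we see that every $q_{ij}$ must appear in $g$ with multiplicity $c_{ij}$ satisfying $e_i = s c_{ij}$, and that no other primes can appear in $g$. This yields $s \mid e_i$ for each $i$, and setting $h := \pi_1^{e_1/s}\cdots \pi_r^{e_r/s} \in A$ gives $h^s A = fA = I$. The main technical obstacle is the squarefreeness conclusion in the third paragraph; this is the sole place where the formal fiber hypothesis enters, and it is essential, since without it $\pi_i$ could factor in $\widehat{A}$ as a proper power and divisibility of $e_i$ by $s$ would fail.
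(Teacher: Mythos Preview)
Your proposal is correct and follows essentially the same route as the paper's proof: descend to a principal generator $f\in A$ via \Cref{lemma:principal-descends}, factor $f$ in the UFD $A$, factor each $\pi_i$ in the UFD $\widehat{A}$, use the reduced formal fiber hypothesis to force the $\widehat{A}$-factorization of each $\pi_i$ to be squarefree, check that prime factors coming from distinct $\pi_i$ remain distinct in $\widehat{A}$ (you argue via going-down, the paper via $\rho_{ij}\widehat{A}\cap A=\pi_iA$; these are the same observation), and then compare with $g^s$ to deduce $s\mid e_i$. The only cosmetic difference is that you justify the distinctness of the $q_{ij}$ across different $i$ a bit more explicitly than the paper does.
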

\begin{proof}
    By \Cref{lemma:principal-descends}, choose $f$ in $A$ such that $I = fA$. As $I\widehat{A} = f\widehat{A} = g^s\widehat{A}$, it follows from \Cref{lemma:unit-multiple-of-power} that $f = ug^s$ for some $u$ in $\widehat{A}^\times$.

    Recall again that $A, \widehat{A}$ are UFDs \cite{auslander_UFD_1959}. Factor $f$ in $A$ as $f = v\pi_1^{d_1}\dots \pi_l^{d_l}$, with $v$ in $A^\times$ and $\pi_1,\dots, \pi_l$ pairwise coprime and irreducible. In $\widehat{A},$ factor each $\pi_i$ as $\rho_{i1}^{e_{i1}}\dots \rho_{is_i}^{e_{is_i}}$ where each $\rho_{ij}$ is irreducible and $\rho_{ij},\rho_{ij'}$ are coprime for $j\neq j'$. By assumption that the formal fiber $\widehat{A}\otimes_A A_{(\pi_i)}/\pi_iA_{(\pi_i)}$ is reduced for all $1\leq i\leq l$, it follows that $e_{ij} = 1$ for all $1\leq i\leq l, 1\leq j\leq s_i$. Moreover, as $\rho_{ij}\widehat{A}\cap A =\pi_i A$, it follows that the $\rho_{ij}, \rho_{i'j'}$ are coprime for all $(i,j)\neq (i', j')$. Writing 
    \[
    ug^s = f = v\pi_1^{d_1}\dots \pi_l^{d_l} = \prod_{i=1}^l
\prod_{j=1}^{s_i}\rho_{ij}^{d_i},\]
we conclude that $s\mid d_i$ for all $1\leq i\leq l$. Setting $h = \pi_1^{d_1/s}\dots \pi_l^{d_l/s}$, we have $I = h^sA$.
\end{proof}
\begin{proof}[Proof of \Cref{thm:E1-char-p}]
    Let $(A, \mf)$ be a regular local ring of characteristic $p > 0$ and $\af\subseteq A$ an ideal with $\ord_{\mf}(\af) = d>0$. Factor $d$ as $qs$, where $q = p^e$ and $\gcd(p,s)=1$. Suppose $\ft^{\mf}(\af) = \frac{1}{d}$. By \Cref{lemma:flat-ft} we have 
    \[ \ft^{\mf \widehat{A}}(\af \widehat{A})=\ft^{\mf}(\af) = \frac{1}{d} = \frac{1}{\ord_{\mf}(\af)} = \frac{1}{\ord_{\nf}(\af \widehat{A})}.\]
    By \Cref{lemma:arb-ideal-arb-field}, there exists $g$ in $ \mf^{[q]}\widehat{A}$ such that $g^s = \af \widehat{A}$. If we additionally assume that for all prime elements $\pi$ of $A$ that the formal fiber $\widehat{A}\otimes_A A_{(\pi)}/\pi A_{(\pi)}$ is reduced, then by \Cref{lemma:good-formal-fibers} there exists $h$ in $\mf^{[q]}$ such that $\af = h^sA$.

    Conversely, suppose that $\af \widehat{A} = g^s\widehat{A}$ for some $g$ in $\mf^{[q]}\widehat{A}$. By \Cref{lemma:flat-ft,lemma:degree-q-case} we have $\ft^{\mf}(\af) = \frac{1}{d}$. Similarly, if there exists $h$ in $\mf^{[q]}$ such that $\af = h^sA$, then $\ft^{\mf}(\af) = \frac{1}{d}$ by \Cref{lemma:degree-q-case}.
\end{proof}

Our hypothesis on the formal fibers is necessary by \Cref{example:formal-fiber-ctrexmpl}, which relies on the following lemma.
\begin{lemma}\label{lemma:bad-formal-fibers}
    Let $(T, \mf)$ be a complete local domain of dimension at least $2$ which satisfies Serre's condition $S_2$. Suppose $T$ has a coefficient field $k$. Let $x$ be a nonzero element of $\mf$. Then there exists a Noetherian local subring $(A, \mf\cap A)$ such that $x\in A, \widehat{A} = T,$ and $x$ is a prime element of $A$.
\end{lemma}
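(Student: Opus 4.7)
The plan is to realize $A$ as a Noetherian local subring of $T$ via a Heitmann--Loepp style construction. The key reduction is: it suffices to choose a prime $\pf \subseteq T$ containing $x$ and to build $A$ so that $\pf \cap A = xA$, for then $A/xA \hookrightarrow T/\pf$ embeds into a domain and is therefore itself a domain, making $xA$ a prime ideal of $A$.

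First, I would pick $\pf$ to be a minimal prime over $xT$. Since $T$ is a domain and $0 \neq x \in \mf$, such a prime exists and has height $1$ by Krull's Hauptidealsatz. The hypothesis $\dim T \geq 2$ ensures $\pf \subsetneq \mf$, and the $S_2$ condition on $T$ ensures that $xT$ has no embedded primes, so the height-one primes over $xT$ are precisely its associated primes. In particular $T/\pf$ is a complete local domain of positive Krull dimension, hence infinite.

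Next I would construct $A$ as a transfinite union of Noetherian local subrings of $T$, in the style of the constructions of Heitmann and Loepp. Start with $A_0 = k[x]_{(x)}$ embedded in $T$; here $\pf \cap A_0 = xA_0$ holds automatically. Enumerate elements $\{t_\alpha\}$ of $T$ whose images generate $T$ as a complete topological $k$-algebra, and define $A_\alpha$ inductively by adjoining $t_\alpha$ together with auxiliary elements chosen from $T \setminus \pf$, localizing at the contraction of $\mf$. The role of the auxiliary elements is to preserve the invariant $\pf \cap A_\alpha = xA_\alpha$ at every stage; the abundance of units available in $T \setminus \pf$ (since $T/\pf$ is a positive-dimensional domain) gives enough flexibility to do this. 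Let $A = \bigcup_\alpha A_\alpha$, a local subring of $T$ with $\pf \cap A = xA$ by construction.

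The main obstacle is proving that $A$ is Noetherian: a filtered union of Noetherian rings need not be Noetherian. The standard way to close this kind of construction is to verify that the inclusion $A \to \widehat{A} \cong T$ is faithfully flat, which combined with $T$ being Noetherian and $\mf \cap A$ being finitely generated (by any minimal generating set of $\mf$ lying in $A$) yields Noetherianness of $A$. Faithful flatness is itself extracted by checking inductively that every finitely generated ideal $I \subseteq A$ satisfies $IT \cap A = I$; this is the most delicate point, and the reason auxiliary elements must be chosen with care during the iterative construction. Once Noetherianness and $\widehat{A} = T$ are in hand, the primeness of $x$ in $A$ follows from the invariant $\pf \cap A = xA$ as noted in the opening reduction.
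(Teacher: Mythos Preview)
Your outline matches the paper's strategy: the paper applies a Heitmann--Loepp style result, namely \cite[Theorem~1.1]{chatlos_fibers_2012}, which manufactures exactly such a subring $A$, and then verifies its hypotheses. The $S_2$ assumption enters, as you anticipate, to guarantee that $\Ass(T/xT)$ consists only of the minimal primes over $xT$; the one remaining hypothesis is an elementary check that $\qf \cap \operatorname{Frac}(\Pi[x]) \subseteq xT$ for each such minimal prime $\qf$, where $\Pi$ is the prime subring. So you have correctly identified both the method and the role of each hypothesis.

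However, as written your proposal is an outline, not a proof. The two steps you yourself flag as delicate --- maintaining $\pf \cap A_\alpha = xA_\alpha$ while adjoining enough of $T$ to force $\widehat{A} = T$, and verifying $IT \cap A = I$ for finitely generated $I$ to extract Noetherianity --- constitute the entire substance of the cited theorem, and each requires real work: transcendence and cardinality bookkeeping to preserve the invariant under adjunction, and a closing-off procedure for faithful flatness. Phrases like ``auxiliary elements chosen from $T \setminus \pf$'' and ``enough flexibility'' do not yet amount to an argument. Unless you intend to reprove the result of \cite{chatlos_fibers_2012} from scratch, it is far more efficient to do what the paper does: cite it and check the hypotheses, which you essentially already know how to do.
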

\begin{proof}
    Let $C = \{\qf_1,\dots, \qf_r\}$ denote the set of minimal primes over $x$. Let $\Pi$ denote the prime subring of $T$ -- that is, $\Pi = \Q$ if $\char k = 0$ and $\Pi = \F_p$ if $\char k = p>0$. The claim follows once we check that this setup satisfies the hypotheses of \cite[Theorem 1.1]{chatlos_fibers_2012}.
    \begin{enumerate}
    \setcounter{enumi}{-1}
        \item The primes $\qf_1,\dots, \qf_r$ have height at most 1, and are thus nonmaximal. Moreover, we have $x\in \qf_1\cap \dots \cap \qf_r$.
        \item As $\Ass(T) = \{(0)\}$, we trivially have $\pf\cap \Pi[x] = (0)$ for all $\pf$ in $ \Ass(T)$.
        \item As $T$ is $S_2$ and $x$ is a regular element, it follows that $T/xT$ is $S_1$. Consequently, the associated primes of $T/xT$ are precisely the minimal primes of $T/xT$, so $\Ass(T/xT) = C$. 
        \item For all $1\leq i\leq r$, we claim that $\text{Frac}(\Pi[x])\cap \qf_i \subseteq xT$. To see this, suppose that there exist $d,e$ in $ \Z_{\geq 0}, a_1,\dots, a_d, b_1,\dots, b_e$ in $ \Pi, f$ in $\qf_i$ such that
        \begin{equation}\label{eqn:pi_x_intersect_qf_i}
        \frac{x^d + a_1x^{d-1}+\dots + a_d}{x^e+b_1x^{e-1}+\dots+b_e} = f. 
        \end{equation}
        Moreover, we suppose that the left-hand side of \Cref{eqn:pi_x_intersect_qf_i} is written in reduced form. Then we have
        \[
        a_d = x(x^{d-1} + a_1x^{d-2} +\dots+ a_{d-1}) -f(x^e+b_1x^{e-1}+\dots+b_e) \in \mf.
        \]
        As $a_d\in \Pi\cap \mf$, it follows that $a_d = 0$, so we have
        \[
        b_ef = x(x^{d-1} + a_1x^{d-2} +\dots+ a_{d-1} - f(x^{e-1}+b_1x^{e-2}+\dots+b_{e-1}))\in xT.
        \]
        As the left-hand side of \Cref{eqn:pi_x_intersect_qf_i} is written in reduced form and $x$ divides the numerator $x^d + \dots + xa_{d-1}$, it follows that $b_e\neq 0$, so $f\in xT$.
    \end{enumerate}
\end{proof}
As an application of the above lemma, we show that for all $d >0$, there exist many regular local rings $(A, \mf)$ and prime elements $f$ in $A$ such that $\ord_{\mf}(f) = d$, $\ft^{\mf}(f) = \frac{1}{d}$.
\begin{example}\label{example:formal-fiber-ctrexmpl}
    Let $k$ be a field of characteristic $p>0$. For any $n\geq 2$ let $T = k\llbracket x_1,\dots, x_n\rrbracket$ and $\mf = (x_1,\dots, x_n)$. Fix some $e\geq 0, q = p^e$, and $s>0$ such that $\gcd(s,p)=1$. Choose $g$ in $ \mf^{[q]}$ such that $\ord_{\mf}(g) = q$ and set $f = g^s$. By \Cref{lemma:degree-q-case}, we have $\ft^{\mf}(f) = \frac{1}{s}\ft^{\mf}(g) = \frac{1}{qs} = \frac{1}{\ord_{\mf}(f)}$.
    
    By \Cref{lemma:bad-formal-fibers}, there exists a Noetherian local subring $(A, \mf\cap A)\subseteq T$ such that $f\in A, \widehat{A} = T,$ and such that $f$ is a prime element of $A$. Let $\nf:= \mf\cap A$. As argued in \Cref{lemma:dim-n-arbitrary-field}, we have $\ft^{\nf}(f) = \ft^{\mf}(f)$ and $\ord_{\nf}(f) = \ord_{\mf}(f)$, so $\ft^{\nf}(f) = \frac{1}{\ord_{\nf}(f)}$.
\end{example}
To conclude this article, we consider the effect on the main theorem of adding an additional reducedness hypothesis. 
\begin{example}\label{example:E1-char-p-reduced}
    If $R$ is a regular local ring, $x,y$ part of a regular system of parameters for $R$, and $a,b$ coprime integers, then $\widehat{R}/(x^a - y^b)\widehat{R}$ is geometrically integral. To see this, writing $\widehat{R} = k\llbracket x, y, z_1,\dots, z_n\rrbracket$, we have $\widehat{R}/(x^a - y^b)\widehat{R}\cong k\llbracket t^a,t^b,z_1,\dots, z_n\rrbracket$.

    Let $(R, \mf)$ be a regular local ring of characteristic $p > 0$. Let $\af\subseteq R$ such that
    $\ord_{\mf}(\af) = d, \ft^{\mf}(\af) = \frac{1}{d},$ and $\widehat{R}/\af\widehat{R}$ is reduced. Factor $d$ as $d = qs$, where $q = p^e$ and $\gcd(s,p)=1$. By \Cref{thm:E1-char-p}, there exists $g$ in $\mf^{[q]}\widehat{R}$ such that $\af\widehat{R} = g^s\widehat{R}$, so reducedness of $\widehat{R}/g^s\widehat{R}$ forces $s = 1$. By \Cref{lemma:intersection-pure} we also have $\af\subseteq \mf^{[q]}$. For any $q = p^e$, if $x, y$ is part of a regular system of parameters for $R,$ then with $f =x^q - y^{q+1}$, we have $\ord_{\mf}(f) = q$ by construction, $\ft^{\mf}(f_q) = \frac{1}{q}$ by \Cref{lemma:degree-q-case}, and $\widehat{R}/f_q\widehat{R}$ is \textit{geometrically} reduced. 

    To show that $\frac{1}{d}$ is an optimal lower bound on $\ft^{\mf}(\af)$ when $\ord_{\mf}(\af) = d$ and $\widehat{R}/\af  \widehat{R}$ is reduced (or geometrically reduced), suppose $\dim R\geq 2$ and $x,y$ is part of a regular system of parameters for $R$. Setting $g_{d,t} = x^d - y^{td+1}$, we have $\ft^{\mf}(g_{d,t}) \leq \ft^{\mf}((x^d, y^{td+1})) \leq \frac{1}{d} + \frac{1}{td+1}\searrow \frac{1}{d}$, where the inequality is by \Cref{prop:properties-of-ft} (1) and the equality by \cite[Proposition 36]{hernandez_f-purity_2016}. 
\end{example}
\printbibliography

@article{sato_accumulation_2023,
title = {On accumulation points of F-pure thresholds on regular local rings},
journal = {Journal of Algebra},
volume = {622},
pages = {614-635},
year = {2023},
issn = {0021-8693},
doi = {https://doi.org/10.1016/j.jalgebra.2023.02.010},
url = {https://www.sciencedirect.com/science/article/pii/S0021869323000595},
author = {Kenta Sato},
keywords = {Accumulation points, f-pure thresholds, Strongly f-regular singularities, Non-standard extension}
}

@article{flenner_bertini_1977,
author = {Flenner, Hubert},
journal = {Mathematische Annalen},
keywords = {Bertini Theorem; Hypersurfaces; Ideals in Local Rings},
language = {ger},
pages = {97-112},
title = {Die Sätze von Bertini für lokale Ringe.},
url = {http://eudml.org/doc/163006},
volume = {229},
year = {1977},
}

@article {hernandez_syzygy_2017,
    AUTHOR = {Hern\'andez, Daniel J. and Teixeira, Pedro},
     TITLE = {{$F$}-threshold functions: syzygy gap fractals and the
              two-variable homogeneous case},
   JOURNAL = {J. Symbolic Comput.},
  FJOURNAL = {Journal of Symbolic Computation},
    VOLUME = {80},
      YEAR = {2017},
     PAGES = {451--483},
      ISSN = {0747-7171,1095-855X},
   MRCLASS = {13A35 (13D02)},
  MRNUMBER = {3574521},
MRREVIEWER = {Alberto\ F.\ Boix},
       DOI = {10.1016/j.jsc.2016.07.003},
       URL = {https://doi.org/10.1016/j.jsc.2016.07.003},
}

@misc{stacks-project,
  author       = {The {Stacks project authors}},
  title        = {The Stacks project},
  howpublished = {\url{https://stacks.math.columbia.edu}},
  year         = {2025},
}

@article{hernandez_f-purity_2012,
    AUTHOR = {Hern\'andez, Daniel J.},
     TITLE = {{$F$}-purity of hypersurfaces},
   JOURNAL = {Math. Res. Lett.},
  FJOURNAL = {Mathematical Research Letters},
    VOLUME = {19},
      YEAR = {2012},
    NUMBER = {2},
     PAGES = {389--401},
      ISSN = {1073-2780,1945-001X},
   MRCLASS = {13A35},
  MRNUMBER = {2955770},
MRREVIEWER = {Adela\ N.\ Vraciu},
       DOI = {10.4310/MRL.2012.v19.n2.a11},
       URL = {https://doi.org/10.4310/MRL.2012.v19.n2.a11},
}

@book {lang_algebra_2002,
    AUTHOR = {Lang, Serge},
     TITLE = {Algebra},
    SERIES = {Graduate Texts in Mathematics},
    VOLUME = {211},
   EDITION = {third},
 PUBLISHER = {Springer-Verlag, New York},
      YEAR = {2002},
     PAGES = {xvi+914},
      ISBN = {0-387-95385-X},
   MRCLASS = {00A05 (15-02)},
  MRNUMBER = {1878556},
       DOI = {10.1007/978-1-4613-0041-0},
       URL = {https://doi.org/10.1007/978-1-4613-0041-0},
}

@incollection{mustata_f-thresholds_2004,
    AUTHOR = {Musta\c t\v a, Mircea and Takagi, Shunsuke and Watanabe,
              Kei-ichi},
     TITLE = {F-thresholds and {B}ernstein-{S}ato polynomials},
 BOOKTITLE = {European {C}ongress of {M}athematics},
     PAGES = {341--364},
 PUBLISHER = {Eur. Math. Soc., Z\"urich},
      YEAR = {2005},
      ISBN = {3-03719-009-4},
   MRCLASS = {13A35 (14B05)},
  MRNUMBER = {2185754},
MRREVIEWER = {Karen\ E.\ Smith},
}

@article{hernandez_f-threshold_2017,
    AUTHOR = {Hern\'andez, Daniel J. and Teixeira, Pedro},
     TITLE = {{$F$}-threshold functions: syzygy gap fractals and the
              two-variable homogeneous case},
   JOURNAL = {J. Symbolic Comput.},
  FJOURNAL = {Journal of Symbolic Computation},
    VOLUME = {80},
      YEAR = {2017},
     PAGES = {451--483},
      ISSN = {0747-7171,1095-855X},
   MRCLASS = {13A35 (13D02)},
  MRNUMBER = {3574521},
MRREVIEWER = {Alberto\ F.\ Boix},
       DOI = {10.1016/j.jsc.2016.07.003},
       URL = {https://doi.org/10.1016/j.jsc.2016.07.003},
}

@article{schwede_centers_2010,
    AUTHOR = {Schwede, Karl},
     TITLE = {Centers of {$F$}-purity},
   JOURNAL = {Math. Z.},
  FJOURNAL = {Mathematische Zeitschrift},
    VOLUME = {265},
      YEAR = {2010},
    NUMBER = {3},
     PAGES = {687--714},
      ISSN = {0025-5874,1432-1823},
   MRCLASS = {13A35 (14B05 14F18)},
  MRNUMBER = {2644316},
MRREVIEWER = {Doru\ \c Stef\u anescu},
       DOI = {10.1007/s00209-009-0536-5},
       URL = {https://doi.org/10.1007/s00209-009-0536-5},
}

@article{demailly_sharp_2014,
    AUTHOR = {Demailly, Jean-Pierre and Pham, Hoang Hiep},
     TITLE = {A sharp lower bound for the log canonical threshold},
   JOURNAL = {Acta Math.},
  FJOURNAL = {Acta Mathematica},
    VOLUME = {212},
      YEAR = {2014},
    NUMBER = {1},
     PAGES = {1--9},
      ISSN = {0001-5962,1871-2509},
   MRCLASS = {32W20 (32C99 32U05 32U25)},
  MRNUMBER = {3179606},
MRREVIEWER = {Vincent\ Guedj},
       DOI = {10.1007/s11511-014-0107-4},
       URL = {https://doi.org/10.1007/s11511-014-0107-4},
}

@article{bivia-ausina_joint_2008,
	title = {Joint reductions of monomial ideals and multiplicity of complex analytic maps},
	volume = {15},
	issn = {10732780, 1945001X},
	url = {https://link.intlpress.com/JDetail/1806605181561237505},
	doi = {10.4310/MRL.2008.v15.n2.a15},
	language = {en},
	number = {2},
	urldate = {2025-10-11},
	journal = {Mathematical Research Letters},
	author = {Bivi\`{a}-Ausina, Carles},
	year = {2008},
	pages = {389--407},
}

@book{huneke_integral,
    AUTHOR = {Huneke, Craig and Swanson, Irena},
     TITLE = {Integral closure of ideals, rings, and modules},
    SERIES = {London Mathematical Society Lecture Note Series},
    VOLUME = {336},
 PUBLISHER = {Cambridge University Press, Cambridge},
      YEAR = {2006},
     PAGES = {xiv+431},
      ISBN = {978-0-521-68860-4},
   MRCLASS = {13B22 (13A18 13A30 13A35 13H15 14A05)},
  MRNUMBER = {2266432},
MRREVIEWER = {Liam\ O'Carroll},
}

@book {cox_usingAG_2005,
    AUTHOR = {Cox, David A. and Little, John and O'Shea, Donal},
     TITLE = {Using algebraic geometry},
    SERIES = {Graduate Texts in Mathematics},
    VOLUME = {185},
   EDITION = {Second},
 PUBLISHER = {Springer, New York},
      YEAR = {2005},
     PAGES = {xii+572},
      ISBN = {0-387-20706-6},
   MRCLASS = {13Pxx (13-01 14-01 14Qxx)},
  MRNUMBER = {2122859},
}

@misc{FrobeniusThresholdsSource,
  title = {{FrobeniusThresholds: F-thresholds. Version~2.1}},
  author = {Juliette Bruce and Daniel Hern{\'a}ndez and Karl Schwede and Dan Smolkin and Pedro Teixeira and Emily Witt},
  howpublished = {A \emph{Macaulay2} package available at
    \url{https://github.com/Macaulay2/M2/tree/stable/M2/Macaulay2/packages}}
}

@incollection {gabber_notes_2004,
    AUTHOR = {Gabber, Ofer},
     TITLE = {Notes on some {$t$}-structures},
 BOOKTITLE = {Geometric aspects of {D}work theory. {V}ol. {I}, {II}},
     PAGES = {711--734},
 PUBLISHER = {Walter de Gruyter, Berlin},
      YEAR = {2004},
      ISBN = {3-11-017478-2},
   MRCLASS = {14F05 (14F43)},
  MRNUMBER = {2099084},
MRREVIEWER = {Maurizio\ Cailotto},
}

@article {chatlos_fibers_2012,
    AUTHOR = {Chatlos, John and Simanek, Brian and Watson, Nathaniel G. and
              Wu, Sherry X.},
     TITLE = {Semilocal formal fibers of principal prime ideals},
   JOURNAL = {J. Commut. Algebra},
  FJOURNAL = {Journal of Commutative Algebra},
    VOLUME = {4},
      YEAR = {2012},
    NUMBER = {3},
     PAGES = {369--385},
      ISSN = {1939-0807,1939-2346},
   MRCLASS = {13B35 (18B30)},
  MRNUMBER = {3024261},
MRREVIEWER = {Irena\ Swanson},
       DOI = {10.1216/jca-2012-4-3-369},
       URL = {https://doi.org/10.1216/jca-2012-4-3-369},
}

@article{auslander_UFD_1959,
    AUTHOR = {Auslander, Maurice and Buchsbaum, D. A.},
     TITLE = {Unique factorization in regular local rings},
   JOURNAL = {Proc. Nat. Acad. Sci. U.S.A.},
  FJOURNAL = {Proceedings of the National Academy of Sciences of the United
              States of America},
    VOLUME = {45},
      YEAR = {1959},
     PAGES = {733--734},
      ISSN = {0027-8424},
   MRCLASS = {16.00},
  MRNUMBER = {103906},
MRREVIEWER = {M.\ Nagata},
       DOI = {10.1073/pnas.45.5.733},
       URL = {https://doi.org/10.1073/pnas.45.5.733},
}

@book{schwede_singularities_2024,
	title = {Singularities defined by the {Frobenius} map},
	language = {en},
	author = {Schwede, Karl and Smith, Karen E},
	year = {2024},
	file = {PDF:/Users/benbaily/Zotero/storage/3V3K4SXF/Schwede and Smith - Singularities defined by the Frobenius map.pdf:application/pdf},
}

@article{kadyrsizova_lower_2022,
    AUTHOR = {Kadyrsizova, Zhibek and Kenkel, Jennifer and Page, Janet and
              Singh, Jyoti and Smith, Karen E. and Vraciu, Adela and Witt,
              Emily E.},
     TITLE = {Lower bounds on the {$F$}-pure threshold and extremal
              singularities},
   JOURNAL = {Trans. Amer. Math. Soc. Ser. B},
  FJOURNAL = {Transactions of the American Mathematical Society. Series B},
    VOLUME = {9},
      YEAR = {2022},
     PAGES = {977--1005},
      ISSN = {2330-0000},
   MRCLASS = {13A35},
  MRNUMBER = {4498775},
MRREVIEWER = {Geoffrey\ D.\ Dietz},
       DOI = {10.1090/btran/106},
       URL = {https://doi.org/10.1090/btran/106},
}

@incollection {blickle_discreteness_2008,
    AUTHOR = {Blickle, Manuel and Musta\c t\v a, Mircea and Smith, Karen E.},
     TITLE = {Discreteness and rationality of {$F$}-thresholds},
      NOTE = {Special volume in honor of Melvin Hochster},
   JOURNAL = {Michigan Math. J.},
  FJOURNAL = {Michigan Mathematical Journal},
    VOLUME = {57},
      YEAR = {2008},
     PAGES = {43--61},
      ISSN = {0026-2285,1945-2365},
   MRCLASS = {13A35 (14F18)},
  MRNUMBER = {2492440},
MRREVIEWER = {Tomasz\ Szemberg},
       DOI = {10.1307/mmj/1220879396},
       URL = {https://doi.org/10.1307/mmj/1220879396},
}

@article {guan_lelong_2015,
    AUTHOR = {Guan, Qi'an and Zhou, Xiangyu},
     TITLE = {Characterization of multiplier ideal sheaves with weights of
              {L}elong number one},
   JOURNAL = {Adv. Math.},
  FJOURNAL = {Advances in Mathematics},
    VOLUME = {285},
      YEAR = {2015},
     PAGES = {1688--1705},
      ISSN = {0001-8708,1090-2082},
   MRCLASS = {32U25 (31C10 32S05 32U05)},
  MRNUMBER = {3406538},
MRREVIEWER = {Dan\ Coman},
       DOI = {10.1016/j.aim.2015.08.002},
       URL = {https://doi.org/10.1016/j.aim.2015.08.002},
}

@unpublished{baily_classification_2026,
  author = "Benjamin Baily",
  title  = "Homogeneous Ideals with Minimal Singularity Thresholds",
  year   = 2026,
note = "In preparation"
}

@Misc{M2,
          author = {Grayson, Daniel R. and Stillman, Michael E.},
          title = {Macaulay2, a software system for research in algebraic geometry},
          howpublished = {Available at \url{http://www2.macaulay2.com}}
        }

@article{takagi_f-singularities_2004,
    AUTHOR = {Takagi, Shunsuke},
     TITLE = {F-singularities of pairs and inversion of adjunction of
              arbitrary codimension},
   JOURNAL = {Invent. Math.},
  FJOURNAL = {Inventiones Mathematicae},
    VOLUME = {157},
      YEAR = {2004},
    NUMBER = {1},
     PAGES = {123--146},
      ISSN = {0020-9910,1432-1297},
   MRCLASS = {14E30 (13A35 14N30)},
  MRNUMBER = {2135186},
MRREVIEWER = {Karen\ E.\ Smith},
       DOI = {10.1007/s00222-003-0350-3},
       URL = {https://doi.org/10.1007/s00222-003-0350-3},
}

@article{hernandez_f-purity_2016,
    AUTHOR = {Hern\'andez, Daniel J.},
     TITLE = {{$F$}-purity versus log canonicity for polynomials},
   JOURNAL = {Nagoya Math. J.},
  FJOURNAL = {Nagoya Mathematical Journal},
    VOLUME = {224},
      YEAR = {2016},
    NUMBER = {1},
     PAGES = {10--36},
      ISSN = {0027-7630,2152-6842},
   MRCLASS = {13A35 (14B05)},
  MRNUMBER = {3572748},
MRREVIEWER = {Alberto\ F.\ Boix},
       DOI = {10.1017/nmj.2016.14},
       URL = {https://doi.org/10.1017/nmj.2016.14},
}

@article {hara_generalization_2003,
    AUTHOR = {Hara, Nobuo and Yoshida, Ken-Ichi},
     TITLE = {A generalization of tight closure and multiplier ideals},
   JOURNAL = {Trans. Amer. Math. Soc.},
  FJOURNAL = {Transactions of the American Mathematical Society},
    VOLUME = {355},
      YEAR = {2003},
    NUMBER = {8},
     PAGES = {3143--3174},
      ISSN = {0002-9947,1088-6850},
   MRCLASS = {13A35},
  MRNUMBER = {1974679},
MRREVIEWER = {Irena\ Swanson},
       DOI = {10.1090/S0002-9947-03-03285-9},
       URL = {https://doi.org/10.1090/S0002-9947-03-03285-9},
}

@article{takagi_f-pure_2004,
    AUTHOR = {Takagi, Shunsuke and Watanabe, Kei-ichi},
     TITLE = {On {F}-pure thresholds},
   JOURNAL = {J. Algebra},
  FJOURNAL = {Journal of Algebra},
    VOLUME = {282},
      YEAR = {2004},
    NUMBER = {1},
     PAGES = {278--297},
      ISSN = {0021-8693,1090-266X},
   MRCLASS = {13A35},
  MRNUMBER = {2097584},
MRREVIEWER = {Oleg\ N.\ Popov},
       DOI = {10.1016/j.jalgebra.2004.07.011},
       URL = {https://doi.org/10.1016/j.jalgebra.2004.07.011},
}

@incollection {huneke_closure_2008,
    AUTHOR = {Huneke, Craig and Musta\c{t}\u{a}, Mircea and Takagi, Shunsuke
              and Watanabe, Kei-ichi},
     TITLE = {F-thresholds, tight closure, integral closure, and
              multiplicity bounds},
      NOTE = {Special volume in honor of Melvin Hochster},
   JOURNAL = {Michigan Math. J.},
  FJOURNAL = {Michigan Mathematical Journal},
    VOLUME = {57},
      YEAR = {2008},
     PAGES = {463--483},
      ISSN = {0026-2285,1945-2365},
   MRCLASS = {13A35 (13B22 13H15)},
  MRNUMBER = {2492463},
MRREVIEWER = {Yukihide\ Takayama},
       DOI = {10.1307/mmj/1220879419},
       URL = {https://doi.org/10.1307/mmj/1220879419},
}
\end{document}